\theoremstyle{plain}
\newtheorem{theo}{Theorem}
\newtheorem{prop}[theo]{Proposition}
\newtheorem{lemma}[theo]{Lemma}
\newtheorem*{notation*}{Notation}
\newtheorem*{notations*}{Notations}
\newtheorem*{claim*}{Claim}
\theoremstyle{definition}
\newtheorem{defin}[theo]{Definition}
\newlength{\espaceavantspecialthm}
\newlength{\espaceapresspecialthm}
\newtheorem{exple}[theo]{Example}
\newtheorem{quest}[theo]{Question}
\newenvironment{remark}[1][]{\refstepcounter{theo} 
\vskip \espaceavantspecialthm \noindent \textsc{Remark~\thetheo
#1.} }%
{\vskip \espaceapresspecialthm}
\newenvironment{Properties}
{\begin{list}{}{
\setlength{\topsep}{6pt}%
\setlength{\itemsep}{4pt}%
\setlength{\labelsep}{0pt}%
\setlength{\leftmargin}{0pt}%
\setlength{\labelwidth}{0pt}%
\setlength{\listparindent}{0pt}}%
\setlength{\parskip}{0pt}%
}
{\end{list}}
\def\bb#1{\mathbb{#1}} \def\m#1{\mathcal{#1}}
\def\del{\partial}
\def\co{\colon\thinspace}
\def\ham{\mathrm{Ham}}
\newcommand{\R}{{\mathbb{R}}}
\newcommand{\Z}{{\mathbb{Z}}}
\newcommand{\N}{{\mathbb{N}}}
\newcommand{\C}{{\mathbb{C}}}
\newcommand{\cA}{{\mathcal{A}}}
\newcommand{\cB}{{\mathcal{B}}}
\newcommand{\cD}{{\mathcal{D}}}
\newcommand{\cF}{{\mathcal{F}}}
\newcommand{\cH}{{\mathcal{H}}}
\newcommand{\cL}{{\mathcal{L}}}
\newcommand{\cM}{{\mathcal{M}}}
\newcommand{\cP}{{\mathcal{P}}}
\newcommand{\cQ}{{\mathcal{Q}}}
\newcommand{\cS}{{\mathcal{S}}}
\newcommand{\cU}{{\mathcal{U}}}
\newcommand{\cV}{{\mathcal{V}}}
\newcommand{\sfA}{{\mathsf{A}}}
\newcommand{\mfo}{{\mathfrak{o}}}
\newcommand{\fc}{{:\ }}
\newcommand{\ol}{\overline}
\newcommand{\wt}{\widetilde}
\newcommand{\wh}{\widehat}
\newcommand{\tb}{\textbf}
\DeclareMathOperator{\Hom}{Hom}
\DeclareMathOperator{\Crit}{Crit}
\DeclareMathOperator{\im}{im}
\DeclareMathOperator{\id}{id}
\DeclareMathOperator{\PSS}{PSS}
\DeclareMathOperator{\lcm}{lcm}
\DeclareMathOperator{\ind}{ind}
\DeclareMathOperator{\Ham}{Ham}
\DeclareMathOperator{\Symp}{Symp}
\DeclareMathOperator{\osc}{osc}
\DeclareMathOperator{\Spec}{Spec}
\DeclareMathOperator{\coker}{coker}
\DeclareMathOperator{\Pin}{Pin}
\DeclareMathOperator{\ddd}{d}
\def\ham{\mathrm{Ham}}
\def\symp#1{\mathrm{Symp}#1}
\begin{document}

\title{Spectral invariants for monotone Lagrangians}
\author{R\'emi Leclercq, Frol Zapolsky}
\date{\today}

\address{RL: Universit\'e Paris-Sud, D\'epartement de Math\'ematiques, Bat. 425, 91400 Orsay, France}
\email{remi.leclercq@math.u-psud.fr}

\address{FZ: Department of Mathematics, University of Haifa, Mount Carmel, Haifa 31905, Israel}
\email{frol.zapolsky@gmail.com}

\subjclass[2010]{Primary 57R17; Secondary 53D12 53D40} 
\keywords{Symplectic manifolds, monotone Lagrangian submanifolds, Lagrangian Floer homology, quantum homology, spectral invariants}

\begin{abstract}
Since spectral invariants were introduced in cotangent bundles via generating functions by Viterbo in the seminal paper \cite{Viterbo_gfqi}, they have been defined in various contexts, mainly via Floer homology theories, and then used in a great variety of applications.
In this paper we extend their definition to monotone Lagrangians, which is so far the most general case for which a ``classical'' Floer theory has been developed. 
Then, we gather and prove the properties satisfied by these invariants, and which are crucial for their applications.
Finally, as a demonstration, we apply these new invariants to symplectic rigidity of some specific monotone Lagrangians.\end{abstract}

\maketitle
\tableofcontents


\section{Introduction and main results}

Spectral invariants were introduced into symplectic topology by Viterbo \cite{Viterbo_gfqi}, and subsequently their theory was developed by Schwarz \cite{Schwarz_Action_sp_aspherical_mfds}, Oh \cite{Oh_Construction_sp_invts_Ham_paths_closed_symp_mfds} in the context of periodic orbit Floer homology, by Oh \cite{Oh_sympl_topology_action_fcnl_I}, Leclercq \cite{Leclercq_spectral_invariants_Lagr_FH}, Monzner--Vichery--Zapolsky \cite{Monzner_Vichery_Zapolsky_partial_qms_qss_cot_bundles} in the context of Lagrangian Floer homology, by Chaperon \cite{Chaperon_On_generating_families}, Bhupal \cite{Bhupal_partial_order_Rn}, Sandon \cite{Sandon_CH_capacity_nonsqueezing_gf}, and Zapolsky \cite{Zapolsky_Geometry_Cont_groups_ct_rigidity} for Legendrian submanifolds of contact manifolds and for contactomorphisms, and by Albers--Frauenfelder \cite{Albers_Frauenfelder_Sp_invts_RFH_global_Ham_pert} in Rabinowitz Floer homology.

They have been used in numerous deep applications, such as metrics on infinite-dimensional groups of symmetries \cite{Viterbo_gfqi, Schwarz_Action_sp_aspherical_mfds, Oh_Sp_invts_analysis_Floer_moduli_space_geom_Ham, Khanevsky_Hofer_metric_diameters, Leclercq_spectral_invariants_Lagr_FH, Sandon_integer_biinvt_metric_Rtwo_n_S_one, Monzner_Zapolsky_Comparison_symp_homogenization_Calabi_QSs, Colin_Sandon_discriminant_osc_lt_ct_Leg_isos, Zapolsky_Geometry_Cont_groups_ct_rigidity, zapolsky,   Seyfaddini_Unboundedness_Lag_Hofer_distance_ball}; the symplectic camel problem \cite{Viterbo_gfqi, theret}; quasi-morphisms on the Hamiltonian group \cite{Entov_Polterovich_Calabi_quasimorphism_quantum_homology, Ostrover_QMs_nonmonotone_symp_mfds, Usher_Deformed_Ham_HF_cap_estimates_Calabi_QMs, FOOO_Sp_invts_bulk_QMs_Lag_HF}; quasi-states and symplectic and contact rigidity \cite{Entov_Polterovich_Quasi_states_symplectic_intersections, Entov_Polterovich_rigid_subsets_sympl_mfds, Monzner_Vichery_Zapolsky_partial_qms_qss_cot_bundles, Zapolsky_Geometry_Cont_groups_ct_rigidity}; orderability and contact nonsqueezing \cite{Sandon_CH_capacity_nonsqueezing_gf, Albers_Merry_Orderability_nonsqueezing_RFH}; $C^0$-symplectic topology \cite{Seyfaddini_C0_limits_Ham_paths_sp_invts, Seyfaddini_Displaced_disk_problem_symp_top, Humiliere_Leclercq_Seyfaddini_Reduction_sympeo, Humiliere_Leclercq_Seyfaddini_Coisotropic_rigidity_C0_symp_geom, HLS12}; function theory on symplectic manifolds \cite{Entov_Polterovich_Zapolsky_qms_Poisson_bracket, Buhovsky_Entov_Polterovich_PB_symp_invts},  \cite{Polterovich_Rosen_Function_thry_symp_mfds} and the references therein; quantum measurements and noise \cite{Polterovich_Quantum_unsharpness_symp_rigidity, Polterovich_Symp_geom_quantum_noise}; surface dynamics \cite{Humiliere_Le_Roux_Seyfaddini_Dyn_interpretation_sp_invts}; and contact dynamics \cite{Zapolsky_Geometry_Cont_groups_ct_rigidity}. There are other applications, and it is not feasible to list all of them here, but the above sample should give the reader a feeling of the power of this wonderful tool of symplectic topology.

Spectral invariants have been defined in various contexts, such as periodic orbit Floer homology and Lagrangian Floer homology for weakly exact Lagrangians. The present paper extends their definition to the setting of Lagrangian Floer homology for monotone Lagrangian submanifolds. This is so far the most general case for which meaningful theory can be developed, while staying in the realm of ``classical'' Floer theory, that is avoiding the more advanced and complicated techniques such as virtual fundamental cycles and Kuranishi structures, used, for instance in \cite{FOOO_Lagr_intersection_Floer_thry_anomaly_obstr_I, FOOO_Lagr_intersection_Floer_thry_anomaly_obstr_II, FOOO_Sp_invts_bulk_QMs_Lag_HF}.

Future applications of spectral invariants for monotone Lagrangians include \emph{inter alia} new results on the Lagrangian Hofer metric \cite{Khanevsky_Zapolsky_Lagrangian_Hofer_metric}.

\subsection{Main result}\label{subsec:main_result}

Let us briefly review the setting in which we will be working. Throughout this paper we fix a closed\footnote{In this paper we limit ourselves to closed symplectic manifolds, however, using techniques developed by Frauenfelder--Schlenk \cite{frauenfelder-schlenk} and Lanzat \cite{Lanzat_QMs_QSs_convex_symp_mfds}, it is straightforward to generalize to the case of manifolds which are convex at infinity \cite{Eliashberg_Gromov_Convex_symp_mfds}.} connected symplectic manifold $(M,\omega)$ of dimension $2n$ and a closed connected Lagrangian submanifold $L \subset M$. We have the natural homomorphisms
$$\omega \fc \pi_2(M,L) \to \R\,, \qquad \text{ the \tb{symplectic area}, and}$$
$$\mu \fc \pi_2(M,L) \to \Z\,, \qquad \text{ the \tb{Maslov index}.}$$
We say that $L$ is \tb{monotone} if there is a positive constant $\tau$ such that
$$\omega(A) = \tau\mu(A)\quad \text{for every }A \in \pi_2(M,L)\,.$$
The \tb{minimal Maslov number} $N_L$ of $L$ is defined to be the positive generator of the subgroup $\mu\big(\pi_2(M,L)\big) \subset \Z$ if it is nontrivial, otherwise we set $N_L = \infty$. In this paper we assume that $L$ is \emph{monotone of minimal Maslov number at least two}. Usually we will assume that $N_L$ is finite, which then implies that the group of periods $\omega\big(\pi_2(M,L)\big) \subset \R$ is infinite cyclic; in this case we let $\sfA = \tau N_L\in \R_{>0}$ be its positive generator. We will leave to the reader the task of adapting the results presented in this paper to the case $N_L = \infty$, which is the case of weakly exact Lagrangians.

We proceed to formulate the main result of this paper. A few preliminaries are in order. Fix a ground ring $R$. If its characteristic is different from $2$, we require that the following assumption be satisfied.

\bigskip

\tb{Assumption (O):} The second Stiefel--Whitney class $w_2(TL)$ of $L$ vanishes on the image of the boundary homomorphism $\pi_3(M,L) \to \pi_2(L)$.

\bigskip

\begin{remark}\label{rem:assmpt_O}
Assumption \tb{(O)} is satisfied if $L$ is relatively $\Pin^\pm$, that is either one of the classes $w_2(TL)$ or $w_2(TL) + w_1^2(TL)$ is in the image of the restriction morphism $H^2(M;\Z_2) \to H^2(L;\Z_2)$. However it is a strictly weaker assumption, see \S \ref{subsec:discussion}.
\end{remark}

In \cite{Biran_Cornea_Lagr_QH, Biran_Cornea_Lagr_top_enumerative_invts, Zapolsky_Canonical_ors_HF} it is shown how to define the \tb{quantum homology} $QH_*(L)$ of $L$ over $R$. 
This is an algebra over $R$, where the product operation is
$$\star \fc QH_j(L) \otimes QH_k(L) \to QH_{j+k-n}(L)\,,$$
with a unit which is given by the fundamental class of $L$, $[L] \in QH_n(L)$. It follows that $QH_*(L) \neq 0$ if and only if $[L] \neq 0$. \emph{We assume that this is the case throughout the text.}

In \S \ref{sec:definition} below we define the \tb{Lagrangian spectral invariant} associated to $L$, which is a function
$$\ell \fc QH_*(L) \times C^0\big(M \times [0,1]\big) \to \R \cup \{-\infty\}\,.$$
We use the abbreviation
\begin{equation}\label{eq:ell_plus}
\ell_+ = \ell([L];\cdot)\,.
\end{equation}
For a smooth Hamiltonian $H$ on $M$ we let $\Spec(H:L) \subset \R$ be its action spectrum relative to $L$, that is the set of critical values of the associated action functional, see the definitions in \S \ref{sec:background}.

Given a continuous function $H \fc M \times [0,1] \rightarrow \bb R$, we denote by $\overline H$ the function defined by $\overline H_t(x) = -H_{1-t}(x)$. In case $H$ is smooth, we have $\phi_{\overline H}^t = \phi_H^{1-t}\phi_H^{-1}$ for all $t$. If $H,K \fc M \times [0,1] \to \R$ are continuous and $H_t = K_t = 0$ for $t$ close to $0,1$, we define $H \sharp K \fc M \times [0,2] \to \R$ to be their concatenation with respect to the time variable.\footnote{The definition of spectral invariants can be generalized in a straightforward manner to Hamiltonians parametrized by a time interval which is not necessarily $[0,1]$.}

Our first result concerns the main properties of the spectral invariant.

\begin{theo}\label{thm:main_properties_Intro}
Let $L$ be a closed connected monotone Lagrangian submanifold of $(M,\omega)$ with minimal Maslov number $N_L \geq 2$. The spectral invariant
$$\ell \fc QH_*(L) \times C^0\big(M\times [0,1]\big) \rightarrow \bb R \cup \{ -\infty \}$$
satisfies the following properties. 
\begin{Properties} 
   \item[Finiteness] $\ell(\alpha;H) = - \infty$ if and only if $\alpha = 0$.
   \item[Spectrality] For $H \in C^\infty\big(M \times [0,1]\big)$ and $\alpha \neq 0$, $\ell(\alpha;H) \in \Spec(H:L)$.
   \item[Ground ring action] For $r \in R$, $\ell(r \cdot \alpha;H) \leq \ell(\alpha;H)$. In particular, if $r$ is invertible, then $\ell(r\cdot \alpha;H) = \ell(\alpha;H)$.
   \item[Normalization] If $c$ is a function of time then $$\ell(\alpha;H+c)=\ell(\alpha;H) + \int_0^1 c(t) \,dt\,.$$ We have $\ell_+(0)=0$.
   \item[Continuity] For any Hamiltonians $H$, $K$, and $\alpha \neq 0$:
    $$\int_0^1 \min_M (K_t - H_t) \,dt \leq \ell(\alpha;K) - \ell(\alpha;H) \leq \int_0^1 \max_M (K_t - H_t) \,dt \,.$$ 
   \item[Monotonicity] If $H \leq K$, then $\ell(\alpha;H) \leq \ell(\alpha;K)$.
   \item[Triangle inequality] For all $\alpha$ and $\beta$:
   $$\ell(\alpha \star \beta; H \sharp K) \leq \ell(\beta;H) + \ell(\alpha;K)\,.$$

   \item[Lagrangian control] If for all $t$, $H_t|_L = c(t) \in \bb R$ (respectively $\leq$, $\geq$), then
   $$\ell_+(H)=\int_0^1 c(t) \,dt\quad\text{(respectively } \leq, \geq)\,.$$
   Thus for all $H$:
   $$\int_0^1 \min_L H_t \,dt \leq \ell_+(H) \leq \int_0^1 \max_L H_t \,dt \,.$$
   \item[Non-negativity] $\ell_+(H) + \ell_+(\overline{H}) \geq 0$.
\end{Properties}
\end{theo}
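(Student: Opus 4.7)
My strategy is to define $\ell(\alpha;H)$ for smooth nondegenerate $H$ as the min-max
$$\ell(\alpha;H)=\inf\bigl\{a\in\R : \alpha\in\im\bigl(HF^{<a}(H:L)\to HF(H:L)\bigr)\bigr\}\,,$$
with the target identified with $QH_*(L)$ via PSS, and then extend to all smooth $H$ by density of nondegenerate Hamiltonians and to continuous $H$ via $C^0$-continuity. The properties then split into three natural groups, which I would prove in the order below.

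The formal properties come essentially for free from filtered Floer theory. Finiteness follows because for nondegenerate smooth $H$ the Floer complex is finitely generated, so the filtration is bounded below on every nonzero class while the zero class is represented at every level. Spectrality reduces to lower semicontinuity of $\ell(\alpha;\cdot)$ together with the fact that $\Spec(H:L)$ is nowhere dense in $\R$, via a standard ``trapping'' argument. The Ground ring action is immediate from linearity of the filtration, and Normalization reflects the uniform shift of the action functional by $\int_0^1 c(t)\,dt$ under $H\mapsto H+c(t)$; the value $\ell_+(0)=0$ follows because a suitable perturbation of $H=0$ represents $[L]$ at filtration $0$. The classical energy estimate for Floer continuation maps between $H$ and $K$ yields the two-sided Continuity inequality, from which Monotonicity is immediate.

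The Triangle inequality is the main analytic ingredient. I would construct a product $\ast\co HF(H:L)\otimes HF(K:L)\to HF(H\sharp K:L)$ from moduli spaces of discs with three boundary punctures and Lagrangian boundary on $L$ carrying the appropriate Hamiltonian perturbations, establish the standard filtration estimate, and identify $\ast$ with the quantum product $\star$ on $QH_*(L)$ via PSS. The main obstacle is making all of this operate coherently over a general ground ring $R$: this is precisely where Assumption \textbf{(O)} enters, guaranteeing coherent orientations on the relevant moduli spaces, while monotonicity of $L$ together with $N_L\geq 2$ controls disc bubbling. Once the product is set up at the chain level with compatible filtrations, the inequality is formal.

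For Lagrangian control in the equality case, observe that if $H_t|_L=c(t)$ then $dH_t$ vanishes along $TL$, so $X_{H_t}|_L$ is tangent to $L$ and $L$ is invariant under $\phi_H^t$; the resulting orbits on $L$ carry action exactly $\int_0^1 c(t)\,dt$, and a representative of $[L]$ supported on $L$ realizes this filtration level. The inequality cases follow by dominating $H$ by a Hamiltonian equal to $c(t)$ on $L$ and invoking Monotonicity. Finally, Non-negativity is obtained by combining the Triangle inequality applied to $[L]\star[L]=[L]$,
$$\ell_+(H\sharp\overline H)\leq\ell_+(H)+\ell_+(\overline H)\,,$$
with the bound $\ell_+(H\sharp\overline H)\geq 0$. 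The latter I would prove by noting that $H\sharp\overline H$ generates a contractible loop at the identity, homotoping it to the zero Hamiltonian through a family of paths in $\Ham$ all ending at the identity, and invoking homotopy invariance of $\ell_+$ along such families together with $\ell_+(0)=0$; here one must take care to track the contributions of $\omega(\pi_2(M,L))=\sfA\Z$, using monotonicity of $L$ to pin down the correct branch.
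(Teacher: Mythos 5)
Your overall architecture matches the paper's: a filtered min-max definition for regular Floer data, $C^0$-extension via the continuity estimate for continuation maps, and a triangle inequality from the pair-of-pants product with a filtration estimate identified with $\star$ via PSS. But there are two genuine gaps, both in the places you treat as ``formal.''

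First, $\ell_+(0)=0$. Representing $[L]$ by the maximum of a Morse function on $L$ gives only $\ell_+(0)\leq 0$. In the monotone setting it is \emph{not} automatic that $[L]$ cannot be represented at a strictly negative filtration level: a priori a chain representing $[L]$ could include quantum terms of negative $-\omega(A)$, and without further input the infimum in the definition of the valuation $\nu([L])$ could be negative. The paper closes this using the triangle inequality applied to $[L]\star[L]=[L]$, which forces $\ell([L];0)\geq 0$. (This is not circular, since the triangle inequality is proved independently, but the dependency must be recognized.) Note also that your finiteness remark, ``the Floer complex is finitely generated,'' is false as stated --- $CF_*(H:L)$ is infinitely generated over all cappings; what monotonicity gives is finiteness in each fixed degree/Conley--Zehnder index, and that is what makes $\ell$ finite for homogeneous classes.

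Second, Lagrangian control. When $H_t|_L=c(t)$ the Hamiltonian is degenerate (the whole of $L$ is invariant), so one cannot directly produce a nondegenerate chain-level representative of $[L]$ on $L$; and even granting such a representative one would obtain only $\ell_+(H)\leq\int_0^1 c\,dt$, with the lower bound again unaddressed. The paper's route is different and cleaner: consider $H^s=sH$ and observe that $\Spec(H^s:L)=\{s\int_0^1 c\,dt+k\sfA\,|\,k\in\Z\}$; by \textsc{Spectrality} plus \textsc{Continuity} the integer $k_0$ in $\ell_+(H^s)=s\int_0^1 c\,dt+k_0\sfA$ is locally constant, hence constant, and is pinned down at $s=0$ by $\ell_+(0)=0$. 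Thus both halves of the equality reduce to the normalization step. Your inequality cases and the non-negativity argument (triangle inequality with $[L]\star[L]=[L]$, plus homotopy invariance of $\ell_+$ for normalized Hamiltonians, Proposition~\ref{prop:sp_invts_htpy_class_path_Intro}) are in agreement with the paper, though for the latter you should say explicitly that one first reduces to normalized $H$ using \textsc{Normalization}, since the homotopy invariance of $\Spec(H:L)$ proved in Lemma~\ref{lem:invariance-of-spec} requires it.
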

\noindent Theorem \ref{thm:main_properties_Intro} is proved in \S \ref{sec:main-properties} below as part of the more general Theorem \ref{thm:main_properties_Lagr_sp_invts}.

We say that $H$ is \tb{normalized} if $\int_M H_t\,\omega^n = 0$ for all $t$. We have the following, by now standard, observation.
\begin{prop}\label{prop:sp_invts_htpy_class_path_Intro}
If $H$ is normalized, then for $\alpha \in QH_*(L) \setminus \{0\}$ the spectral invariant $\ell(\alpha;H)$ only depends on the homotopy class of the path $\{\phi_H^t\}_{t \in [0,1]}$ relative to the endpoints.
\end{prop}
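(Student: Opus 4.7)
The plan is to use the Continuity and Spectrality properties of $\ell$ together with the classical fact that the action spectrum is invariant, among normalized Hamiltonians, under homotopy of Hamiltonian paths rel endpoints.

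Given two normalized smooth Hamiltonians $H^0, H^1$ generating paths $\{\phi^t_{H^0}\}_{t\in[0,1]}$ and $\{\phi^t_{H^1}\}_{t\in[0,1]}$ that are homotopic rel endpoints, I would first construct from a chosen homotopy of paths a smooth $1$-parameter family $\{H^s\}_{s\in[0,1]}$ of normalized Hamiltonians interpolating between them, such that for each $s$ the path $\{\phi^t_{H^s}\}_{t\in[0,1]}$ represents the same element of $\hamt(M)$; in particular $\phi^1_{H^s}$ is independent of $s$. This is done by taking $H^s$ to be the Hamiltonian generating the chosen family of paths and normalizing by an $s$- and $t$-dependent constant. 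Set $f(s):=\ell(\alpha;H^s)$. By the Continuity property of Theorem~\ref{thm:main_properties_Intro}, $f$ is continuous on $[0,1]$, and by the Spectrality property, $f(s)\in\Spec(H^s:L)$ for every $s$.

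The key technical input is the assertion that $\Spec(H^s:L)$ is independent of $s$ as a subset of $\R$. The geometric chord set $\{p\in L \,:\, \phi^1_{H^s}(p)\in L\}$ is manifestly $s$-independent, and any capped chord of $H^0$ admits a natural transport along the family to a capped chord of $H^s$ using the rectangle traced in $M$ by the chord under the chosen homotopy of paths. A direct computation, going back to Schwarz and Oh in the periodic orbit setting and adapting without change to the Lagrangian situation, expresses the action difference $\cA_{H^s}(x^s,\bar u^s) - \cA_{H^0}(x^0,\bar u^0)$ along such a transport as an integral involving $\partial_s H^s_t$, which lies in the group of periods $\sfA\Z$ once the normalization of $H^s$ is combined with the fixed-endpoint property of the homotopy. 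Since $\Spec(H^s:L)$ is itself a union of cosets of $\sfA\Z$, this yields the set equality $\Spec(H^s:L)=\Spec(H^0:L)$ for all $s$.

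The proof is then completed by a standard topological observation: the continuous function $f$ on the connected interval $[0,1]$ takes values in $\Spec(H^0:L)$, which is a countable subset of $\R$ (the chord set is locally finite and cappings range over the countable group $\pi_2(M,L)$). Any countable subset of $\R$ is totally disconnected, so the image of $f$ is a singleton, hence $f$ is constant and $\ell(\alpha;H^0)=\ell(\alpha;H^1)$. The main obstacle---the only step requiring work beyond invoking the properties of $\ell$ already established in Theorem~\ref{thm:main_properties_Intro}---is thus the $s$-invariance of the action spectrum for normalized Hamiltonians with paths in a fixed homotopy class.
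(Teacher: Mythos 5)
Your outline follows the same strategy as the paper: transport capped chords along the homotopy $(H^s)_s$ to show the action spectrum $\Spec(H^s:L)$ is $s$-independent, then combine \textsc{Spectrality} and \textsc{Continuity} with a topological property of the spectrum to conclude that $s\mapsto\ell(\alpha;H^s)$ is constant. However, two steps in your argument are asserted rather than established, and one of them masks the genuinely nontrivial part of the proof.

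The main gap is in the invariance of the spectrum. You express the action difference along the transport as $\int_{I^2} (\partial_s H^s_t)\circ u\,ds\,dt$ and then claim it ``lies in the group of periods $\sfA\Z$ once the normalization of $H^s$ is combined with the fixed-endpoint property of the homotopy,'' without saying why. The paper's proof (Lemma~\ref{lem:invariance-of-spec}) shows something stronger and by a non-obvious route: it introduces the auxiliary loop Hamiltonian $K^s = H^s\sharp\ol{H^0}$ and observes that the integral in question equals $\cA_{K^1}(\wt\delta_1^p)$ for a certain critical point $\wt\delta_1^p$ of $\cA_{K^1}$; since $p\mapsto\wt\delta_1^p$ is a smooth embedding of $M$ into $\Crit\cA_{K^1}$ and the action functional is constant on connected components of its critical set, the integral is independent of $p$. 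Only then does averaging over $M$ with the normalization condition force the integral to vanish identically. Your version skips the $p$-independence argument entirely, and neither normalization alone nor the fixed-endpoint condition alone yields containment in $\sfA\Z$ for a single chord; some version of Step~2 is indispensable.

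The second issue is your appeal to countability of $\Spec(H^s:L)$ via local finiteness of the chord set. The family $(H^s)_s$ may pass through degenerate Hamiltonians, for which the chord set is not locally finite (for the zero Hamiltonian every point of $L$ is a constant chord). What holds for every smooth $H$, and what the paper actually uses, is that $\Spec(H:L)$ is closed and nowhere dense (Lemma~\ref{lem:spectrum_nowheredense}). This already gives total disconnectedness of the spectrum in $\R$, which is all that is needed; your conclusion is then correct, but the route through countability is not justified.
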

\noindent See \S \ref{sec:invariance-ell2} for a proof. Thus the spectral invariant descends to a function
$$\ell \fc QH_*(L) \times \wt \Ham(M,\omega) \to \R \cup \{-\infty\}\,.$$
This function satisfies a number of properties, analogous to those formulated in Theorem \ref{thm:main_properties_Intro}. See \S \ref{sec:SI-isotopies} for  details.

\subsection{Overview of the construction of spectral invariants}

We present here a brief sketch of the construction of $\ell$. The reader is referred to \S \ref{sec:definition} for full details.

Assume that $H \fc M \times [0,1] \to \R$ is such that $\phi_1(L)$ intersects $L$ transversely. It is then possible to choose a time-dependent family of $\omega$-compatible almost complex structures $J$ on $M$ for which the Floer complex
$$\big(CF_*(H:L),\partial_{H,J} \big)$$
is defined. This complex is filtered by the action functional of $H$, and we let
$$CF_*^a(H:L) \subset CF_*(H:L)$$
be the submodule generated by elements whose action is $< a$. From the definition of the Floer boundary operator $\partial_{H,J}$ it follows that this submodule is in fact a subcomplex. We let
$$HF_*(H,J:L) \quad \text{and} \quad HF_*^a(H,J:L)$$
be the homologies of $\big(CF_*(H:L),\partial_{H,J} \big)$ and $\big ( CF_*^a(H:L),\partial_{H,J} \big)$, respectively, and we let
$$i_*^a \fc HF_*^a(H,J:L) \to HF_*(H,J:L)$$
be the morphism induced on homology by the inclusion.

There is a canonical map, the so-called PSS isomorphism
$$\PSS^{H,J} \fc QH_*(L) \to HF_*(H,J:L)\,.$$
For $\alpha \in QH_*(L)$ we then define
$$\ell(\alpha;H) = \inf \{a \in \R \,|\, \PSS^{H,J}(\alpha) \in \im i_*^a\}\,.$$
It is then shown that $\ell$ is continuous in $H$ in the sense of Theorem \ref{thm:main_properties_Intro}, which implies that is possesses a unique extension to the set of continuous Hamiltonians.

\subsection{Additional properties of spectral invariants}

Theorem \ref{thm:main_properties_Intro} collects the more basic properties of the Lagrangian spectral invariants. The latter satisfy a number of other properties, of which we mention three in this introduction. For the full list the reader is referred to Theorem \ref{thm:main_properties_Lagr_sp_invts} in \S \ref{sec:main-properties}.

The quantum homology of $M$ acts on $QH_*(L)$ via the so-called quantum module action \cite{Biran_Cornea_Lagr_QH, Zapolsky_Canonical_ors_HF}
$$\bullet\fc QH_j(M) \otimes QH_k(L) \to QH_{j+k-2n}(L)\,.$$
We have the corresponding \textsc{Module structure} property, where
$$c \fc QH_*(M) \times C^0(M\times S^1) \to \R$$
is the Hamiltonian spectral invariant \cite{Oh_Construction_sp_invts_Ham_paths_closed_symp_mfds}, see also \S \ref{sec:non-degen-hamilt}:
\begin{prop}\label{prop:module_struct_Intro}
Let $H^1$, $H^2$ be Hamiltonians, with $H^2$ being time-periodic. Then for $a \in QH_*(M)$ and $\alpha \in QH_*(L)$ we have
$$\ell(a\bullet \alpha; H^1 \sharp H^2) \leq c(a;H^2) + \ell(\alpha;H^1)\,.$$
\end{prop}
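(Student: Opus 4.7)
The plan is to build a chain-level version of the quantum module action and show it is action-decreasing, using a standard pair-of-pants-type construction mixing a closed Hamiltonian orbit input with a Lagrangian Hamiltonian chord input. Concretely, pick $\varepsilon>0$ and choose cycles representing $a\in QH_*(M)$ and $\alpha\in QH_*(L)$ so that their PSS images in $HF_*(H^2)$ and $HF_*(H^1:L)$ respectively have representatives whose actions do not exceed $c(a;H^2)+\varepsilon$ and $\ell(\alpha;H^1)+\varepsilon$. The goal is to produce a chain-level map
\[
\mu\co CF_*(H^2)\otimes CF_*(H^1:L)\longrightarrow CF_*(H^1\sharp H^2:L)
\]
such that $\mu$ descends to the quantum module action $a\otimes \alpha\mapsto a\bullet\alpha$ after composition with the appropriate PSS isomorphisms, and such that the action of $\mu(x\otimes y)$ is bounded by the sum of the actions of $x$ and $y$.

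To construct $\mu$ I would use the moduli space of Floer strips $u\co \R\times[0,1]\to M$ with Lagrangian boundary conditions $u(s,0),u(s,1)\in L$, with a Floer-type equation perturbed by $H^1$ on the right half $s\geq 0$ and by $H^1\sharp H^2$ on the left half $s\le -R$ (for some $R$), equipped with an additional interior marked point at which one inserts a closed orbit of $H^2$ via a cylindrical end. The geometric picture is a half-plane with a circular puncture: Lagrangian boundary along the real line, one chord input at $+\infty$ and one chord output at $-\infty$ (both for Lagrangian Floer theory with the appropriate Hamiltonians), and a closed orbit input at the interior puncture. A careful choice of a sub-closed one-form controlling the Hamiltonian perturbation yields the fundamental energy identity
\[
E(u)=\cA_{H^1\sharp H^2:L}(\gamma_-)-\cA_{H^1:L}(\gamma_+)-\cA_{H^2}(x)+\text{(bubbling defect)},
\]
and since $E(u)\ge 0$ on an honest Floer solution, this gives the action estimate
\[
\cA_{H^1\sharp H^2:L}(\gamma_-)\le \cA_{H^2}(x)+\cA_{H^1:L}(\gamma_+).
\]

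Once $\mu$ is defined, routine continuation arguments, gluing, and homotopies of the perturbation data show that on homology $\mu$ becomes the quantum module action after conjugation by PSS (using the compatibility of PSS with the module and cap-product structures, already established in \cite{Biran_Cornea_Lagr_QH, Zapolsky_Canonical_ors_HF}). Combining this with the action estimate, the image of $\text{PSS}(a)\otimes \text{PSS}(\alpha)$ under $\mu$ represents $\text{PSS}(a\bullet\alpha)$ and lies in the submodule $CF^{c(a;H^2)+\ell(\alpha;H^1)+2\varepsilon}_*(H^1\sharp H^2:L)$, giving
\[
\ell(a\bullet\alpha;H^1\sharp H^2)\le c(a;H^2)+\ell(\alpha;H^1)+2\varepsilon,
\]
and letting $\varepsilon\to 0$ finishes the nondegenerate smooth case; the $C^0$ case follows by continuity.

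The main obstacle is the monotone setting itself: one must control disc and sphere bubbling in the moduli space used to define $\mu$ and in the homotopies proving compatibility with PSS. With $N_L\ge 2$ and Assumption \textbf{(O)} guaranteeing coherent orientations, the standard pearl/cluster-type arguments of \cite{Biran_Cornea_Lagr_QH, Zapolsky_Canonical_ors_HF} apply: bubbled configurations occur in codimension at least one and cancel algebraically after the signed count, so $\mu$ is a well-defined chain map and the homotopies between competing definitions exist. The rest is bookkeeping of actions, whose sign convention has already been fixed so that $H^1\sharp H^2$ corresponds to the concatenation of paths and therefore to the composition $\phi^1_{H^2}\circ\phi^1_{H^1}$, matching the inequality as stated.
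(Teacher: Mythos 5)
Your proposal is essentially the same argument as the paper's: both construct the chain-level module operation on the Floer complexes using the punctured surface with two boundary punctures (chord input with $H^1$, chord output with $H^1\sharp H^2$) and one interior puncture (closed orbit input with $H^2$), choose the perturbation datum so that the curvature/derivative term in the energy identity is bounded by an arbitrary $\varepsilon$, read off the action estimate from $E(u)\ge 0$, and transport the result through the PSS isomorphisms using the compatibility established in \cite{Biran_Cornea_Lagr_QH,Zapolsky_Canonical_ors_HF}. The paper simply packages this by reusing its explicit slit-strip domain $\Upsilon_\bullet=\Upsilon_\star$ from the \textsc{Triangle inequality} proof, mapping it onto $\Sigma_\bullet$ and extending the datum by continuity; your half-plane-with-circular-puncture model is conformally equivalent and leads to the same estimate.

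One small terminological slip: the extra term in your energy identity is not a ``bubbling defect'' but the curvature term of the perturbation one-form (the $\int_\Sigma u^*R_K$ term), which the sub-closed / small-$\partial_s K$ condition controls; bubbling only affects compactness of the moduli space, not the energy identity of an honest Floer solution. The inequality still comes out right because that curvature contribution is what is made arbitrarily small, matching the paper's use of $|\partial_s K(\partial_t)|\le\varepsilon$.
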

\noindent This is proved as part of the properties of spectral invariants, Theorem \ref{thm:main_properties_Lagr_sp_invts}.

Next we show that Lagrangian spectral invariants generalize Hamiltonian spectral invariants. Let $\Delta \subset M \times M$ be the Lagrangian diagonal where the symplectic structure is $\omega \oplus (-\omega)$. In the following theorem $QH_*(\Delta)$ denotes the Lagrangian quantum homology of $\Delta$.
\begin{theo}\label{thm:spectral_invts_diagonal_Intro}
There is a canonical algebra isomorphism $QH_*(\Delta) = QH_*(M)$. For any class $\alpha \in QH_*(\Delta) = QH_*(M)$ and any time-periodic Hamiltonian $H$ on $M$ we have
$$c(\alpha;H) = \ell(\alpha;H \oplus 0)\,.$$
\end{theo}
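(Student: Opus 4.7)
The strategy is to first establish the algebra isomorphism $QH_*(\Delta) \cong QH_*(M)$, then identify the filtered Floer complexes computing $HF_*^a(H \oplus 0 : \Delta)$ and $HF_*^a(H)$ via the natural bijection between Lagrangian chords on $\Delta$ and Hamiltonian orbits on $M$, and finally check that the two PSS maps agree under these identifications, so that the spectral invariants, which are defined by the same formula, automatically coincide.

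For the algebra isomorphism, I would work with a split almost complex structure $\mathbf{J} = J \oplus (-J)$ compatible with $\omega \oplus (-\omega)$ in a neighborhood of $\Delta$. Pearly trajectories contributing to the Lagrangian quantum product on $\Delta$ decompose into pairs of $J$-holomorphic maps into $M$ that satisfy conjugate Cauchy--Riemann equations and coincide along their common boundary; a reflection and doubling glues each such pair to a single $J$-holomorphic sphere in $M$. This yields a chain-level identification of the pearl complex of $\Delta$ with the Morse--Novikov complex computing $QH_*(M)$, compatible with the two quantum products. The relevant Novikov rings agree because the minimal Maslov number of $\Delta$ equals twice the minimal Chern number of $M$.

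The key identification at the Floer level comes from the fact that the Hamiltonian flow of $H \oplus 0$ is $\phi_H^t \times \id$, so a Hamiltonian chord $\eta \colon [0,1] \to M \times M$ with $\eta(0), \eta(1) \in \Delta$ is exactly of the form $\eta_\gamma(t) = (\gamma(t), \gamma(0))$ for $\gamma$ a $1$-periodic orbit of $H$. This bijection preserves the action and matches the Maslov index of $\eta_\gamma$ with the Conley--Zehnder index of $\gamma$ up to a constant shift. For the split $\mathbf{J}$, a Floer strip $(u_1, u_2)$ with boundary on $\Delta$ for $H \oplus 0$ consists of a Floer trajectory $u_1$ for $(H, J)$ and a $J$-holomorphic strip $u_2$ that coincides with $u_1$ along $t = 0, 1$. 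Doubling $u_2$ across the $t = 1$ boundary and gluing it to $u_1$ produces a Floer cylinder for $H$, and after the obvious reparametrization this yields a bijection between Floer strips on one side and Floer cylinders on the other, hence a canonical filtered chain isomorphism $CF_*^a(H \oplus 0 : \Delta) \cong CF_*^a(H)$ for every $a$.

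The PSS moduli spaces on both sides admit the same split decomposition under $\mathbf{J}$ and are matched by the same doubling construction, so $\PSS^{H \oplus 0, \mathbf{J}}$ corresponds to $\PSS^{H, J}$ under the algebra isomorphism of the first step. The asserted equality $c(\alpha; H) = \ell(\alpha; H \oplus 0)$ then follows directly from the common definition of the two spectral invariants as the infimum of filtration levels at which the respective PSS class is represented. The main obstacle is the moduli-theoretic bookkeeping: one must ensure that signs and orientations match, that cappings of chords on $\Delta$ pass correctly to cappings of orbits in $M$, and that the correspondence between minimal Maslov disks on $\Delta$ and minimal Chern spheres of $M$ is compatible with the way Novikov variables enter on either side. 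These are all standard aspects of the doubling/reflection trick for the diagonal, but they require careful attention.
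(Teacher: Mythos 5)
Your proposal is correct and follows essentially the same doubling/folding approach as the paper: identify $\Delta$-chords of $H\oplus 0$ with periodic orbits of $H$, unfold Floer strips on $\Delta$ into Floer cylinders on $M$, and match filtrations and PSS maps. The paper packages this slightly differently, working with a ``folded'' Floer datum $(\wh H,\wh J)$ on the time interval $[0,\tfrac12]$ (with $\wh J_t=J_t\oplus(-J_{1-t})$, which makes the unfolded cylinder data smooth by construction) and then noting that $\wh H$ agrees with $H\oplus 0$ up to time-reparametrization once one assumes $H_t\equiv 0$ for $t\in[\tfrac12,1]$; your direct unfolding over $[0,1]$ is equivalent after the same reparametrization invariance, though you should take care that the second component of the strip is $J$-antiholomorphic and that the reflected almost complex structure is only piecewise smooth unless you build in the time-reversal as the paper does.
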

\noindent See \S \ref{subsubsec:equiv_Ham_sp_invt_Lagr_diagonal} for a proof.

The last property we wish to mention is a relation between the spectral invariants of two monotone Lagrangians $L_i \subset (M_i, \omega_i)$ such that the product $L_1 \times L_2$ is monotone of minimal Maslov at least two. In this case both $L_1$, $L_2$ have minimal Maslov number at least $2$. In \S \ref{subsubsec:sp_invts_products} below we prove the following property:
$$\ell(\alpha_1 \otimes \alpha_2;H^1 \oplus H^2) \leq \ell(\alpha_1;H^1) + \ell(\alpha_2;H^2)$$
where $\alpha_i \in QH_*(L_i)$, $H^i$ is a Hamiltonian on $M_i$, and $\alpha_1 \otimes \alpha_2 \in QH_*(L_1 \times L_2)$ denotes the element canonically associated to $\alpha_1$, $\alpha_2$. In case the ground ring $R$ is a field, we have an equality.

  \subsection{Hofer bounds}
  \label{sec:Hofer-bound}

We now explain a relation between Lagrangian spectral invariants associated to two Lagrangian submanifolds $L$, $L' = \varphi(L)$ for some $\varphi \in \Ham(M,\omega)$, and the Hofer norm of $\varphi$, as well as the Lagrangian Hofer distance between $L$ and $L'$. 

First, as we shall see in Theorem \ref{thm:main_properties_Lagr_sp_invts}, Lagrangian spectral invariants satisfy a \textsc{Symplectic invariance} property, expressed as follows. Let $\psi \in \symp(M,\omega)$ and denote by $\ell'$ the spectral invariant associated to $L'=\psi(L)$. Then for any $\alpha' \in QH_*(L')$ and any Hamiltonian $H$ we have 
$$\ell'(\alpha' ; H) = \ell(\psi^{-1}_*(\alpha') ; H \circ \psi)\,.$$
For a fixed $\alpha' \in QH_*(L')$, this property allows us to compare $\ell'(\alpha'; \,\cdot\,)$ and $ \ell(\psi^{-1}_*(\alpha') ; \,\cdot\,)$ as functions defined on $C^0(M \times [0,1])$ or on $\wt\Ham(M,\omega)$.

The following proposition states that when $\psi$ is Hamiltonian, the difference between these two functions is bounded from above by the \tb{Hofer norm} of $\psi$, defined as
\begin{align*}
  \| \psi \|  = \inf \big\{ \textstyle \int_0^1 \osc_M(H_t) \,dt \, | \, H \co M \times [0,1] \rightarrow \bb R \text{ with } \phi_H = \psi  \big\} \,.
\end{align*}

\begin{prop}\label{theo:lagrangian-nature}
    Let $L$ and $L'$ be Hamiltonian isotopic Lagrangians. Let $\alpha \in QH_*(L)$. For all $\varphi \in \ham(M,\omega)$ such that $\varphi (L)=L'$ we have
    \begin{align*}  
      | \ell (\alpha ; H) - \ell'(\varphi_*(\alpha) ; H) | \leq  \| \varphi \| \quad \text{and} \quad | \ell (\alpha ; \wt\chi) - \ell'(\varphi_*(\alpha) ; \wt\chi) | \leq  \| \varphi \|\,,
    \end{align*}
where $H \in C^0(M \times [0,1])$ and $\wt\chi \in \wt\Ham(M,\omega)$ are arbitrary elements.
\end{prop}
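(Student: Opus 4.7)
The plan is to combine the Symplectic Invariance from Theorem \ref{thm:main_properties_Lagr_sp_invts} with the Triangle Inequality and Lagrangian Control, using the homotopy invariance of Proposition \ref{prop:sp_invts_htpy_class_path_Intro} to pass between conjugate Hamiltonian paths. Symplectic Invariance applied to $\psi = \varphi$ and $\alpha' = \varphi_*\alpha$ reduces the statement to a comparison within the theory of $L$ alone:
$$\ell'(\varphi_*\alpha; H) - \ell(\alpha; H) = \ell(\alpha; H \circ \varphi) - \ell(\alpha; H),$$
so it suffices to prove $|\ell(\alpha; H\circ\varphi) - \ell(\alpha; H)| \leq \|\varphi\|$.

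Assume $H$ normalized (the general case reduces via the Normalization property) and fix any normalized Hamiltonian $F$ with $\phi_F^1 = \varphi$. The flow of $H \circ \varphi$ is the conjugation path $t \mapsto \varphi^{-1}\phi_H^t\varphi$, whereas the concatenation $F \sharp H \sharp \overline{F}$ generates the triangular path $\id \to \varphi \to \phi_H\varphi \to \varphi^{-1}\phi_H\varphi$. Both end at the same point, and I claim they represent the same class in $\wt\Ham(M,\omega)$; granting this, Proposition \ref{prop:sp_invts_htpy_class_path_Intro} yields
$$\ell(\alpha; H \circ \varphi) = \ell(\alpha; F \sharp H \sharp \overline{F}).$$
Two applications of the Triangle Inequality, using that $[L]$ is a unit so that $[L] \star \alpha \star [L] = \alpha$, then give
$$\ell(\alpha; F \sharp H \sharp \overline{F}) \leq \ell_+(F) + \ell(\alpha; H) + \ell_+(\overline{F}).$$
By Lagrangian Control, $\ell_+(F) \leq \int_0^1 \max_L F_t\,dt$ and $\ell_+(\overline{F}) \leq -\int_0^1 \min_L F_t\,dt$, so their sum is bounded by $\int_0^1 \osc_M F_t\,dt$. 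Since $F$ was arbitrary subject to $\phi_F = \varphi$, taking the infimum yields $\ell'(\varphi_*\alpha; H) - \ell(\alpha; H) \leq \|\varphi\|$. The reverse inequality follows by replaying the argument with the roles of $L$ and $L'$ swapped and $\varphi$ replaced by $\varphi^{-1}$, using $\|\varphi^{-1}\| = \|\varphi\|$.

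The main obstacle is the identification of $H \circ \varphi$ with $F \sharp H \sharp \overline{F}$ in $\wt\Ham$. I would verify it by writing both paths in the form $\phi_F^{a(t)} \phi_H^{b(t)} \phi_F^{c(t)}$: the conjugation path corresponds to $(a,b,c)(t) = (-1, t, 1)$, while the concatenation corresponds to the piecewise-linear triple $(0,0,3t)$ on $[0,\tfrac13]$, $(0,3t-1,1)$ on $[\tfrac13,\tfrac23]$, and $(2-3t, 1, 1)$ on $[\tfrac23,1]$. Linear interpolation in $(a,b,c)$ then produces an explicit homotopy rel endpoints: at $t=0$ the expression collapses to $\id$ by cancellation, and at $t=1$ it equals $\varphi^{-1}\phi_H\varphi$, for every value of the homotopy parameter. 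Finally, the inequality for $\wt\chi \in \wt\Ham(M,\omega)$ is a direct consequence of Proposition \ref{prop:sp_invts_htpy_class_path_Intro}, since both $\ell(\alpha;\cdot)$ and $\ell'(\varphi_*\alpha;\cdot)$ factor through $\wt\Ham$ on normalized Hamiltonians.
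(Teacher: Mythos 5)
Your proof is correct and follows essentially the same strategy as the paper's: apply \textsc{Symplectic invariance} to reduce to a single Lagrangian, pass to $\wt\Ham(M,\omega)$ via homotopy invariance, write $\alpha = [L]\star\alpha\star[L]$ and use \textsc{Triangle inequality} twice, then bound the two $\ell_+$ terms. The only cosmetic differences are that you bound $\ell_+(F)$ and $\ell_+(\overline F)$ via \textsc{Lagrangian control} (the paper uses \textsc{Continuity}, which gives the same $\osc_M$ bound), and you explicitly sketch the homotopy rel endpoints between the conjugated path and the concatenation $F\sharp H\sharp\overline F$, whereas the paper simply asserts the homotopy between $t\mapsto\varphi^{-1}\phi_H^t\varphi$ and $t\mapsto(\phi_K^t)^{-1}\phi_H^t\phi_K^t$ and invokes the group structure on $\wt\Ham$.
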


\noindent This property is somewhat surprising since, in the monotone case, one might have expected a bound in terms of Hofer's geometry of the universal cover of the Hamiltonian diffeomorphism group and not of the group itself (compare to the weakly exact case \cite{Leclercq_spectral_invariants_Lagr_FH} and \cite{Monzner_Vichery_Zapolsky_partial_qms_qss_cot_bundles} where the Lagrangian spectral invariants only depend on time--$1$ objects).

In a similar vein we formulate a relation with the Lagrangian Hofer distance (see \cite{Chekanov_Invariant_Finsler_metrics_Lagrangians}). Here we use a variant of this distance defined on the universal cover of the space of Lagrangians which are Hamiltonian isotopic to a given one $L$. It is defined as follows. Recall that given a Hamiltonian $H$, the associated path of Lagrangians $L_t = \phi_H^t(L)$ is exact, meaning the tangent vector to this path at every point $L_t$ is given by an exact $1$-form on $L_t$. Conversely, any exact path of Lagrangians starting at $L$ can be represented in such a way. Given a homotopy class $\wt L$ of exact paths of Lagrangians from $L$ to $L'$ we define the \tb{Hofer length} of $\wt L$ to be
$$\|\wt L\| = \inf \big\{ \textstyle \int_0^1 \osc_M(H_t)\,dt\,|\, [\{\phi_H^t(L)\}_{t \in [0,1]}] = \wt L \big\}\,.$$
Such a homotopy class induces a natural isomorphism $QH_*(L) \simeq QH_*(L')$. Indeed, assume $L_t = \phi_H^t(L)$ is an exact path of Lagrangians. The symplectomorphism $\phi$ induces an isomorphism $(\phi_H)_* \fc QH_*(L) \to QH_*(L')$, see \S\ref{subsec:symplectomorphism_group}. It is not hard to show that this isomorphism only depends on the homotopy class of the path $\{L_t\}_t$ relative to the endpoints.
\begin{prop}\label{prop:relation_Lagr_Hofer_distance}
Let $\wt L$ be a homotopy class of exact paths of Lagrangians connecting $L$ and $L'$, and let $\alpha \in QH_*(L)$, $\alpha' \in QH_*(L')$ be quantum homology classes corresponding to each other by the above natural isomorphism induced by $\wt L$. Then
$$| \ell (\alpha ; H) - \ell'(\alpha' ; H) | \leq  \|\wt L\| \quad \text{and} \quad | \ell (\alpha ; \wt\chi) - \ell'(\alpha' ; \wt\chi) | \leq  \|\wt L\|$$
for any $H \in C^0(M \times [0,1])$ and $\wt\chi \in \wt\Ham(M,\omega)$.
\end{prop}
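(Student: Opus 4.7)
The plan is to deduce this proposition directly from Proposition \ref{theo:lagrangian-nature} by realizing a near-minimizing representative of $\wt L$ as the path of Lagrangians traced out by an ambient Hamiltonian flow, and then applying \ref{theo:lagrangian-nature} to the resulting time-one diffeomorphism.

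Fix $\varepsilon > 0$. By definition of $\|\wt L\|$ as an infimum, choose a smooth Hamiltonian $K \co M \times [0,1] \to \R$ such that $\phi_K^1(L) = L'$, the resulting exact path $\{\phi_K^t(L)\}_{t \in [0,1]}$ represents the homotopy class $\wt L$, and
$$\int_0^1 \osc_M(K_t)\,dt \leq \|\wt L\| + \varepsilon.$$
Set $\varphi := \phi_K^1 \in \Ham(M,\omega)$. Then $\varphi(L) = L'$, and since, as recalled in the paragraph preceding the proposition, the isomorphism $QH_*(L) \to QH_*(L')$ induced by the homotopy class $\wt L$ coincides with $\varphi_\ast$, we have $\varphi_\ast(\alpha) = \alpha'$. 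Proposition \ref{theo:lagrangian-nature} applied to $\varphi$ then yields
$$|\ell(\alpha;H) - \ell'(\alpha';H)| \leq \|\varphi\|, \qquad |\ell(\alpha;\wt\chi) - \ell'(\alpha';\wt\chi)| \leq \|\varphi\|,$$
and because $K$ is one admissible Hamiltonian generating $\varphi$, the definition of the Hofer norm of a Hamiltonian diffeomorphism gives $\|\varphi\| \leq \int_0^1 \osc_M(K_t)\,dt \leq \|\wt L\| + \varepsilon$. Letting $\varepsilon \to 0$ proves both inequalities.

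The one conceptual point that requires care, and that I regard as the main obstacle, is the identification of isomorphisms used above: when $\varphi = \phi_K^1$ is constructed as the time-one map of a representative of $\wt L$, the iso $\varphi_\ast$ appearing in \ref{theo:lagrangian-nature} must be shown to agree with the iso $QH_*(L) \simeq QH_*(L')$ induced by $\wt L$. This is, however, exactly the content of the statement recalled immediately before the proposition, namely that $(\phi_H)_\ast$ depends only on the homotopy class of the path $\{\phi_H^t(L)\}$ relative to the endpoints; in particular $\varphi_\ast$ sees nothing of $\varphi$ beyond the class $\wt L$ that it realizes, so the reduction to \ref{theo:lagrangian-nature} is legitimate and the infimum recovers exactly $\|\wt L\|$ rather than the a priori smaller Hofer norm $\|\varphi\|$.
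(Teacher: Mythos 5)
Your proof is correct, and it rests on exactly the same estimate as the paper's: for any $K$ generating a path in the class $\wt L$, one has $|\ell(\alpha;H)-\ell'(\alpha';H)|\leq\int_0^1\osc_M K_t\,dt$, and taking a near-minimizer yields $\|\wt L\|$. The only (cosmetic) difference is the packaging: the paper derives this estimate once and then obtains both Proposition~\ref{theo:lagrangian-nature} and Proposition~\ref{prop:relation_Lagr_Hofer_distance} by taking the infimum over different collections of $K$, whereas you deduce Proposition~\ref{prop:relation_Lagr_Hofer_distance} formally from Proposition~\ref{theo:lagrangian-nature} by observing that $\|\varphi\|\leq\int_0^1\osc_M K_t\,dt$ for $\varphi=\phi_K^1$; both routes are valid and equally short, and you correctly isolate and resolve the one delicate point, namely that $(\phi_K^1)_*$ equals the isomorphism induced by $\wt L$.
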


These properties are proved in \S\ref{sec:proof-Hofer-bound}.

\begin{remark}
Let $\wt\cL$ be the universal cover of the space of Lagrangians Hamiltonian isotopic to $L$, that is an element of $\wt\cL$ is a homotopy class of exact paths of Lagrangians starting at $L$, relative to the endpoints. Over $\wt\cL$ we have the natural local system of algebras $\cQ\cH \to \wt\cL$ whose fiber at a homotopy class $\wt L$ with endoint $L'$ is given by $QH_*(L')$. The above isomorphisms provide a canonical trivialization of this local system, that is $\cQ\cH = QH_*(L) \times \wt \cL$. The above discussion allows us, given $\alpha \in QH_*(L)$, to view the associated spectral invariant as a function on the space
$$\wt \cL \times \wt\Ham(M,\omega)\,.$$
Proposition \ref{prop:relation_Lagr_Hofer_distance}, together with the \textsc{Continuity} property, implies that every such function is Lipschitz with respect to the natural Hofer (pseudo-)metric on this space.
\end{remark}

\subsection{An application to symplectic rigidity}

We now present a sample application of this theory to symplectic rigidity.

In \cite{Entov_Polterovich_Quasi_states_symplectic_intersections} Entov--Polterovich introduced the notion of a symplectic quasi-state. In the following definition $\{\cdot,\cdot\}$ stands for the Poisson bracket and $\| \cdot \|_{C^0}$ for the supremum norm.
\begin{defin}
A quasi-state on $M$ is a functional $\zeta \fc C^0(M) \to \R$ satisfying
\begin{Properties}
\item[Normalization] $\zeta(1) = 1$.
\item[Quasi-linearity] For $F,G \in C^\infty(M)$ with $\{F,G\} = 0$ we have $\zeta(F+G) = \zeta(F) + \zeta(G)$.
\item[Continuity] For $F,G \in C^0(M)$ we have $|\zeta(F) - \zeta(G)\| \leq \|F-G\|_{C^0}$.
\end{Properties}
\end{defin}

They developed a construction of (nonlinear) symplectic quasi-states on $M$ using idempotents in $QH_*(M)$. We refer the reader to \cite{Entov_Polterovich_Symp_QSs_semisimplicity_QH} for details. Briefly, if $QH_*(M) \simeq \cF \oplus \cQ$ as an algebra where $\cF$ is a field, then the spectral invariant $c(e;\cdot)$, where $e \in \cF$ is the unit, has the property that the functional
$$F \mapsto \zeta_e(F) = \lim_{k \to \infty}\frac{c(e;kF)}{k}$$
is a symplectic quasi-state.

They also defined a class of rigid subsets of $M$, namely superheavy subsets with respect to a quasi-state:
\begin{defin}
Let $\zeta \fc C^0(M) \to \R$ be a symplectic quasi-state on $M$. A closed subset $X \subset M$ is called \tb{$\zeta$-superheavy} if for each $c \in \R$
$$\zeta(F) = c \quad \text{for any }F \in C^0(M) \text{ with }F|_X = c\,.$$
If $e \in QH_*(M)$ is an idempotent as above and $\zeta = \zeta_e$, we also say that $X$ is $e$-superheavy.
\end{defin}

In this paper we use Lagrangian spectral invariants to prove the superheaviness of certain Lagrangian submanifolds. Namely, we have the following result, proved in \S \ref{sec:Superheavy} below:
\begin{prop}\label{prop:superheavy_if_not_killed_idem}
Let $e \in QH_*(M)$ be an idempotent as above. If $e\bullet [L]  \neq 0$, then $L$ is $e$-superheavy.
\end{prop}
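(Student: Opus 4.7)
My plan is to reduce superheaviness to a one-sided bound on $\zeta_e$ and then to deduce that bound from the \textsc{Module structure} property (Proposition \ref{prop:module_struct_Intro}) together with an extension of \textsc{Lagrangian control} to nonzero classes other than $[L]$.

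\emph{Reduction.} Since $e$ is the unit of a field factor of $QH_*(M)$, the functional $\zeta_e$ is a genuine Entov--Polterovich symplectic quasi-state; in particular, $\zeta_e(c) = c$ for every constant $c \in \R$ (by quasi-linearity on constants together with \textsc{Normalization} and \textsc{Continuity}), and $\zeta_e(-F) = -\zeta_e(F)$ for every $F \in C^\infty(M)$ (since $\{F,-F\} = 0$ and $\zeta_e(0) = 0$). Hence, in order to show that $\zeta_e(F) = c$ whenever $F|_L = c$ is constant, it suffices to establish the one-sided claim
\[
F \in C^\infty(M), \; F|_L \geq 0 \;\Longrightarrow\; \zeta_e(F) \geq 0,
\]
because applying it to $F - c$ gives $\zeta_e(F) \geq c$, while applying it to $c - F$, which also has boundary value $\geq 0$ on $L$, gives $\zeta_e(F) \leq c$.

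\emph{Proof of the reduced claim.} Fix $F \in C^\infty(M)$ with $F|_L \geq 0$ and $k \geq 0$. Applying the \textsc{Module structure} inequality with $a = e$, $\alpha = [L]$, $H^1 = 0$ and $H^2 = kF$ (after the usual cut-off near the endpoints of the time interval) yields
\[
\ell(e \bullet [L]; 0 \sharp kF) \leq c(e; kF) + \ell([L]; 0) = c(e; kF),
\]
using $\ell_+(0) = 0$ from \textsc{Normalization}. The left-hand side is equal to $\ell(e \bullet [L]; kF)$ up to reparametrization of time. Because the class $e \bullet [L]$ is nonzero by hypothesis, I may invoke the extension of \textsc{Lagrangian control} to general nonzero classes, which I expect to be part of the fuller Theorem \ref{thm:main_properties_Lagr_sp_invts}: since $kF|_L \geq 0$,
\[
\ell(e \bullet [L]; kF) \geq k \min_L F \geq 0.
\]
Combining the two bounds gives $c(e; kF) \geq 0$ for every $k \geq 0$, hence $\zeta_e(F) = \lim_{k \to \infty} c(e; kF)/k \geq 0$, as required.

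The principal obstacle is the second displayed inequality, which requires $\ell(\alpha; H) \geq \int_0^1 \min_L H_t\, dt$ for every nonzero $\alpha \in QH_*(L)$, a natural strengthening of the \textsc{Lagrangian control} statement for $\ell_+$ in Theorem \ref{thm:main_properties_Intro}. This is exactly the step where the hypothesis $e \bullet [L] \neq 0$ enters decisively: if $e \bullet [L]$ vanished, the module-structure inequality would still hold but become vacuous, and no information on $c(e; kF)$ could be extracted from it.
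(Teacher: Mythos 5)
Your argument follows essentially the same route as the paper: \textsc{Module structure} combined with \textsc{Normalization} and \textsc{Lagrangian control}, then divide by $k$ and pass to the limit. One small slip: the paper's \textsc{Lagrangian control} for a general nonzero class $\alpha$ reads $\ell(\alpha;H) \geq \int_0^1 \min_L H_t\,dt + \nu(\alpha)\sfA$ (there is a shift by the quantum valuation $\nu(\alpha)\sfA$, which can be negative), so from your chain of inequalities you actually obtain $c(e;kF) \geq k\min_L F + \nu(e\bullet[L])\sfA$ rather than $c(e;kF) \geq 0$ for every $k$; since the constant washes out in the limit $\lim_k c(e;kF)/k$, the conclusion $\zeta_e(F) \geq \min_L F \geq 0$ is unaffected and your proof is sound.
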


\begin{remark}This proposition is analogous to, and in certain cases follows from, Proposition 8.1 of \cite{Entov_Polterovich_rigid_subsets_sympl_mfds}. \footnote{We thank Leonid Polterovich for pointing this out to us.} This is essentially because the quantum inclusion $i_L$ of Biran--Cornea \cite{Biran_Cornea_Lagr_QH}, appearing in the formulation of the cited proposition in the guise of the map $j^q$, is dual to the quantum action $QH(M) \ni e \mapsto e\bullet [L] \in QH(L)$.

\end{remark}

We apply this proposition to two examples of monotone Lagrangians. First we consider the monotone product $S^2 \times S^2$ in which there is a monotone Lagrangian torus $L_{S^2\times S^2}$ defined as follows. View $S^2 \subset \R^3$ as the set of unit vectors. Then
$$L_{S^2 \times S^2} = \big\{(x,y) \in S^2 \times S^2\,|\, x\cdot y = -\tfrac 1 2\,, x_3 + y_3 = 0\big\}\,,$$
where $x\cdot y$ is the Euclidean scalar product. See \cite{Eliashberg_Polterovich_Symp_QSs_quadric_Lag_submfds} and references therein.

The other example is the Chekanov monotone torus in $L_{\C P^2} \subset \C P^2$ \cite{Chekanov_Schlenk_Notes_mon_twist_tori}. We refer the reader to the paper by Oakley--Usher \cite{Oakley_Usher_Lagr_submfds}, in which they review various constructions of monotone tori in $S^2 \times S^2$ and in $\C P^2$, and prove that they are all Hamiltonian equivalent. Also see the paper by Gadbled \cite{Gadbled_On_exotic_mon_Lagr_tori}. One possible description of $L_{\C P^2}$ is as follows. Consider the degree $2$ polarization of $\C P^2$ by a conic. This conic is a complex (and thus symplectic) sphere, in which there is a monotone Lagrangian circle, the equator. The Lagrangian circle bundle construction \cite{Biran_Cieliebak_Symp_top_subcritical_mfds, Biran_Lag_non_intersections} in this situation yields $L_{\C P^2}$.
\begin{theo}\label{thm:superheavy_tori}
\begin{itemize}
 \item The Chekanov torus $L_{\C P^2}$ is superheavy with respect to the fundamental class $[\C P^2]$ taken over the ground ring $R = \C$.
 \item Over the ground ring $R = \C$, the quantum homology $QH_*(S^2 \times S^2)$ contains two idempotents $e_\pm$. The torus $L_{S^2 \times S^2}$ is $e_+$-superheavy.
\end{itemize}
\end{theo}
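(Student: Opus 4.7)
The plan is to apply Proposition \ref{prop:superheavy_if_not_killed_idem} twice. In each case the task reduces to exhibiting an idempotent $e$ of $QH_*(M;\C)$ (with $M = \C P^2$, resp.\ $S^2\times S^2$) for which $e\bullet [L]\neq 0$ in $QH_*(L;\C)$; the desired $e$--superheaviness then follows immediately from that proposition.

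For the Chekanov torus $L_{\C P^2}$, the natural choice is $e = [\C P^2]$, which in the paper's Novikov setup is a nonzero idempotent of $QH_*(\C P^2;\C)$ of the type considered in \cite{Entov_Polterovich_Symp_QSs_semisimplicity_QH}. Since the unit acts as the identity on any module over $QH_*(\C P^2)$, one has $[\C P^2]\bullet [L_{\C P^2}] = [L_{\C P^2}]$, so the problem reduces to showing that $[L_{\C P^2}]\neq 0$ in $QH_*(L_{\C P^2};\C)$, i.e.\ that $L_{\C P^2}$ is \emph{wide} over $\C$. The plan is to obtain this by a direct pearl count in the spirit of Biran--Cornea, using the Lagrangian circle bundle description of $L_{\C P^2}$ coming from the conic polarization, or equivalently via Cho's superpotential formalism over $\C$.

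For the torus $L_{S^2\times S^2}$, let $a$, $b\in QH_*(S^2\times S^2;\C)$ denote the two degree--two sphere classes. The quantum relations give $a\star a = b\star b = T[M]$ for a Novikov monomial $T$, hence $(a-b)\star(a+b) = 0$, and the ring splits as a sum of two field summands with idempotents
\[ e_\pm \;=\; \tfrac 1 2\big([M] \pm T^{-1}\, a\star b\big)\,, \]
as one checks using $a^2\star b^2 = T^2[M]$. The second step is to compute the Biran--Cornea module action of $a\star b$ on $[L_{S^2\times S^2}]$: by the wideness of this torus (as essentially shown in \cite{Eliashberg_Polterovich_Symp_QSs_quadric_Lag_submfds}), combined with an explicit pearl count adapted to the description $x\cdot y = -\tfrac 1 2$, $x_3 + y_3 = 0$, one determines that $T^{-1}(a\star b)$ acts on $[L_{S^2\times S^2}]$ as $\pm\id$. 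Labelling $e_\pm$ so that $e_+$ corresponds to the $+$ sign yields $e_+\bullet [L_{S^2\times S^2}] = [L_{S^2\times S^2}]\neq 0$, at which point Proposition \ref{prop:superheavy_if_not_killed_idem} applies.

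The structural part of the argument above is routine; the substance lies in two Floer-theoretic non-vanishing statements, namely $[L_{\C P^2}]\neq 0$ in $QH_*(L_{\C P^2};\C)$, and the precise sign of the $(a\star b)$-action on $[L_{S^2\times S^2}]$. Both would be handled via direct pearl counts using the explicit descriptions of the two tori, or equivalently via the superpotential formalism of Cho and Fukaya--Oh--Ohta--Ono. The hypothesis $R = \C$ enters precisely at this point, to ensure that the relevant critical values of the superpotential lie in the ground ring.
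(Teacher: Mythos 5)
Your overall strategy — apply Proposition~\ref{prop:superheavy_if_not_killed_idem} in each case by producing an idempotent $e$ with $e\bullet[L]\neq 0$ — matches the paper's, and your treatment of the first bullet ($[\C P^2]$ is the unit, so the issue reduces to $[L_{\C P^2}]\neq 0$ in $QH_*(L_{\C P^2};\C)$, which is taken as known from the pearl-count computations for the Chekanov torus) is essentially identical to what the paper does, modulo citing versus recomputing the wideness.

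For the second bullet you take a genuinely different route. You propose to compute, via a direct pearl count, the sign with which $T^{-1}(a\star b)$ acts on $[L_{S^2\times S^2}]$, and then read off which of $e_\pm$ fails to annihilate $[L]$. This would work if one carried out the computation, but the sign determination is precisely the nontrivial Floer-theoretic input, and you defer it entirely (``one determines that \dots''). The paper circumvents this computation altogether: it observes that the antidiagonal $\ol\Delta\subset S^2\times S^2$ is $e_-$-superheavy (a known fact), that $L_{S^2\times S^2}$ is disjoint from $\ol\Delta$, and therefore $L$ cannot be $e_-$-superheavy; the contrapositive of Proposition~\ref{prop:superheavy_if_not_killed_idem} then forces $e_-\bullet[L]=0$. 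Combined with $[M]\bullet[L]=[L]\neq 0$ (wideness again taken from prior work) one gets $e_+\bullet[L]=[L]$ for free. The upshot is that the paper trades your open-ended pearl computation for a disjointness argument plus a previously established rigidity fact about the antidiagonal, which is both shorter and more robust (no orientation/sign bookkeeping). Your approach buys a more self-contained, computational proof \emph{in principle}, but as written the crucial sign computation is a genuine gap; if you want to pursue it you would need to make that pearl count explicit, whereas the disjointness route sidesteps the issue entirely.
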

\noindent We prove this in \S \ref{sec:Superheavy}.

\subsection{Relation with previous results}

The properties of the Lagrangian spectral invariants listed in Theorem \ref{thm:main_properties_Intro} have previously appeared in other contexts, and consequently they are more or less standard, see \cite{Viterbo_gfqi, Oh_Construction_sp_invts_Ham_paths_closed_symp_mfds, Monzner_Vichery_Zapolsky_partial_qms_qss_cot_bundles}. The major exception here is the \textsc{Module property}, see Proposition \ref{prop:module_struct_Intro}. An early version of this result was establised in \cite{Monzner_Vichery_Zapolsky_partial_qms_qss_cot_bundles} for the zero section of a cotangent bundle, in case $a = [M]$ and $\alpha = [L]$. An inequality identical to ours in the setting of weakly exact Lagrangians was proved in \cite{Duretic_Katic_Milinkovic}. \textsc{Triangle inequality} for Floer--homological spectral invariants was first proved by Oh \cite{Oh_sympl_topology_action_fcnl_II}, although the inequality proved there included an error term. In the context of Hamiltonian Floer homology the sharp inequality was proved by Schwarz \cite{Schwarz_Action_sp_aspherical_mfds}. A sharp triangle inequality for Lagrangian spectral invariants was proved in \cite{Monzner_Vichery_Zapolsky_partial_qms_qss_cot_bundles} based on the trick due to Abbondandolo--Schwarz \cite{AS10}.

Theorem \ref{thm:spectral_invts_diagonal_Intro} was proved by Leclercq \cite{Leclercq_spectral_invariants_Lagr_FH} for weakly exact $M$. The canonical isomorphism $QH_*(\Delta) = QH_*(M)$ appears, for example, in \cite{Biran_Polterovich_Salamon_Propagation_Ham_dyn_rel_symp_H}.

Various versions of Proposition \ref{prop:superheavy_if_not_killed_idem} have appeared in the literature, see for example \cite{Entov_Polterovich_rigid_subsets_sympl_mfds}, as well as \cite{Biran_Cornea_Lagr_QH}; these results use early ideas appearing in \cite{Albers_On_the_extrinsic_top_Lag_sumbfds}; see also \cite{FOOO_Sp_invts_bulk_QMs_Lag_HF}. Let us point out that the estimates on spectral invariants in these papers use quantities such as depth and height of a Hamiltonian relative to $L$, which in our opinion are somewhat artificial, and they do not have a clear ``symplectic'' meaning. In contrast, Lagrangian spectral invariants do possess such a meaning, and therefore the comparison of Hamiltonian and Lagrangian spectral invariants, used in the proof of Proposition \ref{prop:superheavy_if_not_killed_idem} seems to be a more natural approach.

The superheaviness of the Chekanov torus $L_{\C P^2} \subset \C P^2$ was proved by Wu \cite{Weiwei_Wu_On_an_exotic_Lag_torus_CP_two}. We would like to point out, however, that our proof does not rely on the machinery of toric degenerations. The superheaviness of the torus $L_{S^2 \times S^2}$ was established by Eliashberg--Polterovich \cite{Eliashberg_Polterovich_Symp_QSs_quadric_Lag_submfds}. Again, our proof is based on a more natural approach to estimating Hamiltonian spectral invariants via Lagrangian ones.

\subsection{Discussion}\label{subsec:discussion}

We use quantum homology over arbitrary rings, therefore the issue of orientations plays a crucial role. This was resolved in the context of periodic orbit Floer homology in \cite{Floer_Hofer_Coherent_orientations}, which gave rise to the notion of coherent orientations, which has been used ever since. Coherent orientations for Lagrangian Floer homology are constructed in \cite{FOOO_Lagr_intersection_Floer_thry_anomaly_obstr_II}, and for Lagrangian quantum homology in \cite{Biran_Cornea_Lagr_top_enumerative_invts}; see also \cite{Hu_Lalonde_Relative_Seidel_morphism_Albers_map, Wehrheim_Woodward_Orientations_pseudoholo_quilts}. We use here the notion of canonical orientations, which also appears in \cite{Seidel_The_Book, Welschinger_Open_strings_Lag_conductors_Floer_fctr, Abouzaid_Symp_H_Viterbo_thm}. This has the advantage that the structures inherent to the theory are more transparent. The drawback, of course, is a higher level of abstraction, and that computations are less straightforward.

Assumption \tb{(O)} is the minimal assumption under which the so-called canonical quantum homology can be defined \cite{Zapolsky_Canonical_ors_HF}. The corresponding chain complex distinguishes homotopy classes of disks, rather than their homology classes or symplectic area, which are the more usual equivalence relations appearing in Floer theories. The advantage of the canonical complex is that there are no further assumptions or choices required for its definition, for instance $L$ is not required to be orientable or relatively (S)Pin.

The drawback of the canonical complex is that the resulting homology is oftentimes too small and does not contain the desired classes, for example, it may happen that not all singular homology classes of $L$ appear in it. To remedy this, there is the possibility of forming quotient complexes, where cappings having the same area or the same homology class are identified, in which case the homology of the complex is better behaved. In order to form such quotient complexes, further assumptions and choices need to be made. Even though precise assumptions can be formulated in terms of various obstruction classes, we point out that any sort of quotient complex can be formed once $L$ is assumed to be relatively $\Pin^\pm$ and a relative $\Pin^\pm$ structure has been chosen.

\begin{defin}\label{def:relatively_Pin}
We say that $L$ is \tb{relatively} $\Pin^\pm$ if the class $w^\pm(L)$ is in the image of the restriction morphism $H^2(M;\Z_2) \to H^2(L;\Z_2)$, where $w^+(L) = w_2(TL)$ and $w^-(L) = w_2(TL) + w_1^2(TL)$.
\end{defin}
\noindent In \cite{Zapolsky_Canonical_ors_HF} the notion of a relative $\Pin^\pm$ structure is defined and it is shown how to produce coherent orientations for disks in $M$ with boundary on $L$ and how to use them to define the aforementioned quotient complexes.

Finally let us point out that, as we mentioned in Remark \ref{rem:superheavy_set_enough_one_side_ineq}, assumption \tb{(O)} is strictly weaker than $L$ being relatively $\Pin^\pm$. An example is furnished by the Lagrangian $\R P^5 \subset \C P^5$. Indeed, assumption \tb{(O)} is satisfied since $\pi_2(\R P^5) = 0$. On the other hand $\R P^5$ is not relatively $\Pin^\pm$ since the restriction map $H^2(\C P^5;\Z_2) \to H^2(\R P^5;\Z_2)$ vanishes while the classes $w_2(\R P^5) = w_2(\R P^5) + w_1^2(\R P^5)$ do not. \footnote{We wish to thank Jean-Yves Welschinger for this example.}

\subsection{Overview of the paper}

In \S \ref{sec:background} we review the construction of Lagrangian and symplectic Floer and quantum homology with canonical orientations, as well as the relevant Piunikhin--Salamon--Schwarz isomorphisms. Various algebraic structures and operations appearing in these theories are also described. In \S \ref{sec:definition} we define the Lagrangian spectral invariants and prove basic properties thereof. \S \ref{sec:main-properties} is devoted to the proof of main properties of spectral invariants. In \S\ref{sec:proof-Hofer-bound} we prove the Hofer bounds on spectral invariants formulated in \S\ref{sec:Hofer-bound}. Finally \S \ref{sec:Superheavy} contains the proofs regarding superheavy Lagrangians.

\subsection{Acknowledgements}

RL is partially supported by ANR Grant ANR-13-JS01-0008-01. FZ is partially supported by grant number 1281 from the GIF, the German--Israeli Foundation for Scientific Research and Development, by grant number 1825/14 from the Israel Science Foundation, and wishes to thank RL and Universit\'e Paris--Sud for the opportunity to visit the maths department as a professeur invit\'e, during which substantial portions of the present paper, as well as of \cite{Zapolsky_Canonical_ors_HF}, were written; the research and personal atmosphere there were excellent, and the cafeteria was nice. The authors are grateful to Leonid Polterovich whose suggestion lead them to Proposition \ref{theo:lagrangian-nature}.

\section{Background: Floer homology, quantum homology, and PSS}\label{sec:background}

Here we collect the necessary preliminaries concerning the Floer theories that we need in order to define and study Lagrangian spectral invariants, and establish notation. Floer homology was defined by Floer \cite{Floer_Symp_fixed_pts_holo_spheres, Floer_Morse_thry_Lagr_intersections, Floer_unregularized_grad_flow_symp_action, Floer_Witten_cx_inf_dim_Morse_thry}, and extended by Hofer--Salamon \cite{Hofer_Salamon_HF_Nov_rings}. The Lagrangian case was handled, for instance, by Floer (\emph{loc. cit.}), Oh \cite{Oh_FH_Lagr_intersections_hol_disks_I}, Biran--Cornea \cite{Biran_Cornea_Lagr_QH, Biran_Cornea_Lagr_top_enumerative_invts}, Seidel \cite{Seidel_The_Book}, Fukaya--Ohta--Oh--Ono \cite{FOOO_Lagr_intersection_Floer_thry_anomaly_obstr_I, FOOO_Lagr_intersection_Floer_thry_anomaly_obstr_II}.

The principal reference for all the material in this section is \cite{Zapolsky_Canonical_ors_HF}, where the reader will find a detailed description of all the concepts not defined here; the style of \cite{Seidel_The_Book} will undoubtedly be felt here, and indeed this book served as inspiration for many of the constructions presented herein.

\subsection{Morse theory} We briefly recall the basic notions of Morse theory, mostly in order to establish notation. For a Morse function $f$ on a manifold $Q$ we denote by $\Crit f$ its set of critical points. The index of $q \in \Crit f$ is denoted by $\ind_f q$, or just $\ind q$ if $f$ is clear from the context. For a Riemannian metric $\rho$ on $Q$ we let $\cS_f(q)$, $\cU_f(q)$ be the stable and the unstable manifolds of $f$ at $q$ with respect to the negative gradient flow of $f$ relative to $\rho$; mostly we omit the subscript $_f$. We say that the pair $(f,\rho)$ is \tb{Morse--Smale} if every unstable manifold intersects every stable manifold transversely.

\subsection{Floer homology}

This section is concerned with Lagrangian and periodic orbit Floer homology.

\subsubsection{Lagrangian Floer homology}\label{subsubsec:Lagr_HF}

We let
$$\Omega_L = \big\{\gamma \fc [0,1] \to M\,|\, \gamma(0),\gamma(1) \in L\,,[\gamma] = 0 \in \pi_1(M,L)\big\}\,.$$
Let $D^2 \subset \C$ be the closed unit disk with the standard conformal structure. We consider the punctured disk $\dot D^2 = D^2 \setminus \{1\}$ with the induced conformal structure and endow it with the standard positive end $\epsilon \fc [0,\infty) \times [0,1] \to \dot D^2$ defined by
\begin{equation}\label{eq:std_end}
\epsilon(z) = \frac{e^{\pi z} - i}{e^{\pi z} + i}\,.
\end{equation}
A \tb{capping} of an arc $\gamma \in \Omega_L$ is a smooth map $\wh \gamma \fc (\dot D^2,\partial \dot D^2) \to (M,L)$ which is b-smooth\footnote{Roughly speaking, being b-smooth means that $\wh \gamma \circ \epsilon$ converges to $\gamma$ sufficiently fast together with all derivatives as $s \to \infty$. See also Schwarz's thesis \cite{Schwarz_PhD_thesis} and his book on Morse homology \cite{Schwarz_Morse_H_book}, where this notion appears under a different name.} in the sense of \cite{Zapolsky_Canonical_ors_HF}, and satisfies
$$\wh \gamma \big(\epsilon(s,t) \big) \xrightarrow[s \to \infty]{} \gamma(t)\,.$$
This condition in particular means that if we compactify $\dot D^2$ by adding an interval at infinity according the the end $\epsilon$ to obtain a surface with corners, then $\wh\gamma$ extends to a smooth map defined on this surface. Sometimes we will also use this extension, and by abuse of notation we will denote it by $\wh\gamma$ as well.

We let $\dot D^2_- = D^2 \setminus \{-1\}$ with the induced conformal structure. For a capping $\wh \gamma$ of $\gamma$ we define its opposite as the map
$$-\wh\gamma \fc \dot D^2_- \to M \quad \text{given by}\quad -\wh\gamma(\sigma,\tau) = \gamma(-\sigma,\tau)\,.$$
We call two cappings $\wh\gamma,\wh\gamma'$ equivalent, and denote $\wh\gamma \sim \wh\gamma'$, if the preglued map $\wh\gamma \sharp (- \wh\gamma')$ defines the trivial element in $\pi_2(M,L)$ (see \cite{Zapolsky_Canonical_ors_HF} for details on pregluing). Two pairs $(\gamma,\wh\gamma),(\gamma',\wh\gamma')$ are equivalent if $\gamma = \gamma'$ and $\wh\gamma,\wh\gamma'$ are equivalent cappings. We let the class of $(\gamma,\wh\gamma)$ be denoted by $[\gamma,\wh\gamma]$ and define
$$\wt\Omega_L = \big\{ [\gamma,\wh\gamma]\,|\,\gamma \in \Omega_L\,,\wh\gamma \text{ is a capping of }\gamma \big\}\,,$$
which together with the obvious projection $p \fc \wt\Omega_L \to \Omega_L$ forms a covering space of $\Omega_L$. We usually denote elements of $\wt\Omega_L$ by $\wt\gamma$ and it is understood that it is the class $[\gamma,\wh\gamma]$.

For a time-dependent Hamiltonian $H$ on $M$ we have the action functional
$$\cA_{H:L} \fc \wt\Omega_L \to \R\,, \quad \cA_{H:L}\big(\wt\gamma = [\gamma,\wh\gamma] \big) = \int_0^1 H_t \big(\gamma(t) \big)\, dt - \int_{\dot D^2}\wh\gamma^*\omega\,.$$
Its critical point set $\Crit \cA_{H:L}$ consists of the classes $[\gamma,\wh\gamma]$ for which $\gamma$ is a Hamiltonian arc of $H$, that is solves the ODE $\dot\gamma = X_H(\gamma)$.

We call the Hamiltonian $H$ \tb{nondegenerate} if for every critical point $\wt\gamma \in \Crit \cA_{H:L}$ the linearized map $\phi_{H,*} \fc T_{\gamma(0)}M \to T_{\gamma(1)}M$ maps $T_{\gamma(0)}L$ to a subspace transverse to $T_{\gamma(1)}L$. For such a Hamiltonian there is a well-defined function
$$m_{H:L} \fc \Crit \cA_{H:L} \to \Z$$
called the Conley--Zehnder index.\footnote{We normalize it so that if $H$ is the pullback of a $C^2$-small Morse function $f \fc L \to \R$ to a Weinstein neighborhood of $L$, then for $q \in \Crit f$ and $\wh q$ being the constant capping we have $m_{H:L} \big([q,\wh q]\big) = \ind_f q$, and in general $m_{H:L} \big([\gamma,A\sharp \wh \gamma] \big) = m_{H:L} \big([\gamma,\wh\gamma] \big) - \mu(A)$ for $A \in \pi_2(M,L,\gamma_0)$.}

Fix now a time-dependent $\omega$-compatible almost complex structure $J$ on $M$. In \cite{Zapolsky_Canonical_ors_HF} it is explained how to construct, for $p > 2$, a Fredholm operator
$$D_{\wh\gamma} \fc W^{1,p} \big( \dot D^2,\partial \dot D^2; \wh\gamma^*TM, (\wh\gamma|_{\partial \dot D^2})^*TL \big) \to L^p(\dot D^2;\Omega^{0,1}\otimes \wh\gamma^*TM)$$
given a capping $\wh\gamma$. This operator has index $\ind D_{\wh\gamma} = n - m_{H:L}(\wt\gamma)$. \emph{Ibid.}, it is shown how to assemble these into a Fredholm bundle $D_{\wt\gamma}$ over the space $\wt\gamma$ of cappings\footnote{The set $\wt\gamma = \big\{(\gamma,\wh\gamma)\,|\,\wh\gamma \text{ is a capping of }\gamma \big\}$ is of course a topological space.} in a given equivalence class.\footnote{Strictly speaking, the family $D_{\wt\gamma}$ is parametrized by a space which forms a fibration over $\wt\gamma$ with contractible fibers corresponding to auxiliary choices such as connections and extensions of almost complex structures, but such details are irrelevant here.}

Recall \cite{Zinger_Det_line_bundle_Fredholm_ops} that the determinant line of a Fredholm operator $D$ is defined to be
$$\ddd(D) = \ddd(\ker D) \otimes \ddd(\coker D)^\vee\,,$$
where for a finite-dimensional real vector space $V$ we denote by $\ddd(V)$ its top exterior power.

The determinant line bundle $\ddd(D_{\wt\gamma})$ is well-defined and we have the following foundational lemma, proved in \cite{Zapolsky_Canonical_ors_HF}.
\begin{lemma}
The line bundle $\ddd(D_{\wt\gamma})$ is orientable if and only assumption \tb{(O)} holds. \qed
\end{lemma}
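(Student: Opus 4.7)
The plan is to compute the monodromy of $\ddd(D_{\wt\gamma})$ around loops in the space of cappings and to show it is governed precisely by the pairing of $w_2(TL)$ with the image of the boundary map $\partial \fc \pi_3(M,L) \to \pi_2(L)$. Orientability of the real line bundle $\ddd(D_{\wt\gamma})$ is equivalent to all its monodromies being trivial, so the lemma reduces to a calculation.

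First, I would identify $\pi_1$ of the space of cappings within a fixed equivalence class. Fix a reference capping $\wh\gamma_0$. Any capping $\wh\gamma'$ in the same class preglues with $-\wh\gamma_0$ to a disk $\wh\gamma' \sharp (-\wh\gamma_0) \fc (D^2,\partial D^2) \to (M,L)$ representing the trivial element of $\pi_2(M,L)$, so the space of cappings in this class is (homotopy equivalent to) the based space of null-homotopies of the trivial disk. Hence a loop $\{\wh\gamma_s\}_{s\in S^1}$ of cappings based at $\wh\gamma_0$ produces an element $\alpha \in \pi_3(M,L)$, and every class in $\pi_3(M,L)$ is realized this way (up to the irrelevant ambiguity from $\pi_2(M,L)$, which is absorbed by restricting to a single equivalence class).

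Second, I would track the boundary data of the family. Each $\wh\gamma_s$ restricted to $\partial \dot D^2$ is an arc in $L$ from $\gamma(1)$ to $\gamma(0)$, so as $s$ varies these arcs sweep out a cylinder $S^1 \times [0,1] \to L$ whose two boundary circles are constant at $\gamma(0)$ and $\gamma(1)$. Collapsing these circles produces a map $S^2 \to L$, and a direct unwinding of the $\pi_3(M,L) \to \pi_2(L)$ description shows that this sphere represents $\partial \alpha$.

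Third, I would invoke the standard families-index computation for determinant line bundles of Cauchy--Riemann type operators with totally real boundary conditions (as developed in the canonical orientation framework of \cite{Zapolsky_Canonical_ors_HF}, in the spirit of \cite{Seidel_The_Book}). After stabilization, the monodromy of $\ddd(D_{\wt\gamma})$ along the loop is controlled entirely by the boundary bundle $(\wh\gamma_s|_{\partial \dot D^2})^* TL$ over the parameter two-torus, and since the torus factors through the sphere representing $\partial \alpha$, the monodromy equals $\langle w_2(TL), \partial\alpha\rangle \in \Z_2$. Consequently, $\ddd(D_{\wt\gamma})$ is orientable iff this pairing vanishes for every $\alpha$, which is precisely assumption \tb{(O)}.

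The main obstacle is the monodromy formula $\langle w_2(TL), \partial\alpha\rangle$. The interior contributions of the Fredholm operator cancel out (by homotoping each operator, along the cylindrical end provided by $\epsilon$, to a fixed model operator which is locally constant in $s$), reducing the computation to the determinant of a Cauchy--Riemann operator on the closed surface obtained by doubling, with totally real boundary condition given by the above boundary two-torus in $L$. The classical identification of $w_2$ as the obstruction to orienting this determinant, applied to the sphere $\partial \alpha$, then yields the formula. The rest of the argument is formal bookkeeping.
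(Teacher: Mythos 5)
The paper itself contains no proof of this lemma; it is cited as proved in \cite{Zapolsky_Canonical_ors_HF}. I therefore cannot compare your argument line-by-line with the paper's, but your plan is the natural one given the framework (canonical orientations in the style of \cite{Seidel_The_Book}), and the overall structure is sound: orientability of a real line bundle is detected by monodromy; $\pi_1$ of a component of the space of cappings is $\pi_3(M,L)$ (your pregluing-with-$-\wh\gamma_0$ identification, which reduces it to the double loop space $\Omega^2(M,L)$ in the trivial component); the boundary arcs of a loop of cappings sweep out a cylinder in $L$ with constant ends, hence a sphere representing $\partial\alpha$; and the monodromy is then $\langle w_2(TL),\partial\alpha\rangle$. (It is worth noting that on $\pi_2(L)$ the ambiguity between $w_2$ and $w_2+w_1^2$ disappears, since a sphere in $L$ pulls back $w_1(TL)$ to zero.)

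Where you are imprecise is the third step. The phrase "reducing the computation to the determinant of a Cauchy--Riemann operator on the closed surface obtained by doubling, with totally real boundary condition given by the above boundary two-torus" does not quite parse: a doubled surface is closed, so there is no boundary on which to impose a totally real condition. The intended reduction is cleaner stated as follows: the complex bundle $\wh\gamma_s^*TM$ over $S^1_s\times\dot D^2$ can be trivialized (complex bundles over a homotopy-$S^1$ are trivial), so the family of Fredholm problems is homotopic to one on a fixed complex bundle with only the totally real boundary subbundle $F_s=(\wh\gamma_s|_{\partial\dot D^2})^*TL$ varying. The determinant line bundle of such a family is then governed by the boundary data, and its first Stiefel--Whitney class over $S^1_s$ is the fiber-integration (over the boundary arc) of $w_2(F)$, which — precisely because the boundary family factors through $S^2\to L$ — equals $\langle w_2(TL),\partial\alpha\rangle$. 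You also alternate between calling the boundary family a cylinder and a torus; it is a cylinder $S^1\times[0,1]$ with constant ends (for the cappings themselves) or a torus (for the preglued disks $\wh\gamma_s\sharp(-\wh\gamma_0)$), and in either case it factors through the sphere after collapsing. None of this changes the conclusion, but as written the doubling paragraph is the one place where a referee would object.
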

\noindent This allows us to define the rank $1$ free $\Z$-module $C(\wt\gamma)$ whose two generators are the two possible orientations\footnote{Note that $\wt\gamma$ is a connected topological space.} of the line bundle $\ddd(D_{\wt\gamma})$. The Lagrangian Floer complex as a $\Z$-module is
$$CF_*(H:L) = \bigoplus_{\wt\gamma \in \Crit \cA_{H:L}}C(\wt\gamma)\,.$$
It is graded by $m_{H:L}$ and its generators are assigned actions using $\cA_{H:L}$.

For $\wt\gamma_\pm \in \Crit \cA_{H:L}$ we let
\begin{multline*}
\wt\cM(H,J;\wt\gamma_-,\wt\gamma_+) = \big\{u \fc \R \times [0,1] \to M\,|\, \ol\partial_{H,J}u = 0\,, u \big(\R \times \{0,1\} \big) \subset L\,,\\ u(\pm\infty,\cdot) = \gamma_\pm\,,\wh\gamma_-\sharp u \sim \wh\gamma_+ \big\}\,,
\end{multline*}
where
$$\ol\partial_{H,J}u = \partial_s u + J_t(u) \big(\partial_t u - X_H(u)\big)\,.$$
\begin{remark}\label{rema:Floer_eq_grad_flow}
It is well-known that the Floer equation $\ol\partial_{H,J} u = 0$ can be considered as the negative gradient flow equation for the action functional. To see this, define a scalar product on
$$T_{\wt\gamma}\wt\Omega_L = C^\infty \big([0,1],\{0,1\};\gamma^*TM,(\gamma|_{\{0,1\}})^*TL \big)$$
via
$$\langle \xi,\eta \rangle = \int_0^1 \omega \big(\xi(t),J_t\eta(t) \big)\,dt\,.$$
Then the gradient of $\cA_{H:L}$ at $\wt\gamma$ writes
$$\nabla_{\wt\gamma}\cA_{H:L} = J(\gamma) \big(\dot \gamma - X_H(\gamma) \big)\,.$$
\end{remark}

For a fixed nondegenerate $H$ there is a residual subset of almost complex structures $J$ for which the linearized operator $D_u$ is surjective for any element $u \in \wt\cM(H,J;\wt\gamma_-,\wt\gamma_+)$. We call the pair $(H,J)$ a \tb{regular Floer datum} for $L$ if this is the case. Then it follows that $\wt\cM(H,J;\wt\gamma_-,\wt\gamma_+)$ is a smooth manifold of dimension $m_{H:L}(\wt\gamma_-) - m_{H:L}(\wt\gamma_+)$.

The group $\R$ acts on $\wt\cM(H,J;\wt\gamma_-,\wt\gamma_+)$ by shifts in the $\R$ variable; we let $\cM(H,J;\wt\gamma_-,\wt\gamma_+)$ be the quotient. This is a smooth manifold of dimension $m_{H:L}(\wt\gamma_-) - m_{H:L}(\wt\gamma_+) - 1$. If this dimension is $0$, then it is a compact manifold, meaning it is a finite set of points.

When $m_{H:L}(\wt\gamma_-) = m_{H:L}(\wt\gamma_+) + 1$ and $u \in \wt\cM(H,J;\wt\gamma_-,\wt\gamma_+)$, the operator $D_u$ has index $1$, is surjective, and its kernel is canonically isomorphic to $\R$ via the $\R$-action by translations. In particular, since $\ddd(D_u) = \ddd(\ker D_u) \otimes \ddd(\coker D_u)^\vee = \ddd(\R)$, we see that $D_u$ has a canonical orientation, corresponding to the positive orientation of $\R$; we denote it by $\partial_u \in \ddd(D_u)$.

We define the boundary operator
$$\partial_{H,J} \fc CF_k(H:L) \to CF_{k-1}(H:L)$$
as the unique $\Z$-linear map whose matrix element $C(\wt\gamma_-) \to C(\wt\gamma_+)$ is the map
$$\sum_{[u] \in \cM(H,J;\wt\gamma_-,\wt\gamma_+)}C(u)$$
where $C(u) \fc C(\wt\gamma_-) \to C(\wt\gamma_+)$ is an isomorphism of $\Z$-modules, defined as follows. Recall that $C(\wt\gamma_\pm)$ are generated by orientations of the line bundles $\ddd(D_{\wt\gamma_\pm})$. Consider the following string of isomorphisms
$$\ddd(D_{\wh\gamma_-}) \simeq \ddd(D_u) \otimes \ddd(D_{\wh\gamma_-}) \simeq \ddd(D_{\wh\gamma_-} \oplus D_u) \simeq \ddd(D_{\wh\gamma_+})\,,$$
where the first isomorphism maps $\mfo_- \mapsto \partial_u \otimes \mfo_-$, the second one is the direct sum isomorphism, while the third one is the composition of linear gluing and deformation isomorphisms. All these are defined in detail in \cite{Zapolsky_Canonical_ors_HF}. The resulting isomorphism of lines $\ddd(D_{\wh\gamma_-}) \simeq \ddd(D_{\wh\gamma_+})$ gives rise to the desired isomorphism of $\Z$-modules $C(\wt\gamma_-) \simeq C(\wt\gamma_+)$, which is $C(u)$. It can be seen that this isomorphism only depends on the class $[u] \in \cM(H,J;\wt\gamma_-,\wt\gamma_+)$. We have therefore defined the boundary operator $\partial_{H,J}$.
\begin{theo}[\cite{Zapolsky_Canonical_ors_HF}]
$\partial_{H,J}^2 = 0$. \qed
\end{theo}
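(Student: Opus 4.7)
The plan is to carry out the standard cobordism argument adapted to the canonical orientation framework. Fix $\wt\gamma_\pm \in \Crit\cA_{H:L}$ with $m_{H:L}(\wt\gamma_-) - m_{H:L}(\wt\gamma_+) = 2$, so that $\cM(H,J;\wt\gamma_-,\wt\gamma_+)$ is a smooth $1$-manifold. The matrix element of $\partial_{H,J}^2$ from $C(\wt\gamma_-)$ to $C(\wt\gamma_+)$ is, by definition, a signed sum over intermediate critical points $\wt\gamma_0$ with $m_{H:L}(\wt\gamma_0) = m_{H:L}(\wt\gamma_-)-1$ of compositions $C(u_2)\circ C(u_1)$ over pairs $([u_1],[u_2]) \in \cM(H,J;\wt\gamma_-,\wt\gamma_0)\times \cM(H,J;\wt\gamma_0,\wt\gamma_+)$. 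The goal is to realize this sum as the signed boundary count of a compact oriented $1$-manifold.

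First I would assemble the Gromov--Floer compactification $\ol\cM(H,J;\wt\gamma_-,\wt\gamma_+)$. The argument then proceeds in two parts. The geometric part is to rule out codimension-one bubbling: since $(M,\omega)$ is monotone, sphere bubbling is generically a codimension-two phenomenon, and since $N_L \geq 2$, any disk bubble has Maslov index $\geq 2$ and positive area, so by the standard monotonicity and dimension count disk bubbles off a strip carry codimension at least one, with the boundary-codimension-one stratum (occurring when $N_L = 2$) controlled by the fact that an attached Maslov-$2$ disk absorbs an area $\sfA$, which for fixed endpoint classes $\wt\gamma_\pm$ either forces the residual strip to live in a zero-dimensional component where such a configuration cannot exist generically, or produces pairs which cancel by a reflection argument on the bubble point. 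After this step $\ol\cM(H,J;\wt\gamma_-,\wt\gamma_+)$ is a compact topological $1$-manifold whose boundary is exactly the set of broken trajectories $([u_1],[u_2])$ above.

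The orientation part is the main obstacle and the real content. One must verify that under the homeomorphism identifying a neighborhood of a broken trajectory $([u_1],[u_2])$ in $\ol\cM$ with $[0,\varepsilon)$, the outward orientation of this boundary point, induced by the canonical orientation of each glued stable trajectory, equals the sign with which $C(u_2)\circ C(u_1)$ contributes to the matrix element above. This is a bookkeeping statement about four compatible isomorphisms of determinant lines: associativity of the direct-sum isomorphism; compatibility of linear gluing with concatenation of caps $\wh\gamma_-\sharp u_1 \sim \wh\gamma_0$ and $\wh\gamma_0\sharp u_2 \sim \wh\gamma_+$; the fact that gluing two index-$1$ surjective operators produces an index-$2$ surjective operator whose canonical top form $\partial_{u_1}\wedge\partial_{u_2}$ corresponds under the gluing isomorphism to the outward-normal-first orientation of the boundary of the glued $1$-parameter family; and finally translation invariance, which fixes the sign convention for $\partial_u$. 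All of this is set up in \cite{Zapolsky_Canonical_ors_HF} in exactly the form needed.

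Granting the gluing/orientation compatibility, the conclusion is immediate: the signed count of boundary points of any compact oriented $1$-manifold is zero, hence the matrix element of $\partial_{H,J}^2$ between any pair $C(\wt\gamma_-), C(\wt\gamma_+)$ vanishes, and so $\partial_{H,J}^2 = 0$. The expected hard step is thus not the analytic transversality/compactness, which is by now classical under the monotonicity hypothesis $N_L\geq 2$, but the sign verification: tracking the canonical orientation through the chain of deformation, direct-sum, and linear-gluing isomorphisms defining $C(u)$, and checking it agrees with the boundary orientation of the compactified moduli space.
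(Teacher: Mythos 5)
Your overall scheme is the standard cobordism argument, and you correctly identify the orientation bookkeeping through the direct-sum, linear-gluing, and deformation isomorphisms as the delicate step in the canonical-orientation framework. The genuine gap is in the $N_L = 2$ part of the disk-bubbling analysis. When a Maslov-$2$ disk $v$ bubbles off the boundary of a strip in $\cM(H,J;\wt\gamma_-,\wt\gamma_+)$ of index $2$, the residual strip has index $0$; your assertion that "such a configuration cannot exist generically" is valid only when that residual strip is nonconstant (so that dividing by $\R$ gives dimension $-1$). But the constant strip $u'(s,t) = \gamma(t)$ is always an index-$0$ solution with trivial $\R$-action, and it appears precisely when $\gamma_- = \gamma_+$ as Hamiltonian arcs with cappings differing by the class of $v$. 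Gluing the constant strip to $v$ produces an honest codimension-one stratum of the compactified one-manifold, so this case cannot be dismissed by genericity.

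Your fallback, "pairs which cancel by a reflection argument on the bubble point," is the right intuition but leaves the hard step unsubstantiated. To close the argument one must show (i) that the signed count of $J_0$-holomorphic Maslov-$2$ disks attached at $\gamma(0)$ in the relevant class equals the signed count of $J_1$-holomorphic disks attached at $\gamma(1)$ — this is an invariance statement for the pointed Maslov-$2$ disk count across the two boundary components — and (ii) that the two kinds of strata appear with opposite induced boundary orientations in the compactified moduli space. Step (ii), in the canonical-orientation framework of the cited reference, is of comparable difficulty to the gluing-orientation compatibility you rightly flag as the main content, so it deserves at least as much care. As written, your proof is incomplete precisely for $N_L = 2$, which is included in the theorem's hypotheses.
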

\noindent This allows us to define the \tb{Lagrangian Floer homology} $HF_*(H,J:L)$ as the homology of the complex $\big(CF_*(H:L),\partial_{H,J} \big)$.

There are \tb{continuation maps} in Floer homology, defined as follows. Let $(H^i,J^i)$, $i=0,1$ be regular Floer data for $L$. Consider a homotopy of Floer data $(H^s,J^s)_{s\in \R}$, that is just a smooth $s$-dependent family of Floer data, which satisfies $(H^s,J^s) = (H^0,J^0)$ for $s \leq 0$, $(H^s,J^s) = (H^1,J^1)$ for $s \geq 1$. Given $\wt\gamma_i \in \Crit \cA_{H^i:L}$, $i=0,1$, we define the solution space
\begin{multline}\label{eq:parametrized_Floer_eq}
\cM\big((H^s,J^s)_s;\wt\gamma_0,\wt\gamma_1\big) = \big \{u \fc \R \times [0,1] \to M\,|\, \ol\partial_{(H^s,J^s)_s}u = 0\,,\\ u \big(\R \times \{0,1\} \big) \subset L\,, u(-\infty,\cdot) = \gamma_0\,,u(\infty,\cdot) = \gamma_1\,, \wh\gamma_0\sharp u \sim \wh\gamma_1 \big\}\,,
\end{multline}
where
\begin{equation}\label{eq:paramd_Floer_operator}
\ol\partial_{(H^s,J^s)_s}u = \partial_s u + J^s(u) \big(\partial_t u - X_{H^s}(u) \big)\,.
\end{equation}
Solutions of $\ol\partial_{(H^s,J^s)_s}u = 0$ can alternatively be considered as the negative gradient flow lines of the $s$-dependent functional $\cA_{H^s:L}$ (see Remark \ref{rema:Floer_eq_grad_flow}).

We call the homotopy $(H^s,J^s)_s$ \tb{regular} if for all $\wt\gamma_i \in \Crit \cA_{H^i:L}$ and $u \in \cM \big((H^s,J^s)_s;\wt\gamma_0,\wt\gamma_1 \big)$, the linearized operator $D_u$ is onto. In this case the solution space \eqref{eq:parametrized_Floer_eq} is a smooth manifold of dimension $m_{H^0:L}(\wt\gamma_0) - m_{H^1:L}(\wt\gamma_1)$. When this dimension vanishes, the solution space is compact, that is a finite set of points.

The continuation map
$$\Phi_{(H^s,J^s)_s} \fc CF_*(H^0:L) \to CF_*(H^1:L)$$
is defined as the linear map with matrix coefficients $C(\wt\gamma_0) \to C(\wt\gamma_1)$, where $m_{H^0:L}(\wt\gamma_0) - m_{H^1:L}(\wt\gamma_1) = 0$. This matrix coefficient equals
$$\sum_{u \in \cM ((H^s,J^s)_s;\wt\gamma_0,\wt\gamma_1 )} C(u)\,,$$
where $C(u) \fc C(\wt\gamma_0) \to C(\wt\gamma_1)$ is an isomorphism, defined as follows. The linearized operator $D_u$ is onto and has index $0$, therefore it is an isomorphism; we let $\mfo_u \in \ddd(D_u)$ be the canonical positive orientation. The isomorphism $C(u)$ is then defined via the composition of the following string of isomorphisms
$$\ddd(D_{\wt\gamma_0}) \simeq \ddd(D_u) \otimes \ddd(D_{\wt\gamma_0}) \simeq \ddd(D_{\wt\gamma_0} \oplus D_u) \simeq \ddd(D_{\wt\gamma_1})\,,$$
the first isomorphism being $\mfo_0 \mapsto \mfo_u \otimes \mfo_0$, the second being the direct sum, and the third the composition of the linear gluing and deformation isomorphisms, similarly to the definition of the boundary operator above.

It is then proved that $\Phi_{(H^s,J^s)_s}$ is a chain map, and, considering homotopies of homotopies, that the resulting map on homology is independent of the chosen homotopy of Floer data, thereby giving rise to continuation morphisms
$$\Phi_{H^0,J^0}^{H^1,J^1} \fc HF_*(H^0,J^0:L) \to HF_*(H^1,J^1:L)\,,$$
which satisfy
$$\Phi_{H^1,J^1}^{H^2,J^2}\Phi_{H^0,J^0}^{H^1,J^1} = \Phi_{H^0,J^0}^{H^2,J^2} \qquad \mbox{and} \qquad \Phi_{H,J}^{H,J} = \id\,.$$
It follows that the continuation morphisms are isomorphisms and we thus have a direct system of graded modules $\big( HF_*(H,J:L) \big)_{(H,J)}$ indexed by regular Floer data, connected by the continuation isomorphisms. The direct limit of this system is the \tb{abstract Lagrangian Floer homology} $HF_*(L)$.

This carries a product. In order to define it, we will review the Floer PDE defined on punctured Riemann surfaces.

Recall \cite{Seidel_The_Book} that a punctured Riemann surface $\Sigma$ is defined as the complement, in a compact Riemann surface $\wh\Sigma$ with boundary, of a finite set $\Theta \subset \wh \Sigma$. Elements of $\Theta$ are called the punctures. There is the notion of an end associated to a puncture. If $\theta$ denotes a boundary puncture of $\Sigma$, an \tb{end associated to it} is a proper conformal embedding
$$\epsilon_\theta \fc [0,\infty) \times [0,1] \to \Sigma \quad \text{or} \quad \epsilon_\theta \fc (-\infty,0] \times [0,1] \to \Sigma$$
such that $\epsilon_\theta^{-1}(\partial \Sigma)$ equals $\R_\pm \times \{0,1\}$. The punctures of $\Sigma$ are given a sign and the end is defined on the positive or on the negative half-strip according to the sign. A \tb{perturbation datum} on $\Sigma$ is by definition a pair $(K,I)$, where $K$ is a $1$-form on $\Sigma$ with values in $C^\infty(M)$, for which $K|_{\partial \Sigma}$ vanishes along $L$, and $I$ is a family of compatible almost complex structures on $M$ parametrized by points of $\Sigma$. Assume we have chosen a regular Floer datum $(H^\theta,J^\theta)$ associated to every puncture $\theta$ of $\Sigma$. Then $(K,I)$ is said to be \emph{compatible with the Floer data} if for every $\theta$
$$\epsilon_\theta^*K = H^\theta_t\,dt \quad \text{and} \quad I \big(\epsilon_\theta(s,t) \big) = J^\theta_t\,.$$

Given a perturbation datum $(K,I)$ on $\Sigma$, we can define the corresponding Floer PDE for smooth maps $u \fc (\Sigma,\partial \Sigma) \to (M,L)$:
\begin{equation}\label{eq:FloerPDE}
\ol\partial_{K,I}u := (du - X_K)^{0,1} = 0\,,
\end{equation}
where $X_K$ is the $1$-form on $\Sigma$ with values in Hamiltonian vector fields on $M$ defined via $\omega \big(X_K(\xi),\cdot \big) = - dK(\xi)$ for a tangent vector $\xi \in T\Sigma$. The symbol $^{0,1}$ denotes the complex antilinear part of a $1$-form on $\Sigma$ with values in a complex vector bundle over it, in this case $u^*TM$.

Assume now $\Sigma$ has a unique positive puncture $\theta$, negative punctures $\theta_i$, and genus $0$. Pick regular Floer data $(H,J)$ and $(H^i,J^i)$ associated to $\theta,\theta_i$, respectively, and choose $\{\wt\gamma_i \in \Crit \cA_{H^i:L}\}_i$ and $\wt\gamma \in \Crit \cA_{H:L}$. We can define
\begin{multline*}
\cM \big(K,I;\{\wt\gamma_i\}_i,\wt\gamma \big) = \big\{u \fc (\Sigma,\partial\Sigma) \to (M,L)\,|\,\ol\partial_{K,I}u=0\,, \\ u\text{ asymptotic to } \gamma_i,\gamma \text{ and } \wh\gamma_1\sharp\dots\sharp u\sim \wh\gamma \big\}\,.
\end{multline*}
We call the perturbation datum $(K,I)$ \tb{regular} if for every choice of $\wt\gamma_i,\wt\gamma$ and every solution $u \in \cM \big(K,I; \{\wt\gamma_i\}_i,\wt\gamma \big)$ the linearized operator $D_u$ is surjective. This is the case for a residual set of compatible perturbation data.

With these preliminaries in place we can define the product on $HF_*(L)$. Let $\Sigma_\star$ be a punctured Riemann surface obtained from $D^2$ by removing three boundary points $\theta^i$, $i=0,1,2$, where the punctures are ordered in accordance with the boundary orientation of $\partial D^2$. We assign $\theta^0,\theta^1$ the negative sign and $\theta^2$ the positive sign. We let $(H^i,J^i)$ be regular Floer data associated to $\theta^i$. We pick a compatible regular perturbation datum $(K,I)$. For $\wt\gamma_i \in \Crit \cA_{H^i:L}$ we consider the space $\cM \big(K,I;\{\wt\gamma_i\}_i \big)$. By assumption, this is a smooth manifold, and its dimension equals $m_{H^0:L}(\wt\gamma_0) + m_{H^1:L}(\wt\gamma_1) - m_{H^2:L}(\wt\gamma_2) - n$. If this dimension is zero, this manifold is compact and therefore consists of a finite number of points.

Assume now that the $\wt\gamma_i$ satisfy $m_{H^0:L}(\wt\gamma_0) + m_{H^1:L}(\wt\gamma_1) - m_{H^2:L}(\wt\gamma_2) = n$. For $u \in \cM \big( K,I;\{\wt\gamma_i\}_i \big)$ the operator $D_u$ is surjective and has index $0$, therefore it is an isomorphism, and as such $\ddd(D_u)$ possesses the canonical positive orientation, which we denote $\mfo_u$.

Similarly to the definition of the matrix elements of the boundary operator above, we now define the matrix element
$$\sum_{u \in \cM (K,I;\{\wt\gamma\}_i )} C(u) \fc C(\wt\gamma_0) \otimes C(\wt\gamma_1) \to C(\wt\gamma_2)\,,$$
where $C(u) \fc C(\wt\gamma_0) \otimes C(\wt\gamma_1) \to C(\wt\gamma_2)$ is an isomorphism defined as follows. Consider the string of isomorphisms
$$\ddd(D_{\wt\gamma_0}) \otimes \ddd(D_{\wt\gamma_1}) \simeq \ddd(D_u) \otimes \ddd(D_{\wt\gamma_0}) \otimes \ddd(D_{\wt\gamma_1}) \simeq \ddd(D_{\wt\gamma_0} \oplus D_{\wt\gamma_1}\oplus D_u) \simeq \ddd(D_{\wt\gamma_2})$$
where the first one maps $\mfo_0\otimes \mfo_1 \mapsto \mfo_u \otimes \mfo_0 \otimes \mfo_1$, the second one is the direct sum isomorphism, and the third one is the composition of linear gluing and deformation isomorphisms. The resulting isomorphism $\ddd(D_{\wt\gamma_0}) \otimes \ddd(D_{\wt\gamma_1}) \simeq \ddd(D_{\wt\gamma_2})$ yields the isomorphism $C(u)$.

We have therefore defined a bilinear map
$$\star_{K,I} \fc CF_k(H^0:L) \otimes CF_l(H^1:L) \to CF_{k+l-n}(H^2:L)\,.$$
In \cite{Zapolsky_Canonical_ors_HF} it is shown that this map is a chain map, therefore it induces a map on homology. Using cobordism arguments, it is shown that this induced map on homology is independent of the conformal structure on $\Sigma_\star$ and of the choice of perturbation datum $(K,I)$, which means that we have a correctly defined bilinear map on homology
$$\star \fc HF_k(H^0,J^0:L) \otimes HF_l(H^1,J^1:L) \to HF_{k+l-n}(H^2,J^2:L)\,.$$
Furthermore, this map respects continuation maps, in the sense that
$$\star \circ \Phi \otimes \Phi = \Phi \circ \star\,,$$
therefore it induces a product $\star$ on the abstract Floer homology $HF_*(L)$.

It is further shown \emph{ibid.} that this product operation is associative and has a unit, thus $HF_*(L)$ is an associative unital algebra.

\subsubsection{Periodic orbit Floer homology}\label{subsec:Ham_HF}

Let
$$\Omega = \big\{x \fc S^1 \to M\,|\, [x] = 0 \in \pi_1(M) \big\}$$
be the contractible loop space of $M$. We view $S^2$ as the Riemann sphere $\C \cup \{\infty\}$, and let $\dot S^2 = S^2 \setminus \{1\}$ be the sphere with one puncture. We endow it with the standard positive end $\epsilon \fc [0,\infty) \times S^1  \to \dot S^2$ defined by
$$\epsilon(z) = \frac{e^{2\pi z} - i}{e^{2\pi z} + i}\,,$$
where we identify $S^1 = \R/\Z$. A capping of $x \in \Omega$ is a b-smooth map $\wh x \fc \dot S^2 \to M$ with $\wh x \big(\epsilon(s,t) \big) \to x(t)$ as $s\to \infty$. Similarly to \S \ref{subsubsec:Lagr_HF} we have the notion of equivalence of two cappings $\wh x,\wh x'$ of $x$, meaning that their pregluing $\wh x \sharp (- \wh x')$ is a contractible sphere; we denote this by $\wh x \sim \wh x'$. We call two pairs $(x,\wh x),(x',\wh x')$ equivalent if $x = x'$ and $\wh x \sim \wh x'$. The class of $(x,\wh x)$ is denoted by $[x,\wh x]$. Let
$$\wt\Omega = \big\{[x,\wh x]\,|\,x \in \Omega\,,\wh x\text{ a capping for }x \big\}\,.$$
This comes with the obvious projection $\wt \Omega \to \Omega$ making it into a covering space. We denote elements of $\wt \Omega$ via $\wt x$ and usually it will stand for $[x,\wh x]$.

For a Hamiltonian $H \fc S^1 \times M \to \R$ we have the action functional
$$\cA_H \fc \wt\Omega \to \R\,,\quad \cA_H \big(\wt x = [x,\wh x] \big) = \int_{S^1} H_t \big( x(t) \big)\, dt - \int_{\dot S^2}\wh x^*\omega\,.$$
Its critical point set $\Crit \cA_H$ is the set of classes $[x,\wh x]$ for which $x$ is a periodic orbit of $H$, $\dot x = X_H(x)$.

We call $H$ nondegenerate if for each $\wt x \in \Crit \cA_H$ the linearized flow $\phi_{H*} \fc T_{x(0)}M \to T_{x(0)}M$ does not have $1$ as an eigenvalue. In this case we have a well-defined Conley--Zehnder index\footnote{Similarly to the Lagrangian case we normalize it to equal the Morse index of a critical point of $H$ in case it is a $C^2$-small Morse function, and to satisfy $m_H \big([x,A\sharp \wh x] \big) = m_H \big([x,\wh x] \big) - 2c_1(A)$ for $A \in \pi_2(M,x(0))$.}
$$m_H \fc \Crit \cA_H \to \Z\,.$$
Given a time-periodic compatible almost complex structure $J$ on $M$, we can define the operator
$$D_{\wh x} \fc W^{1,p}(\dot S^2;\wh x^*TM) \to L^p(\dot S^2;\Omega^{0,1}\otimes \wh x^*TM)\,,$$
and assemble these into a bundle $D_{\wt x}$ of Fredholm operators over the space of cappings in class $\wt x$, similarly to the Lagrangian case. We have
\begin{lemma}[\cite{Zapolsky_Canonical_ors_HF}]
The determinant line bundle $\ddd(D_{\wt x})$ is orientable. \qed
\end{lemma}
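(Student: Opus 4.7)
The plan is to exhibit a canonical (up to homotopy) continuous orientation of the determinant line bundle $\ddd(D_{\wt x})$ by deforming the family $\{D_{\wh x}\}$ to a family of complex-linear Cauchy--Riemann operators, which automatically carry a preferred orientation coming from the complex structure on their kernels and cokernels. Unlike the Lagrangian case, no obstruction class is needed: in the closed-string setting everything lives in the complex category.

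First, since $(M,\omega)$ is symplectic, fix a compatible almost complex structure $J_0$, making $TM$ a complex vector bundle. For every capping $\wh x$ in the equivalence class $\wt x$, the pullback $\wh x^* TM$ inherits the structure of a complex vector bundle over $\dot S^2$, and $D_{\wh x}$ is a real Cauchy--Riemann type operator on it, i.e.\ a zeroth-order compact perturbation of the Dolbeault operator $\bar\partial_{J_0}$ with asymptotic operator prescribed by the linearized flow of $H$ along $x$. Second, I would construct, for each $\wh x$, a path of Fredholm operators from $D_{\wh x}$ to a complex-linear Cauchy--Riemann operator $D^{\C}_{\wh x}$ on $\wh x^* TM$, by linearly interpolating between the perturbation of $D_{\wh x}$ and its complex-linear part in the interior, while on the asymptotic end interpolating through a path of non-degenerate symplectic paths to a complex-linear reference asymptotic operator. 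Nondegeneracy of $H$ guarantees that the whole path stays in the Fredholm locus.

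Third, for the complex-linear operator $D^{\C}_{\wh x}$ the kernel and cokernel are \emph{complex} vector spaces, hence they carry canonical orientations as real vector spaces and $\ddd(D^{\C}_{\wh x}) = \ddd(\ker D^{\C}_{\wh x}) \otimes \ddd(\coker D^{\C}_{\wh x})^\vee$ acquires a distinguished element. Transporting this element along the path of Step~2 yields a preferred orientation of $\ddd(D_{\wh x})$ for every $\wh x$. Fourth, I would verify that the data entering Steps~2--3 (the complex-linear reference, the interpolating asymptotic paths, and the interior deformation) can be chosen to depend continuously on $\wh x$ as the latter varies in the connected space of cappings representing $\wt x$. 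The resulting continuous section of the orientation double cover of $\ddd(D_{\wt x})$ trivializes it and thus establishes orientability.

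The main obstacle is the family version in the last step: the auxiliary choices must vary coherently in $\wh x$, and the asymptotic deformations must be compatible along the positive end so that pregluing and direct-sum identifications of determinant lines respect the orientations. Because the space of cappings within a fixed class $\wt x$ is connected and the spaces of allowable auxiliary choices are contractible, one can patch local choices into a global continuous one without obstruction; this is precisely where the situation differs from the Lagrangian case, in which the relevant patching produces a $\Z/2$-obstruction controlled by $w_2(TL)$ evaluated on spheres arising from $\pi_3(M,L)\to\pi_2(L)$.
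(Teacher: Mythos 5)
Your strategy — deform each $D_{\wh x}$ to a complex-linear Cauchy--Riemann operator, use the canonical orientation that the complex structure induces on its kernel and cokernel, and transport this along the deformation — is precisely the mechanism that underlies orientability of determinant lines in the closed-string (no Lagrangian boundary) setting, and it is the approach of the cited reference; the argument is sound. Two small points worth sharpening: first, the asymptotic operator does not actually vary within a fixed class $\wt x$ (it is determined by $x$, $H$ and $J$ alone, while only the capping $\wh x$ varies), so the deformation of the asymptotic end is a single once-and-for-all choice rather than a family; second, the absence of a monodromy obstruction is not a consequence of connectedness of the capping space per se, but of the fact that projecting to the complex-linear part is a canonical, continuous fibrewise operation — this is exactly what fails for Lagrangian boundary conditions, where the boundary subspaces are totally real rather than complex and the obstruction $w_2$ appears.
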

We let $C(\wt x)$ be the free $\Z$-module of rank $1$ whose two generators are the two possible orientations of the line bundle $\ddd(D_{\wt x})$. The Floer complex of $H$ as a $\Z$-module is
$$CF_*(H) = \bigoplus_{\wt x \in \Crit \cA_H}C(\wt x)\,.$$
This is graded by $m_H$ and its generators are assigned actions given by $\cA_H$.

We can similarly define the space of solutions of the Floer PDE,
$$\wt\cM(H,J;\wt x_-,\wt x_+)$$
for $\wt x_\pm \in \Crit \cA_H$; we put $\cM(H,J;\wt x_-,\wt x_+) = \wt\cM(H,J;\wt x_-,\wt x_+)/\R$. We call $(H,J)$ a regular Floer datum if for all $\wt x_\pm$ and every solution $u$ in this space the linearized operator $D_u$ is onto. Then this solution space is a smooth manifold of dimension $m_H(\wt x_-) - m_H(\wt x_+)$. The quotient $\cM(H,J;\wt x_-,\wt x_+)$ is a compact manifold whenever it is zero-dimensional, and therefore it is a finite set of points.

Assume that $m_H(\wt x_-) - m_H(\wt x_+) = 1$ and that $u \in \wt \cM(H,J;\wt x_-,\wt x_+)$. Then $D_u$ is surjective and has index $1$. Its kernel carries a canonical orientation and therefore $D_u$ is canonically oriented; we let this orientation be denoted via $\partial_u$.

Similarly to the Lagrangian case, we have the isomorphism
$$C(u) \fc C(\wt x_-) \to C(\wt x_+)$$
for such $u$ given by applying the direct sum, linear gluing and deformation isomorphisms. The matrix element $C(\wt x_-) \to C(\wt x_+)$ of the boundary operator $\partial_{H,J} \fc CF_k(H) \to CF_{k-1}(H)$ is the sum
$$\sum_{[u] \in \cM(H,J;\wt x_-,\wt x_+)} C(u) \fc C(\wt x_-) \to C(\wt x_+)\,.$$
It is proved in \cite{Zapolsky_Canonical_ors_HF} that $\partial_{H,J}^2 = 0$ and we let $HF_*(H,J)$ be the homology of the resulting chain complex $ \big(CF_*(H),\partial_{H,J} \big)$.

Continuation maps
$$\Phi_{H^0,J^0}^{H^1,J^1} \fc HF_*(H^0,J^0) \to HF_*(H^1,J^1)$$
are defined similarly to the Lagrangian case, and it is shown that they behave well with respect to composition and that $\Phi_{H,J}^{H,J} = \id$, implying that they are isomorphisms. We let $HF_*(M)$ be the abstract Floer homology, which is the limit of the direct system of graded modules $\big( HF_*(H,J) \big)_{(H,J)}$ indexed by regular Floer data, and connected by the continuation isomorphisms.

The product
$$* \fc HF_k(H^0,J^0) \otimes HF_l(H^1,J^1) \to HF_{k+l-2n}(H^2,J^2)$$
is defined using the moduli space of solutions of the Floer PDE \eqref{eq:FloerPDE} on the surface $\Sigma_*$ obtained from $S^2$ by removing three points. It is shown to be independent of the conformal structure on $\Sigma_*$ and the perturbation datum used to define it, and moreover to behave well with respect to continuation isomorphisms, that is $* \circ \Phi \otimes \Phi = \Phi \circ *$. This means that we have a well-defined product $*$ on the abstract Floer homology $HF_*(M)$. It is further shown to be supercommutative, associative, and unital.

There is an additional algebraic structure, namely the quantum module action of $HF_*(M)$ on $HF_*(L)$. To describe it, consider the surface $\Sigma_\bullet$ obtained from $D^2$ by removing two boundary and one interior point, where the resulting interior puncture and one of the boundary ones are given the negative sign, while the remaining boundary puncture is given the positive sign. Using solutions of the Floer PDE for $\Sigma_\bullet$, we can define a bilinear map
$$\bullet \fc HF_k(H,J) \otimes HF_l(H^0,J^0:L) \to HF_{k+l-2n}(H^1,J^1:L)\,,$$
which behaves well with respect to continuation isomorphisms, thereby defining a bilinear operation $\bullet \fc  HF_*(M) \otimes HF_*(L) \to HF_*(L)$. This is shown to make $HF_*(L)$ into an algebra over the algebra $HF_*(M)$, meaning we have the structure of an $HF_*(M)$-module on $HF_*(L)$ and in addition the latter is a superalgebra over the former.\footnote{This means that we have $a \bullet (\alpha \star \beta) = (a \bullet \alpha) \star \beta = (-1)^{\text{sign}} \alpha \star (a \bullet \beta)$ with a suitable Koszul sign.}

\subsection{Quantum homology}

Here we briefly describe Lagrangian and symplectic quantum homology. It is advantageous to think of quantum homology as Morse--Bott Floer homology for the zero Hamiltonian and a time-independent almost complex structure. Lagrangian quantum homology was defined by Biran--Cornea \cite{Biran_Cornea_Lagr_QH, Biran_Cornea_Lagr_top_enumerative_invts}. We give here a different formulation.

\subsubsection{Lagrangian quantum homology}\label{subsec:Lagr_QH}

A \tb{quantum datum} for $L$ is a triple $\cD = (f,\rho,J)$ where $f \fc L \to \R$ is a Morse function, $\rho$ is a Riemannian metric on $L$ such that the pair $(f,\rho)$ is Morse--Smale, and $J$ is an $\omega$-compatible almost complex structure. The corresponding quantum complex is the $\Z$-module
$$QC_*(\cD:L) = \bigoplus_{q \in \Crit f} \bigoplus_{A \in \pi_2(M,L,q)}C(q,A)\,.$$
Here $C(q,A)$ is the rank $1$ free $\Z$-module generated by the orientations of the determinant line bundle $\ddd \big(D_A \sharp T\cS(q) \big)$, where the operator family $D_A \sharp T\cS(q)$ is defined as follows. Denote
$$C^\infty_A = \big\{u \in C^\infty(D^2,\partial D^2,1;M,L,q)\,|\,[u] = A \big\}\,.$$
Let $u \in C^\infty_A$ and let
$$D_u \fc W^{1,p} \big(D^2,S^1;u^*TM, (u|_{S^1})^*TL \big) \to L^p(D^2;\Omega^{0,1}\otimes u^*TM)$$
be the formal linearization of the Cauchy--Riemann operator at $u$. See \cite{Zapolsky_Canonical_ors_HF} for a more precise definition. The operators $D_u$ for different $u$ assemble into a Fredholm family $D_A$ over $C^\infty_A$. We let $$D_u \sharp T\cS(q)$$
be the restriction of $D_u$ to the subspace
$$\big\{\xi \in W^{1,p}(D^2,S^1;u^*TM,(u|_{S^1})^*TL)\,|\, \xi(1) \in T_q\cS(q)\big\}\,.$$
Similarly the operators $D_u \sharp T\cS(q)$ assemble into a family $D_A \sharp T\cS(q)$. We have
\begin{lemma}[\cite{Zapolsky_Canonical_ors_HF}]
The determinant line bundle $\ddd \big(D_A \sharp T\cS(q) \big)$ is orientable if and only if assumption \tb{(O)} holds. \qed
\end{lemma}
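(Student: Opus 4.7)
The plan is to reduce the orientability of $\ddd(D_A \sharp T\cS(q))$ to the orientability of the unconstrained determinant bundle $\ddd(D_A)$, and then apply the same monodromy analysis that was used in the Floer case to obtain the analogous statement for $\ddd(D_{\wt\gamma})$.

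First I would observe that the operator $D_u \sharp T\cS(q)$ is obtained from $D_u$ by restricting the domain to the codimension $\ind_f q$ subspace $\{\xi \fc \xi(1) \in T_q\cS(q)\}$. By the Morse--Smale hypothesis, $T_qL = T_q\cS(q) \oplus T_q\cU(q)$, so evaluation at $1$ followed by projection onto $T_q\cU(q)$ yields an exact sequence
\[
0 \to \ker(D_u \sharp T\cS(q)) \to \ker D_u \to T_q\cU(q) \to \coker(D_u \sharp T\cS(q)) \to \coker D_u \to 0\,.
\]
This induces a natural isomorphism of determinant lines
\[
\ddd(D_u \sharp T\cS(q)) \otimes \ddd(T_q\cU(q)) \simeq \ddd(D_u)\,,
\]
uniform in $u \in C^\infty_A$. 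Since $T_q\cU(q)$ is a fixed finite-dimensional vector space, its determinant line contributes a constant (trivially oriented) factor, so $\ddd(D_A \sharp T\cS(q))$ is orientable over $C^\infty_A$ if and only if $\ddd(D_A)$ is.

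Next I would analyze orientability of $\ddd(D_A)$ by the standard monodromy computation. A loop $\sigma \fc S^1 \to C^\infty_A$ based at $u_0$, together with the constant map at $-u_0 \fc (\dot D^2_-,\partial \dot D^2_-) \to (M,L)$, pre-glues to a family parameterized by $S^1$ of maps $(S^2,S^1)\to (M,L)$; after quotienting out $u_0 \sharp(-u_0)$ this produces a map $(D^3, S^2) \to (M,L)$, that is, a class in $\pi_3(M,L)$, whose boundary sphere $S^2 \to L$ represents $\partial[\sigma] \in \pi_2(L)$. By the very same index-theoretic calculation that establishes the foundational lemma for $\ddd(D_{\wt\gamma})$ cited from \cite{Zapolsky_Canonical_ors_HF}, the monodromy of $\ddd(D_A)$ around $\sigma$ is given by $\langle w_2(TL), \partial[\sigma]\rangle \in \Z/2$. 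Thus $\ddd(D_A)$ is orientable precisely when $w_2(TL)$ vanishes on the image of $\partial \fc \pi_3(M,L) \to \pi_2(L)$, which is exactly assumption \tb{(O)}.

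The main technical step is the monodromy computation in the previous paragraph, which requires a careful identification of the first Stiefel--Whitney class of the index bundle in terms of $w_2(TL)$. Since this computation is entirely parallel to the one already carried out for Floer trajectories and capped Hamiltonian arcs, I would invoke the argument of \cite{Zapolsky_Canonical_ors_HF} verbatim rather than reproduce it; the only new ingredient here is the reduction to the unconstrained operator handled in the first paragraph, which is a purely linear-algebraic matter and presents no genuine difficulty.
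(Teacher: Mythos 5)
The lemma you are asked to prove is cited from \cite{Zapolsky_Canonical_ors_HF} and carries an immediate \texttt{qed} mark; the present paper does not contain a proof of it, so there is no internal argument to compare your proposal against. What follows is an assessment of the proposal on its own terms.

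The first step of your argument --- the reduction from $\ddd(D_A \sharp T\cS(q))$ to the unconstrained bundle $\ddd(D_A)$ --- is correct and cleanly done. The short exact sequence of Banach spaces obtained from restricting the domain via $\xi(1) \in T_q\cS(q)$, together with the snake lemma, does yield the six-term sequence you write, hence a natural isomorphism $\ddd(D_u\sharp T\cS(q)) \otimes \ddd(T_q\cU(q)) \simeq \ddd(D_u)$ uniform in $u$. Since the extra factor is a fixed line, orientability is preserved under the reduction. This is a genuine and useful observation.

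The second step is where you invoke rather than supply the content. The monodromy formula $\langle w_2(TL), \partial[\sigma]\rangle$ for loops $\sigma$ in $C^\infty_A$ is precisely the heart of the statement, and your account of how a loop in $C^\infty_A$ yields a class in $\pi_3(M,L)$ is imprecise: the pre-gluing you describe with the ``opposite capping'' $-u_0 \fc \dot D^2_- \to M$ and the phrase ``maps $(S^2,S^1)\to(M,L)$'' is borrowed from the arc-capping setting, whereas $u_0 \in C^\infty_A$ is a closed disk map with a marked boundary point, not a capping of a Hamiltonian arc. The cleaner statement is that the boundary evaluations $\sigma(s)|_{\partial D^2}$ furnish a map $S^1 \times S^1 \to L$ sending $S^1 \times \{1\}$ to $q$, hence a class in $\pi_2\big((S^1\times S^1)/(S^1\times\{1\}),q\big)\cong\pi_2(S^2\vee S^1)\to\pi_2(L)$, and that this class is the boundary of the $\pi_3(M,L)$-class represented by $\sigma$ viewed as a map $S^1 \times (D^2,\partial D^2,1) \to (M,L,q)$. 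This is what feeds into assumption \textbf{(O)}. Your appeal to the analogous computation for $\ddd(D_{\wt\gamma})$ being ``the very same'' is plausible --- both lemmas are proved in the same reference and have identical conclusions --- but the parameter spaces (cappings of a fixed arc versus closed disks in a fixed class) and the operator domains (punctured versus unpunctured disk) differ, so this step is not a reduction to a previously established lemma, only an analogy. You are candid about leaving it as a black box, which is acceptable for a sketch, but it means the proposal proves the reduction, not the lemma itself.
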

\noindent Thus we have defined the module $C(q,A)$ and therefore also the module $QC_*(\cD:L)$. The grading on the latter module is determined by requiring the elements of $C(q,A)$ to have degree $\ind_fq - \mu(A)$.

The boundary operator $\partial_\cD \fc QC_*(\cD:L) \to QC_{*-1}(\cD:L)$ is defined using the moduli spaces of pearls defined by Biran--Cornea \cite{Biran_Cornea_Lagr_QH}. When suitable genericity conditions are satisfies (see \emph{ibid.} for their precise formulation), we call the datum $\cD$ regular. See \cite{Zapolsky_Canonical_ors_HF} for a detailed description of $\partial_\cD$ and for a proof that $\partial_\cD^2 = 0$ in the present context. We denote by $QH_*(\cD:L)$ the homology of $\big( QC_*(\cD:L),\partial_{\cD} \big)$. It is the quantum homology associated to the datum $\cD$.

There exist continuation isomorphisms relating the different $QH_*(\cD:L)$, which we define below in \S \ref{subsec:Lagr_PSS} and we let $QH_*(L)$ be the limit of the corresponding direct system of graded modules. This is the \tb{quantum homology} of $L$. We can also define an associative product $\star$ on $QH_*(L)$. It has degree $-n$.

This product admits a unit, constructed as follows. Assume $f$ has a unique maximum $q$. The module $C(q,0)$ in this case is canonically isomorphic to $\Z$. We denote by $1_q \in C(q,0)$ the element corresponding under this isomorphism to $1 \in \Z$. It can be shown that this is a cycle and that it acts as the unit for $\star$ on the chain level. It follows that its class $[L] \in QH_*(L)$ is independent of $\cD$ and that it is the unit for the product.

\subsubsection{Quantum homology of $M$}

This is defined similarly to the Lagrangian case. A quantum datum on $M$ is a triple $\cD = (f,\rho,J)$ where $(f,\rho)$ is a Morse--Smale pair on $M$ and $J$ is a compatible almost complex structure. The quantum complex as a module is
$$QC_*(\cD) = \bigoplus_{q\in \Crit f} \bigoplus_{A \in \pi_2(M,q)}C(q,A)$$
where $C(q,A)$ is the rank $1$ free $\Z$-module generated by orientations of the operator family $D_A \sharp T\cS(q)$. Here $D_A$ is the Fredholm family composed of linearized operators $D_u$ for smooth maps $u \fc S^2 \to M$ in class $A$. This family is always orientable. The family $D_A \sharp T\cS(q)$ has as representatives the operators $D_u \sharp T\cS(q)$, where $D_u \sharp T\cS(q)$ is the restriction of $D_u$ to the subspace of sections in $W^{1,p}(S^2,u^*TM)$ taking values in $T_q\cS_f(q) \subset T_qM$ at $1 \in S^2$. The family $D_A \sharp T\cS(q)$ is orientable as well and therefore we have defined the modules $C(q,A)$. Its elements by definition have degree $\ind_fq - 2c_1(A)$.

The boundary operator $\partial_\cD$ on $QC_*(\cD)$ is in fact just the Morse boundary operator, enriched by the local system $\pi_2(M)$. See \cite{Zapolsky_Canonical_ors_HF} for more details. We thus have the quantum homology $QH_*(\cD)$ as the homology of the resulting chain complex. This is shown to be independent of $\cD$ using continuation isomorphisms in Morse homology; thus we obtain the abstract quantum homology $QH_*(M)$.

It carries a supercommutative associative product $*$ with unit. There is also the bilinear map
$$\bullet \fc QH_*(M) \otimes QH_*(L) \to QH_*(L)$$
of degree $-2n$, which makes $QH_*(L)$ into a superalgebra over $QH_*(M)$ similarly to the case of Floer homology, see \S \ref{subsec:Ham_HF}.

\subsection{Piunikhin--Salamon--Schwarz isomorphisms}

These were first defined in \cite{PSS}, and consequently extended to various other settings \cite{KaticMilinkovic, Albers, Biran_Cornea_Lagr_QH, Zapolsky_Canonical_ors_HF}. They are commonly abbreviated to ``PSS'', and they relate Floer and quantum homologies.

\subsubsection{Lagrangian PSS}\label{subsec:Lagr_PSS}

Fix a regular Floer datum $(H,J)$ and a regular quantum datum $\cD$ for $L$. There are well-defined maps
$$\PSS_\cD^{H,J} \fc QH_*(\cD:L) \to HF_*(H,J:L)\,,$$
$$\PSS_{H,J}^\cD \fc HF_*(H,J:L) \to QH_*(\cD:L)\,,$$
which behave well with respect to continuation morphisms in Floer homology. In fact, the continuation morphisms in quantum homology can be defined in one of two ways. One is direct, as in \cite{Biran_Cornea_Lagr_QH} and the other is using the PSS maps. For instance if $\cD'$ is another regular quantum datum for $L$, we can define the continuation morphism
$$\Phi_\cD^{\cD'} := \PSS_{H,J}^{\cD'} \PSS_\cD^{H,J} \fc QH_*(\cD:L) \to QH_*(\cD':L)\,,$$
and since the PSS maps respect Floer continuation maps, this is independent of the Floer datum $(H,J)$. We will use the following naturally induced isomorphism:
\begin{equation}\label{eq:PSS_isomorphism}
\PSS^{H,J} \fc QH_*(L) \to HF_*(H,J:L)\,.
\end{equation}

We can also show that the PSS maps are unital algebra isomorphisms. In particular, if $(H^i,J^i)$, $i=0,1,2$, are regular Floer data, then the following diagram commutes:
$$\xymatrix{QH_*(L) \ar[r]^-{\star} \ar[d]^{\PSS^{H^0,J^0} \otimes \PSS^{H^1,J^1}} \otimes QH_*(L) & QH_*(L) \ar[d]^{\PSS^{H^2,J^2}} \\ HF_*(H^0,J^0:L) \otimes HF_*(H^1,J^1:L) \ar[r]^-{\star} & HF_*(H^2,J^2:L)}$$

\subsubsection{PSS isomorphism between periodic orbit Floer homology and quantum homology}

Analogously we can define the PSS isomorphisms between the periodic orbit Floer homology $HF_*(H,J)$ for a regular Floer datum $(H,J)$ and the quantum homology $QH_*(\cD)$ for a regular quantum datum $\cD$:
$$\PSS \fc QH_*(\cD) \to HF_*(H,J)\quad \text{and} \quad \PSS \fc HF_*(H,J) \to QH_*(\cD)\,.$$
These similarly respect the Floer continuation maps, and therefore induce the isomorphisms
$$\PSS^{H,J} \fc QH_*(M) \to HF_*(H,J)\quad \text{and} \quad \PSS_{H,J} \fc HF_*(H,J) \to QH_*(M)\,.$$
The induced isomorphism $QH_*(M) \simeq HF_*(M)$ is a graded algebra isomorphism. Moreover, the superalgebra structures of $QH_*(L)$ over $QH_*(M)$ and of $HF_*(L)$ over $HF_*(M)$ are intertwined by the PSS isomorphisms.

\subsection{Duality}\label{subsec:duality}

We first describe duality in Lagrangian Floer homology. To this end we need to introduce Lagrangian Floer cohomology and twisted coefficients. Let $(H,J)$ be a regular Floer datum for $L$. We let the corresponding Lagrangian Floer cochain complex be
$$CF^*(H,J:L) = \bigoplus_{\wt \gamma \in \Crit \cA_{H:L}} C(\wt\gamma)^\vee$$
where for a $\Z$-module $Q$ we let $Q^\vee = \Hom_\Z(Q,\Z)$ be the dual module. Note that if $Q$ is free and of rank $1$, there is a natural isomorphism $Q^\vee = Q$ given by sending a generator $q^\vee$ of $Q^\vee$ to the generator $q$ of $Q$ with $q^\vee(q) = 1$.

The differential $\partial^\vee_{H,J}$ is defined as the operator whose matrix element
$$C(\wt\gamma_+)^\vee \to C(\wt\gamma_-)^\vee$$
is dual to the matrix element $C(\wt\gamma_-) \to C(\wt\gamma_+)$ of $\partial_{H,J}$. We let the differential
$$\delta_{H,J} \fc CF^k(H,J:L) \to CF^{k+1}(H,J:L)$$
be defined as $\delta_{H,J} = (-1)^{k-1}\partial_{H,J}^\vee$.

Clearly we have $\delta_{H,J}^2 = 0$ and we let $HF^*(H,J:L)$ be the cohomology of the resulting cochain complex $\big(CF^*(H,J:L),\delta_{H,J}\big)$.

Another modification we need is twisted coefficients. Assume $\cL$ is a $\Z$-bundle over $\wt\Omega_L$ and for a path $u$ in $\wt\Omega_L$ from $\wt\gamma_-$ to $\wt\gamma_+$ we let $\cP_u \fc \cL_{\wt\gamma_-} \to \cL_{\wt\gamma_+}$ be the parallel transport map. Let
$$CF_*(H,J:L;\cL) = \bigoplus_{\wt\gamma \in \Crit \cA_{H:L}}C(\wt\gamma) \otimes \cL_{\wt\gamma}\,.$$
For $\wt\gamma_\pm \in \Crit \cA_{H:L}$ with $|\wt\gamma_-| = |\wt\gamma_+| - 1$ we define the homomorphism
$$\sum_{[u] \in \cM(H,J;\wt\gamma_-,\wt\gamma_+)}C(u) \otimes \cP_u \fc C(\wt\gamma_-) \to C(\wt\gamma_+)$$
where we use the lift of $u$ to $\wt\Omega_L$ in the parallel transport map $\cP_u$. We let $\partial_{H,J} \otimes \cP$ be the operator whose matrix elements are these homomorphisms. It is straightforward to show that $(\partial_{H,J}\otimes \cP)^2=0$ and we let $HF_*(H,J:L;\cL)$ be the resulting homology. Continuation maps carry over to yield the abstract twisted Lagrangian Floer homology $HF_*(L;\cL)$. Dualizing, we can define the twisted Floer cochain complex $CF^*(H:L;\cL)$ and the corresponding twisted cohomology $HF^*(H,J:L;\cL)$.

For the remainder of this section we let $\cL$ be the $\Z$-bundle over $\wt\Omega_L$ with fiber $|\ddd(T_{\gamma(1)}L)|$ over $\wt\gamma \in \wt\Omega_L$, where for a real line $V$ we let $|V|$ be the free $\Z$-module of rank $1$ whose two generators are the two orientations of $V$.

We now define the dual datum $(\ol H, \ol J)$ as follows:
$$\ol H_t(x) = -H_{1-t}(x)\,, \quad \ol J_t(x) = J_{1-t}(x)\,.$$
It is also regular. There is a bijection
$$\Crit \cA_{H:L} \leftrightarrow \Crit \cA_{\ol H:L}\,,\quad \wt\gamma = [\gamma,\wh \gamma] \mapsto \ol{\wt\gamma} = [\ol \gamma,\ol{\wh\gamma}]\,,$$
where $\ol\gamma(t) = \gamma(1-t)$ and $\ol{\wh\gamma}(\sigma,\tau) = \wh\gamma(\sigma,-\tau)$. We have
$$\cA_{H:L}(\wt\gamma) = -\cA_{\ol H:L}(\ol{\wt\gamma})\,\quad \text{and} \quad m_{H:L}(\wt\gamma) = n - m_{\ol H:L}(\ol{\wt\gamma})\,.$$
Likewise there are natural diffeomorphisms
$$\wt\cM(H,J;\wt\gamma_-,\wt\gamma_+) \leftrightarrow \wt\cM(\ol H,\ol J;\ol{\wt\gamma}_+,\ol{\wt\gamma}_-)\,,\quad u \mapsto \ol u(s,t) = u(-s, 1-t)\,.$$
It is proved in \cite{Zapolsky_Canonical_ors_HF} that for any $\wt\gamma \in \Crit \cA_{H:L}$ there is a natural isomorphism
$$C(\ol{\wt\gamma}) = [C(\wt\gamma) \otimes \cL_{\wt\gamma}]^\vee\,,$$
leading to graded module isomorphisms
\begin{equation}\label{eq:Dual_module_isomorphism}
CF_*(\ol H:L) = CF^{n-*}(H,L;\cL)\,.
\end{equation}
\emph{Ibid.}, it is proved that the following diagram commutes for every $\wt\gamma_\pm \in \Crit \cA_{H:L}$ of index difference $1$ and any $u \in \wt\cM(H,J;\wt\gamma_-,\wt\gamma_+)$:
$$\xymatrix{C(\ol{\wt\gamma}_+)  \ar@{=}[r] \ar[d]^{C(\ol u)} &  [C(\wt\gamma_+) \otimes \cL_{\wt\gamma_+}]^\vee \ar[d]^{(-1)^{n-m_{H:L}(\wt\gamma_-)}(C(u) \otimes \cP_u)^\vee}  \\  C(\ol{\wt\gamma}_-) \ar@{=}[r] &  [C(\wt\gamma_-) \otimes \cL_{\wt\gamma_-}]^\vee } $$
It follows that the identification of modules \eqref{eq:Dual_module_isomorphism} is in fact an identification of complexes, leading to a canonical isomorphism
$$HF_*(\ol H,\ol J:L) = HF^{n-*}(H,J:L;\cL)\,,$$
which we refer to as \tb{the duality isomorphism}. Continuation isomorphisms are intertwined by this identification and therefore we have a canonical isomorphism of abstract homologies $HF_*(L) = HF^{n-*}(L;\cL)$.

Duality in periodic orbit Floer homology is a similarly defined canonical identification $HF_*(M) = HF^{2n-*}(M)$. Here there is no need to twist by the orientation bundle of $M$ because it carries the canonical orientation coming from the symplectic form.

We also have duality isomorphisms in quantum homologies of $L$ and $M$, which take the form of canonical identifications
\begin{equation}\label{eq:duality_isomorphism_QH}
QH_*(L) = QH^{n-*}(L;\cL)\quad \text{and}\quad QH_*(M) = QH^{2n-*}(M)\,.
\end{equation}

The PSS maps can be defined between quantum and Floer cohomologies and homologies with twisted coefficients, and they intertwine these canonical identifications.

Finally we note that there are obvious degree-preserving homomorphisms
$$HF^*(H,J:L) \to HF_*(H,J:L)^\vee\,\quad \text{and} \quad QH^*(L) \to QH_*(L)^\vee\,,$$
coming from the duality pairing; similar homomorphisms exist for the twisted versions. This observation, together with the duality isomorphism above leads to the following Poincar\'e duality pairing:
\begin{equation}\label{eq:Poincare_duality}
\langle \cdot, \cdot \rangle \fc QH_*(L) \otimes QH_{n-*}(L;\cL) \to \Z\,.
\end{equation}

\subsection{Change of coefficients, quotient complexes, Novikov rings}

Here we describe various algebraic operations on the aforementioned complexes.

\subsubsection{Coefficients in a ring of characteristic $2$}

Regardless of whether the assumption \tb{(O)} is satisfied, we can define the Lagrangian Floer and quantum homology of $L$ with coefficients in an arbitrary ring $R$ of characteristic $2$. If $(H,J)$ is a regular Floer datum for $L$, we put
$$CF_*(H:L;R) = \bigoplus_{\wt\gamma \in \Crit \cA_{H:L}}R\cdot \wt\gamma\,,$$
and the boundary operator has matrix elements
$$\#_2 \cM(H,J;\wt\gamma_-,\wt\gamma_+) \fc R\cdot \wt\gamma_- \to R\cdot \wt\gamma_+$$
for $\wt\gamma_\pm \in \Crit \cA_{H:L}$ of index difference $1$. Quantum homology is similarly defined, and all of the above constructions go through. The exception is the duality isomorphism, which takes the simplified form
$$HF_*(L;R) = HF^{n-*}(L;R)\,,$$
since of course there is no need to twist by the orientation bundle of $L$.

\subsubsection{Change of coefficients}

The above complexes are all defined over $\Z$. One can tensor them with an arbitrary ring $R$ to obtain complexes over it. Another possibility is to use twisted coefficients. Since Floer homology is a kind of Morse homology on the path spaces, we can take a flat bundle of $R$-modules (which is just a local system of $R$-modules, see \S \ref{subsubsec:qt_complexes} below) over the suitable path space and use its parallel transport maps to twist the Floer boundary operator. We already used it above in the definition of Floer (co)homology twisted by the orientation bundle of $L$.

\subsubsection{Quotient complexes}\label{subsubsec:qt_complexes}

The reader will have noticed that our Floer and quantum complexes all distinguish between homotopy classes of cappings rather than the more usual equivalence relations in Floer theory, such as equivalence in homology or having equal area. The reason for this is that the complexes can then be defined without any additional choices beyond the Floer or quantum data. In applications, however, it is often necessary to work with weaker equivalence relations for cappings, and this leads to the need of forming quotient complexes. This process is summarized in the current subsection. Even though we use $\Z$ coefficients below, everything goes through over any ring.

We first describe the case of periodic orbit Floer homology and the quantum homology of $M$. Quotient complexes are formed using local systems. Recall that a local system $G$ on a topological space $X$ is a functor from the fundamental groupoid of $X$ to the category of groups. In less abstract terms, we are given a group $G_x$ for every point $x \in X$, and for every homotopy class of paths from $x$ to $y$, a group isomorphism $G_x \simeq G_y$. A subsystem $G' < G$ is given by a subgroup $G'_x < G_x$ for every $x$, such that the isomorphisms $G_x \simeq G_y$ map $G_x'$ to $G_y'$.

The fundamental local system we need to use is $\pi_2(M)$, whose value at $q \in M$ is $\pi_2(M,q)$ and where for $q,q' \in M$ and a homotopy class of paths from $q$ to $q'$ the isomorphism $\pi_2(M,q) \simeq \pi_2(M,q')$ is given by the standard action of paths on homotopy groups \cite{Hatcher_AG}. It acts on $\wt\Omega$, in the following sense. Fix $q \in M$ and let $\wt\Omega_q = \{\wt x\,|\, x(0) = q\}$. Then $\pi_2(M,q)$ acts on $\wt\Omega_q$ by appending spheres to cappings.

In order to form a quotient complex for which spectral invariants still make sense, we need to use the local system $\pi_2^0(M)$, which is the subsystem of $\pi_2(M)$ whose value at $q$ is the group $\pi_2^0(M,q) := \ker \omega$. Fix a subsystem $G < \pi_2^0(M)$. The action of $\pi_2(M)$ on $\wt \Omega$ restricts to an action of $G$. We let $\wt\Omega/G$ be the quotient space. It forms a covering space $\wt\Omega /G \to \Omega$. Note that the fiber of this projection over $x \in \Omega$ consists of the set of equivalence classes cappings for $x$, where two cappings are equivalent if and only if their difference defines an element of $G_{x(0)}$.

The action functional of a Hamiltonian $H$ on $M$, $\cA_H \fc \wt\Omega \to \R$, descends to $\cA_H^G \fc \wt\Omega/G \to \R$. We see that $\Crit \cA_H^G = \Crit\cA_H/G$ in an obvious sense. In \cite{Zapolsky_Canonical_ors_HF} it is described how the quotient map $\Crit \cA_H \to \Crit \cA_H^G$ leads to a map of complexes
$$\big(CF_*(H),\partial_{H,J} \big) \to \big(CF_*^G(H),\partial_{H,J}^G\big)\,,$$
where
$$CF_*^G(H) = \bigoplus_{\wt x^G \in \Crit \cA_H^G}C \big(\wt x^G \big)\,,$$
with $C \big( \wt x^G \big)$ being the limit of the direct system of modules $C(\wt x)$ for $\wt x$ ranging in the class $\wt x^G \in \wt\Omega/G$, and connected by canonical isomorphisms (\emph{ibid.}). The boundary operator $\partial_{H,J}^G$ is well-defined by the requirement that the above quotient map be a chain map. Thus there is an induced graded morphism $HF_*(H,J) \to HF_*^G(H,J)$. These respect continuation morphisms and thus lead to a canonical morphism $HF_*(M) \to HF_*^G(M)$.

It is also shown \emph{ibid.} that the product structure on the Floer complexes descends to a product structure on the quotient modules. The morphism $HF_*(M) \to HF_*^G(M)$ is a morphism of algebras.

We next discuss how the Novikov ring appears in this formulation. In general the local system $\pi_2(M)$ is not constant, meaning the fundamental group $\pi_1(M,q)$ acts on $\pi_2(M,q)$ nontrivially, and this prohibits the existence of a module structure over ``the group ring'' $\Z[\pi_2(M)]$, since $\pi_2(M)$ is not a group, but rather just a groupoid. However in certain cases there is such a structure. Namely, in case $F$ is a constant local system over $M$ and $\phi \fc \pi_2(M) \to F$ is a surjective morphism of local systems such that $G = \ker \phi$, the quotient complex $CF_*^G(H)$ inherits the structure of a module over the group ring $\Z[F]$, and the corresponding homology $HF_*^G(M)$ the structure of an algebra over $\Z[F]$.

Two basic examples of this are as follows. If $F = \Z$ and $\phi(A) = c_1(A)/N_M$, where $N_M$ is the minimal Chern number of $M$, then $G = \pi_2^0(M)$ and the complex $CF_*^{\pi_2^0(M)}(H)$ inherits the structure of a module over the group ring $\Z[\Z] = \Z[t,t^{-1}]$, where $t$ is a formal variable of degree $-2N_M$. The reader will identify this as the Novikov ring, familiar in Floer homology.

Another example is $F = H_2^S(M) = \im \big(\pi_2(M) \to H_2(M;\Z) \big)$ with $\phi$ being the Hurewicz morphism. In this case $G$ is its kernel, and the complex $CF_*^G(H)$ carries the structure of a module over the group ring $\Z[H_2^S(M)]$.

In \cite{Zapolsky_Canonical_ors_HF} there is a description of a completely analogous construction of quotient complexes in quantum homology. The result is the canonical quantum homology $QH_*^G(M)$. Taking quotient complexes respects the PSS isomorphisms, therefore we have canonically $QH_*^G(M) = HF_*^G(M)$, which is an algebra isomorphism; in case $F$ is a constant local system and $G$ is the kernel of a surjective morphism of local systems $\phi \fc \pi_2(M) \to F$, it intertwines the resulting structures of algebras over $\Z[F]$.

We have a similar situation concerning the Lagrangian case, but there are necessary additional structures and assumptions due to the lack of canonical orientations. We briefly describe this, referring the reader to \cite{Zapolsky_Canonical_ors_HF} for details.

Analogously to the absolute case, we need to use local systems on $L$. We have the local system $\pi_2(M,L)$ and its subsystem $\pi_2^0(M,L) = \ker \omega$. There is an action of $\pi_2(M,L)$ on $\wt\Omega_L$. Given a subsystem $G < \pi_2^0(M,L)$, we let $\wt\Omega_L/G$ be the quotient of the induced action of $G$.

Assume $(H,J)$ is a regular Floer datum for $L$. The action functional $\cA_{H:L} \fc \wt\Omega_L \to \R$ descends to $\cA_{H:L}^G \fc \wt\Omega_L/G \to \R$ and we have the obvious quotient map $\Crit \cA_{H:L} \to \Crit \cA_{H:L}^G$. However, this does not immediately lead to the formation of a quotient complex, as we have to identify the modules $C(\wt\gamma)$ for $\wt \gamma$ ranging in a class $\wt\gamma^G \in \Crit \cA_{H:L}^G$, which in general is impossible.

A sufficient condition for this to be possible is to assume that $L$ is relatively $\Pin^\pm$ (Definition \ref{def:relatively_Pin}). If this holds, one can define the notion of a relative $\Pin^\pm$ structure on $L$ \cite{Zapolsky_Canonical_ors_HF}. A choice of such a relative $\Pin^\pm$ structure on $L$ determines a system of orientations on the determinant line bundles\footnote{The operator family $D_A \sharp 0$ is defined as follows. Its representative over $u \in C^\infty_A$ is the restriction of $D_u$ to the subspace of $W^{1,p}\big( D^2,S^1;u^*TM,(u|_{S^1})^*TL \big)$ consisting of sections which vanish at $1$.} $\ddd(D_A\sharp 0)$, which is sufficient to make the necessary identifications of the modules $C(\wt\gamma)$ for $\wt\gamma$ running over a class $\wt\gamma^G \in \wt\Omega_L^G$, see \emph{ibid.}

Fixing a relative $\Pin^\pm$ structure on $L$ we obtain the quotient complex $\big( CF_*^G(H:L),\partial_{H,J}^G \big)$ and the quotient chain map
$$\big(CF_*(H:L),\partial_{H,J} \big) \to \big(CF_*^G(H:L),\partial_{H,J}^G \big)\,.$$
This quotient map determines a unique bilinear map giving the abstract homology $HF_*^G(L)$ the structure of an associative unital algebra for which the natural map $HF_*(L) \to HF_*^G(L)$ is an algebra morphism.

The choice of a relative $\Pin^\pm$ structure yields canonical isomorphisms
$$\ddd(D_u\sharp 0) \simeq \ddd \big(T_{u(1)}L \big)^{\otimes \mu(u)}$$
for smooth maps $u \fc (D^2,S^1) \to (M,L)$. Therefore in order to be able to orient the whole collection of operators $D_u\sharp 0$ with varying $u$, we need to restrict ourselves to the space of smooth maps $u$ with $w_1(u) = 0$, where $w_1 \fc \pi_2(M,L) \to \Z_2$ is the natural morphism of local systems defined by pulling back the first Stiefel--Whitney class $w_1(TL)$ by the boundary homomorphism $\pi_2(M,L) \to \pi_1(L)$. 

A module structure over a Novikov ring is similar to the absolute case. If $F$ is a constant local system on $L$ and $\phi \fc \ker w_1 \to F$ is a surjective morphism of local systems with $G = \ker \phi$, using the relative $\Pin^\pm$ structure one can endow $CF_*^G(H,L)$ with the structure of a module over the group ring $\Z[F]$. Again, there are two basic examples, $\phi \fc  \ker w_1 \to \Z$, $\phi = \mu/\lcm(2,N_L)$, in which case one recovers the structure of a module over the Novikov ring $\Z[\Z] = \Z[t,t^{-1}]$. The other example is the relative Hurewicz morphism $\phi \fc \ker w_1 \to H_2^{D,\text{or}}(M,L)$, where $H_2^{D,\text{or}}(M,L) = \ker \big(H_2^D(M,L) \xrightarrow{w_1} \Z_2\big)$ and $H_2^D(M,L) = \im \big( \pi_2(M,L) \to H_2(M,L;\Z) \big)$ is the image of the relative Hurewicz morphism. In this case $CF_*^G(H:L)$ acquires the structure of a module over $\Z[H_2^{D,\text{or}}(M,L)]$. Note that in general only the ``orientable'' part of $H_2^D(M,L)$ can act on the quotient Floer complex.

Finally we note that the quantum module structure
$$QH_*(M) \otimes QH_*(L) \to QH_*(L)$$
descends to a well-defined module operation
$$QH_*^{G_M}(M) \otimes QH_*^{G_L}(L) \to QH_*^{G_L}(L)$$
provided $G_M < \pi_2^0(M)$ and $G_L < \pi_2^0(M,L)$ are local systems such that $QH_*^{G_L}(L)$ is well-defined (by the above, this is the case if a relative $\Pin^\pm$ structure on $L$ has been chosen), and such that for any $q \in L$ the natural morphism $\pi_2^0(M,q) \to \pi_2^0(M,L,q)$ maps $G_{M,q}$ into $G_{L,q}$. In this case the PSS morphisms intertwine this module structure with the module structure on the quotient Floer homologies. Moreover, if $G_M,G_L$ are kernels of surjective local system morphisms onto constant systems $F_M,F_L$ where the following diagram commutes:
$$\xymatrix{\pi_2^0(M) \ar[r] \ar[d] & \pi_2^0(M,L) \ar[d] \\ F_M \ar[r] & F_L}$$
then by the above $QH_*^{G_M}(M)$ inherits the structure of a $\Z[F_M]$-module, $QH_*^{G_L}(L)$ the structure of a $\Z[F_L]$-module and the induced structures of a $\Z[F_M]$-module on $QH_*^{G_L}(L)$ coincide.

\subsection{The Lagrangian diagonal in $\big( M\times M,\omega \oplus (-\omega) \big)$}\label{subsec:Lagr_diagonal}

Recall that the diagonal $\Delta \subset M \times M$ is a Lagrangian submanifold. It is monotone if and only if $M$ is a monotone symplectic manifold and, in this case, the minimal Maslov number of $\Delta$ coincides with twice the minimal Chern number of $M$. In this subsection we establish a relation between Lagrangian quantum and Floer homology of $\Delta$ and the quantum and Floer homology of $M$.

Let $(H,J)$ be a time-periodic regular Floer datum on $M$. We define a Floer datum $(\wh H,\wh J)$ for $\Delta \subset M \times M$ as follows. First define
$$H^1_t = H_t\,, \quad H^2_t = H_{1-t}\,,\quad J^1_t = J_t\,,\quad J^2_t = -J_{1-t}\quad \text{for }t\in [0,\tfrac 1 2]\,,$$
and put
$$\wh H_t(x,y) = H^1_t(x) + H^2_t(y)\,,\quad \wh J_t(x,y) = J^1_t(x) \oplus J^2_t(y)\,.$$
Then $(\wh H,\wh J)$ is a regular Floer datum\footnote{Note that this datum is defined for $t \in [0,\tfrac 1 2]$. We leave it to the reader to make the necessary adjustments of the Floer theories.} for the diagonal $\Delta$.

We claim that there is a canonical isomorphism of chain complexes
\begin{equation}\label{eq:isomorphism_HF_diagonal}
\big( CF_*(H),\partial_{H,J} \big) = \big(CF_*(\wh H:\Delta),\partial_{\wh H, \wh J} \big)\,,
\end{equation}
preserving the grading and the actions of the generators.

We start with the Hamiltonian orbits. A periodic orbit $\gamma$ of $H$ on $M$ gives rise to a Hamiltonian arc $\Gamma \fc [0,\tfrac 1 2] \to M \times M$ of $\wh H$ with endpoints on $\Delta$, namely
$$\Gamma(t) = \big(\gamma_1(t),\gamma_2(t) \big)\quad \text{where} \quad \gamma_1(t) = \gamma(t)\,,\quad \gamma_2(t) = \gamma(1-t) \,.$$
This defines a bijection between the periodic orbits of $H$ and arcs of $\wh H$ with endpoints on $\Delta$.

If $\wh \gamma$ is a capping of an orbit $\gamma$ of $H$, we can define a capping $\wh \Gamma$ of $\Gamma$, as follows. The capping $\wh \gamma \fc \dot S^2 \to M$ is defined on the punctured Riemann sphere $\dot S^2 = \wh \C \setminus \{1\}$ where $\wh\C = \C \cup \{\infty\}$. There is a conformal embedding $\R \times \R/\Z \to \dot S^2$ given by $z = s+it \mapsto \frac{e^{2\pi z} - i}{e^{2\pi z} + i}$; it is onto $S^2 \setminus \{\pm1\}$. Relative to the coordinates $(s,t)$ induced by this embedding we can define the maps
$$\wh\gamma_1(s,t) = \wh\gamma(s,t)\,,\quad \wh\gamma_2(s,t) = \wh\gamma(s,1-t) \quad \text{for }t \in [0,\tfrac 1 2]\,.$$
The same conformal embedding restricted to $\R \times [0,\tfrac 1 2]$ is onto the punctured disk $D^2 \setminus \{\pm1\}$. Composing the inverse of this embedding with the maps $\wh\gamma_{1,2}$ we get maps $\wh\gamma_{1,2} \fc \dot D^2 \to M$, where we abuse notation slightly. Putting
$$\wh\Gamma(z) = \big(\wh\gamma_1(z),\wh\gamma_2(z) \big)$$
we see that $\wh\Gamma$ is asymptotic to $\Gamma$ at the puncture and that it maps the boundary to $\Delta$. The map thus defined
$$\Crit \cA_{H} \to \Crit \cA_{\wh H:\Delta}\,,\quad \wt\gamma = [\gamma,\wh\gamma] \mapsto \wt\Gamma = [\Gamma,\wh\Gamma]$$
is a bijection, which moreover preserves the action values and the grading.

Recall that
$$CF_*(H) = \bigoplus_{\wt\gamma \in \Crit \cA_H}C(\wt\gamma) \quad \text{and} \quad CF_*(\wh H:\Delta) = \bigoplus_{\wt\Gamma \in \Crit \cA_{\wh H:\Delta}}C(\wt\Gamma)\,.$$
We will now establish a canonical isomorphism $C(\wt\gamma) = C(\wt\Gamma)$ where $\wt\gamma,\wt\Gamma$ correspond to each other by the above bijection. Consider a capping $\wh\gamma$ of $\gamma$ and the corresponding Fredholm operator
$$D_{\wh\gamma} \fc W^{1,p}(\dot S^2,\wh\gamma^*TM) \to L^p(\dot S^2, \Omega^{0,1}\otimes \wh\gamma^*TM)\,.$$
We have seen that $\wh\gamma$ gives rise to two maps $\wh\gamma_{1,2} \fc \dot D^2 \to M$. Similarly, restricting sections of the bundles $\wh\gamma^*TM$ and $\Omega^{0,1} \otimes \wh\gamma^*TM$ to the disks produces pairs of sections which agree on the boundary. This gives rise to natural isomorphisms
$$W^{1,p}(\dot S^2,\wh\gamma^*TM) \simeq W^{1,p} \big(\dot D^2, \partial \dot D^2; \wh\Gamma^*T(M \times M),(\wh\Gamma|_{\partial \dot D^2})^*T\Delta \big)$$
$$L^p(\dot S^2, \Omega^{0,1}\otimes \wh\gamma^*TM) \simeq L^p \big(\dot D^2,\Omega^{0,1} \otimes \wh\Gamma^*T(M\times M) \big)\,.$$
These isomorphisms intertwine the operators $D_{\wh\gamma}$ and $D_{\wh\Gamma}$, thereby inducing a Fredholm isomorphism between them (see \S 2 of \cite{Zapolsky_Canonical_ors_HF}). It follows that the determinant lines of these two operators are canonically isomorphic, which means that we have obtained the desired canonical isomorphism $C(\wt\gamma) = C(\wt\Gamma)$, and consequently the isomorphism \eqref{eq:isomorphism_HF_diagonal} as modules.

We now must show that this module isomorphism is a chain isomorphism. Given a Floer cylinder $u \fc \R \times S^1 \to M$ of $(H,J)$ asymptotic to orbits $\wt\gamma_\pm = \lim_{s \to \pm \infty} u(s,\cdot)$ we can define two maps $u_i \fc \R \times [0,\tfrac 1 2] \to M$, $i=1,2$, via
$$u_1(s,t) = u(s,t)\,,\quad u_2(s,t) = u(s,1-t)\,.$$
Putting $U(z) = \big(u_1(z),u_2(z) \big)$, we obtain a map $U \fc \R\times [0,\tfrac 1 2] \to M \times M$ sending the boundary to $\Delta$. It can be checked that $\partial_{\wh H,\wh J}U = 0$ and that $U$ is asymptotic to $\Gamma_\pm$. This defines a map
$$\wt\cM(H,J;\wt\gamma_-,\wt\gamma_+) \to \wt\cM(\wh H,\wh J;\wt\Gamma_-,\wt\Gamma_+)\,.$$
It can be seen that this map is a bijection. Indeed, given a Floer strip $U$ for $(\wh H, \wh J)$, we can define a continuous map $u \fc \R \times [0,1] \to M$ by gluing the two components of $U$; the resulting map solves the Floer PDE for $(H,J)$ away from the seams. It can also be seen that $u$ is in fact $C^1$ along the seams. It then follows that it is smooth by elliptic regularity.

It is then straightforward to show that the following diagram commutes:
$$\xymatrix{C(\wt\gamma_-) \ar@{=}[d] \ar[r]^{C(u)} & C(\wt\gamma_+) \ar@{=}[d] \\ C(\wt\Gamma_-) \ar[r]^{C(U)} & C(\wt\Gamma_+)}$$
using the gluing and deformation isomorphisms. This shows that the isomorphism \eqref{eq:isomorphism_HF_diagonal} is indeed an isomorphism of chain complexes.

It can also be shown that the above procedure can be adapted to produce a canonical isomorphism $QH_*(M) = QH_*(\Delta)$. Moreover, the PSS maps on both sides are respected by these isomorphisms, in particular we have the following commutative diagram:
$$\xymatrix{QH_*(M) \ar@{=}[r] \ar[d]^{\PSS} & QH_*(\Delta) \ar[d]^{\PSS} \\ HF_*(H,J) \ar@{=}[r] & HF_*(\wh H, \wh J:\Delta)}$$

\subsection{Lagrangian Floer and quantum homology of products}\label{subsec:Lagr_HF_QH_products}

Here we establish a relation between the Floer and quantum homologies of a pair of Lagrangians $L_1,L_2$ with their product.

Let $(M_i,\omega_i)$ be symplectic manifolds and $L_i \subset M_i$ Lagrangian submanifolds, $i=1,2$, so that the product $L_1 \times L_2$ is a monotone Lagrangian of minimal Maslov number at least $2$. This implies that both $L_1$ and $L_2$ are monotone and have minimal Maslov numbers at least $2$. Pick regular Floer data $(H^i,J^i)_i$ for $L_i$. We claim that there is a canonical chain isomorphism
\begin{equation}\label{eq:isomorphism_products}
CF_*(H^1:L_1) \otimes CF_*(H^2:L_2) = CF_*(H^1 \oplus H^2 : L_1 \times L_2)\,.
\end{equation}
It is clear that there is a natural bijection
$$\Crit \cA_{H^1:L_1} \times \Crit \cA_{H^2:L_2} = \Crit \cA_{H^1 \oplus H^2:L_1 \times L_2}$$
preserving actions and the grading. We first need to establish a canonical isomorphism $C(\wt\gamma_1) \otimes C(\wt\gamma_2) = C(\wt\gamma)$ when $\wt\gamma_i \in \Crit \cA_{H^i:L_i}$ and $\wt\gamma$ corresponds to $\wt\gamma_1 \times \wt\gamma_2$ via the bijection. This is done as follows. There is an obvious isomorphism of vector bundles
$$\wh\gamma^*T(M_1 \times M_2) = \wh\gamma_1^*TM_1 \oplus \wh\gamma_2^*TM_2$$
which induces an isomorphism of the corresponding Sobolev spaces of sections, thereby producing a Fredholm isomorphism
$$D_{\wh\gamma} = D_{\wh\gamma_1} \oplus D_{\wh\gamma_2}\,,$$
which implies, using the direct sum isomorphism, that
$$\ddd(D_{\wh\gamma}) = \ddd(D_{\wh\gamma_1}) \otimes \ddd(D_{\wh\gamma_2})\,,$$
therefore that we have the desired isomorphism $C(\wt\gamma) = C(\wt\gamma_1) \otimes C(\wt\gamma_2)$.

We also claim that $(H^1 \oplus H^2,J^1 \oplus J^2)$ is a regular Floer datum for $L_1\times L_2$. Moreover, if $u \fc \R \times [0,1] \to M_1 \times M_2$ is a solution of the Floer PDE for this datum, and it has index $1$, and we let $u_i \fc \R \times [0,1] \to M_i$ be its components, then since $D_u = D_{u_1} \oplus D_{u_2}$ and all the operators are surjective, it follows that precisely one of the operators $D_{u_i}$ is an isomorphism, while the other has index $1$. Therefore Floer trajectories of $(H^1 \oplus H^2,J^1 \oplus J^2)$ of index $1$ have the form $u = (u_1,u_2)$ where one of the $u_i$ is $s$-independent, while the other is an index $1$ Floer trajectory for its datum. This, together with the Koszul signs arising from the direct sum isomorphisms as in \cite{Zapolsky_Canonical_ors_HF}, implies that the boundary operator $\partial_{H^1 \oplus H^2,J^1 \oplus J^2}$ is indeed the tensor product of the operators $\partial_{H^i,J^i}$, in the graded sense. This shows that the above isomorphism of modules \eqref{eq:isomorphism_products} is a chain isomorphism, and so we have a canonical map of Floer homologies
$$HF_*(H^1,J^1:L_1) \otimes HF_*(H^2,J^2:L_2) \to HF_*(H^1 \oplus H^2,J^1 \oplus J^2:L_1\times L_2)\,,$$
which is injective in case the ground ring $R$ is a PID, and an isomorphism if it is a field.

We similarly have a map of quantum homologies
$$QH_*(L_1) \otimes QH_*(L_2) \to QH_*(L_1 \times L_2)\,,$$
and the PSS isomorphisms are respected by these, so that we have a commutative diagram

\noindent \resizebox{\textwidth}{!}{
$$\xymatrix{QH_*(L_1) \otimes QH_*(L_2) \ar[r] \ar[d]_{\PSS \otimes \PSS} & QH_*(L_1 \times L_2) \ar[d]^{\PSS} \\ HF_*(H^1,J^1:L_1) \otimes HF_*(H^2,J^2:L_2) \ar[r] & HF_*(H^1 \oplus H^2,J^1 \oplus J^2:L_1\times L_2)}$$
}

\subsection{Action of the symplectomorphism group}\label{subsec:symplectomorphism_group}

If $\psi \in \Symp(M,\omega)$, let $L' = \psi(L)$. In this subsection we construct a canonical chain isomorphism
$$\psi_* \fc \big(CF_*(H:L),\partial_{H,J} \big) \to \big( CF_*(H^\psi:L'),\partial_{H^\psi,J^\psi} \big)\,,$$
where $(H,J)$ is a regular Floer datum for $L$, $(H^\psi,J^\psi)$ is the Floer datum $H^\psi = H \circ \psi^{-1}$, $J^\psi = \psi_*J$, which is regular for $L'$. Likewise, we construct a chain isomorphism
$$\psi \fc QC_*(\cD:L) \to QC_*(\cD^\psi:L')$$
of quantum complexes where $\cD = (f,\rho,I)$ is a regular quantum datum for $L$ and $\cD^\psi = (f^\psi,\rho^\psi,I^\psi)$ is the quantum datum with $f^\psi = f\circ \psi^{-1}$, $\rho^\psi = \psi_*\rho$, $I^\psi = \psi_*I$, which is regular for $L'$.

First, note that there is an obvious bijection
$$\psi_* \fc \Crit \cA_{H:L} \to \Crit \cA_{H^\psi:L'}$$
given by $\wt\gamma = [\gamma,\wh \gamma] \mapsto \wt\gamma^\psi = [\gamma^\psi = \psi \circ \gamma,\wh\gamma^\psi = \psi\circ\wh\gamma]$. It clearly preserves the actions and Conley--Zehnder indices. Then there is a canonical isomorphism of $\Z$-modules, $C(\wt\gamma) \simeq C(\wt\gamma^\psi)$, constructed in the following manner. The symplectomorphism $\psi$ induces a bundle pair isomorphism
$$\psi_* \fc \big(\wh\gamma^*TM,(\wh\gamma|_{\partial \dot D^2})^*TL \big) \to \big((\wh\gamma^\psi)^*TM,(\wh\gamma^\psi|_{\partial \dot D^2})^*TL\big)$$
which in turn induces the commutative diagram of Fredholm operators and Banach isomorphisms
$$\xymatrix{W^{1,p} \big(\dot D^2,\partial\dot D^2;\wh\gamma^*TM,(\wh\gamma|_{\partial \dot D^2})^*TL\big) \ar[r]^-{D_{\wh\gamma}} \ar[d]^{\psi_*} & L^p(\dot D^2;\Omega^{0,1} \otimes \wh\gamma^*TM) \ar[d]^{\psi_*} \\
W^{1,p} \big(\dot D^2,\partial\dot D^2;(\wh\gamma^\psi)^*TM,(\wh\gamma^\psi|_{\partial \dot D^2})^*TL \big) \ar[r]^-{D_{\wh\gamma^\psi}} & L^p \big(\dot D^2;\Omega^{0,1} \otimes (\wh\gamma^\psi)^*TM \big)}$$
It follows that the operators $D_{\wh\gamma}$, $D_{\wh\gamma^\psi}$ are canonically isomorphic and therefore so are their determinant bundles, whose orientations are the generators of $C(\wt\gamma)$, $C(\wt\gamma^\psi)$. The ensuing isomorphism of $\Z$-modules
$$CF_*(H:L) \simeq CF_*(H^\psi:L')$$
is a chain isomorphism. Indeed, let $\wt\gamma_\pm \in \Crit\cA_{H:L}$ and define
$$\wt\cM(H,J;\wt\gamma_-,\wt\gamma_+) \to \wt\cM(H^\psi,J^\psi;\wt\gamma_-^\psi,\wt\gamma_+^\psi)\,,\quad u \mapsto u^\psi = \psi \circ u\,.$$
This map is easily shown to be a diffeomorphism, which also preserves the canonical orientations in case $\wt\gamma_\pm$ have index difference $1$. It is clear that in the latter case the following diagram commutes:
$$\xymatrix{C(\wt\gamma_-) \ar[r]^{C(u)} \ar[d]^{\psi_*} & C(\wt\gamma_+) \ar[d]^{\psi_*} \\ C(\wt\gamma_-^\psi)\ar[r]^{C(u^\psi)} & C(\wt\gamma_+^\psi)}$$
which implies our claim.

The isomorphism in quantum homology is similarly constructed; the details are left to the reader. It is also obvious that $\psi_*$ intertwines the PSS isomorphisms on homology, giving the commutative diagram
$$\xymatrix{HF_*(H,J:L) \ar[r]^{\psi_*} \ar[d]^{\PSS} & HF_*(H^\psi,J^\psi:L') \ar[d]^{\PSS} \\ QH_*(L) \ar[r]^{\psi_*} & QH_*(L')}$$

\section{Lagrangian spectral invariants: definition} \label{sec:definition}

Recall that in order to be able to define Lagrangian quantum and Floer homology of $L$, we need to fix a ground ring $R$, a local subsystem $G < \pi_2^0(M,L)$, and in case $G$ is nontrivial and $R$ has characteristic different from $2$, a relative $\Pin^\pm$ structure, in order to be able to form quotient complexes as described in \S \ref{subsubsec:qt_complexes}. We may also wish to twist the complex by a flat bundle of $R$-modules. We fix these choices once and for all and omit them from notation throughout \S\S \ref{sec:definition}-\ref{sec:main-properties}.

In this section, we define Lagrangian spectral invariants for continuous time-dependent Hamiltonians on $M$, as well as for elements of $\wt\Ham(M,\omega)$. This is done using the following steps, each one performed in its own subsection.
\begin{itemize}
\item[\S \ref{sec:non-degen-hamilt}:] To any regular Floer datum $(H,J)$ for $L$ we associate a function
  \begin{align*}
    \ell(\, \cdot\;; H,J) : QH_*(L) \setminus \{ 0 \} \rightarrow \bb R \,.
  \end{align*}
\item[\S \ref{sec:cont-property}:] We show that for any non-zero $\alpha \in QH_*(L)$ and any regular Floer data $(H^i,J^i)$ for $L$, $i=0,1$ we have
\begin{equation}\label{eq:continuity_Lagr_sp_invts}
\int_0^1 \min_M(H^1 - H^0)\,dt \leq \ell(\alpha;H^1,J^1) - \ell(\alpha;H^0,J^0) \leq \int_0^1 \max_M(H^1 - H^0)\,dt\,.
\end{equation}
\item[\S \ref{sec:invariance-ell}:] It follows that $\ell(\alpha; H,J)$ only depends on $H$, and we let the common value be denoted by $\ell(\alpha; H)$. Also it follows that $\ell$ can be uniquely extended to a function 
  \begin{align*}
    \ell : QH_*(L) \setminus \{ 0 \} \times C^0 \big(M\times \bb [0,1] \big) \rightarrow \bb R \,.
  \end{align*}
\item[\S \ref{sec:invariance-ell2}:] Finally we show that $\ell$ only depends on the homotopy class of the path $\{ \phi_H^t \}_{t\in[0,1]}$ with fixed endpoints, provided $H$ is normalized, so that $\ell$ descends to a function
\begin{align*}
  \ell : QH_*(L) \setminus \{ 0 \} \times \wt \Ham(M,\omega) \rightarrow \bb R \,.
\end{align*}
\end{itemize}

      \subsection{Nondegenerate Hamiltonians}\label{sec:non-degen-hamilt}

Pick a regular Floer datum $(H,J)$ for $L$ and recall that in particular $H$ is supposed to be nondegenerate. There exists a canonical PSS isomorphism \eqref{eq:PSS_isomorphism}
$$ \PSS^{H,J}: QH_*(L) \rightarrow HF_*(H,J:L) \,.$$

The Floer complex can be naturally filtered by the action. Namely, for $a \in \bb R$ define
\begin{align*}
  CF^a_*(H:L) = \bigoplus_{\substack{\wt\gamma \in \Crit \cA_{H:L}\\ \cA_{H:L}(\wt\gamma) < a}}C(\wt\gamma)\,.  
\end{align*}

If the matrix element $C(\wt\gamma_-) \to C(\wt\gamma_+)$ of the boundary operator $\partial_{H,J}$ is non-zero, then there exists a Floer trajectory $u \in \wt \cM(H,J;\wt\gamma_-,\wt\gamma_+)$. Therefore, since the Floer equation is the negative gradient equation for $\cA_{H:L}$ (see Remark \ref{rema:Floer_eq_grad_flow}), we obtain
$$\cA_{H:L}(\wt\gamma_-) - \cA_{H:L}(\wt\gamma_+) > 0\,,$$
because $u$, which connects critical points of $\cA_{H:L}$ of index difference one, necessarily depends on $s$. Thus the action decreases along Floer trajectories, the boundary operator preserves the above filtration, and $CF^a_*(H:L) \subset CF_*(H:L)$ is a subcomplex. We denote by $HF_*^a(H,J:L)$ the homology of $\big( CF_*^a(H:L),\partial_{H,J} \big)$ and let
$$i_*^a \fc HF_*^a(H,J:L) \to HF_*(H,J:L)$$
be the map induced on homology by the inclusion. The Lagrangian spectral invariant corresponding to a non-zero class $\alpha \in QH_*(L)$ is then defined as
\begin{align}
  \label{eq:definition-SI}
  \ell(\alpha;H,J) = \inf \big\{ a \in \bb R : \PSS^{H,J}(\alpha) \in \im(i_*^a) \big\} \,.
\end{align}

Finally, recall from \S \ref{subsec:Lagr_QH} that there exists a canonical class $[L]\neq 0$ in $QH_*(L)$. The spectral invariant associated to this specific class will be of particular interest to us and we will denote it 
\begin{align}  \label{eq:definition-ellplus}
  \ell_+(H,J)=\ell([L];H,J) \,.
\end{align}

We will also need spectral invariants coming from cohomology. We refer the reader to \S \ref{subsec:duality} for relevant definitions. We note that
$$CF^*_a(H:L) := CF^*(H:L) \bigg / \bigoplus_{\substack{\wt\gamma \in \Crit \cA_{H:L} \\ \cA_{\wt\gamma}> a}}C(\wt\gamma)^\vee$$
is a quotient cochain complex. We let $HF^*_a(H,J:L)$ be the corresponding cohomology and we denote by $i_a^* \fc HF^*(H,J:L) \to HF^*_a(H,J:L)$ the map induced on cohomology by the quotient map $CF^*(H:L) \to CF^*_a(H:L)$. Then for $\alpha^\vee \in QH^*(L)$ we can define
\begin{equation}\label{eq:cohomological_sp_invts}
\ell(\alpha^\vee;H) = \sup \big\{a\in \R\,|\, \PSS^{H,J}(\alpha^\vee) \in \ker i^*_a \big\}\,,
\end{equation}
where $\PSS^{H,J} \fc QH^*(L) \to HF^*(H,J:L)$ is the PSS isomorphism on cohomology.

Finally we will use Hamiltonian spectral invariants. If $(H,J)$ is a time-periodic regular Floer datum on $M$, the Floer complex $CF_*(H)$ can similarly be filtered by action to yield a subcomplex $CF_*^a(H)$ generated by critical points of action $< a$. Letting $i_*^a$ be the map induced on homology by the inclusion $CF_*^a(H) \hookrightarrow CF_*(H)$, we define for any non-zero class $b \in QH_*(M)$
$$c(b;H) = \inf \big\{a \in \R\,|\, \PSS^{H,J}(b) \in \im i_*^a \big\}\,.$$

\subsection{Continuity property of $\ell$}  \label{sec:cont-property}

We consider regular Floer data $(H^i,J^i)$, $i=0,1$. In this section we prove the property \eqref{eq:continuity_Lagr_sp_invts} of the invariants defined by \eqref{eq:definition-SI}.
In order to do so, it is enough to prove that, given $\epsilon > 0$, we can pick a regular homotopy of Floer data $(H^s,J^s)_{s\in \R}$ between $(H^0,J^0)$ and $(H^1,J^1)$, which is stationary for $s \notin (0,1)$, such that for any $a \in \R$, the corresponding continuation morphism
$$\Phi_{(H^s,J^s)_s} \fc CF_*(H^0:L) \to CF_*(H^1:L)$$
maps $CF_*^a(H^0:L)$ into $CF_*^{a+b}(H^1:L)$ where $b = \int_0^1 \max_M(H^1_t-H^0_t)\,dt + \epsilon$. Indeed, in this case we obtain the following commutative diagram
$$\xymatrix{HF_*^a(H^0,J^0:L) \ar[d]^{\Phi} \ar[r]^{i_*^a} & HF_*(H^0,J^0:L) \ar[d]^{\Phi} & & \ar[ll]_-{\PSS^{H^0,J^0}} \ar[lld]^-{\PSS^{H^1,J^1}} QH_*(L) \\ HF_*^{a+b}(H^1,J^1:L) \ar[r]^-{i_*^{a+b}} & HF_*(H^1,J^1:L) }$$
which implies
$$\ell(\alpha;H^1,J^1) - \ell(\alpha;H^0,J^0) \leq \int_0^1 \max_M (H^1_t - H^0_t)\,dt + \epsilon\,.$$
Since this is true for any $\epsilon > 0$, we obtain the right inequality in \eqref{eq:continuity_Lagr_sp_invts}. The other inequality is obtained by exchanging the roles of $H^0$ and $H^1$.

To prove that there is such a regular homotopy of Floer data, consider the special homotopy of Hamiltonians given by
$$K^s_t(x) = H^0_t(x) + \beta(s) \big(H^1_t(x) - H^0_t(x)\big)\,,$$
where $\beta \fc \R \to [0,1]$ is a smooth nondecreasing function which satisfies $\beta(s) = 0$ for $s \leq 0$ and $\beta(s) = 1$ for $s \geq 1$. There is a regular homotopy of Floer data $(H^s,J^s)_s$, stationary for $s \notin (0,1)$, such that
$$\max_{(x,t)\in M \times [0,1]} \left(\frac{\partial H^s_t}{\partial s}(x) - \frac{\partial K^s_t}{\partial s}(x)\right) \leq \epsilon\,.$$
Let $\wt\gamma_i \in \Crit \cA_{H^i:L}$ have the same index and let $v \in \cM \big((H^s,J^s)_s;\wt\gamma_0,\wt\gamma_1 \big)$. Recall that $v$ satisfies the parametrized Floer equation $\ol\partial_{(H^s,J^s)_s}u = 0$, see \eqref{eq:paramd_Floer_operator}. There is a natural lift of $v$ to a path in $\wt\Omega_L$ running from $\wt\gamma_0$ to $\wt\gamma_1$; by abuse of notation we denote this path by $v \fc \R \to \wt\Omega_L$. The parametrized Floer equation can equivalently be written as $v'(s) = -\nabla\cA_{H^s} \big(v(s) \big)$. We therefore have

\begin{align*}
\cA_{H^1:L}(\wt\gamma_1) - \cA_{H^0:L}(\wt\gamma_0) &= \int_{-\infty}^\infty \frac{d}{ds} \cA_{H^s:L} \big(v(s)\big)\,ds \\
&= \int_{-\infty}^\infty \left(d_{v(s)} \cA_{H^s:L} \big(v'(s) \big) + \frac{\partial \cA_{H^s:L}}{\partial s}\big(v(s) \big)\right)\,ds \\
&= - \int_{-\infty}^\infty \|\nabla_{v(s)} \cA_{H^s:L}\|^2\,ds \\
&\qquad\qquad + \int_{\R \times [0,1]} \frac{\partial H^s_t}{\partial s}\big(v(s,t)\big)\,ds\,dt \\
&\leq  \int_{\R \times [0,1]} \frac{\partial H^s_t}{\partial s}\big(v(s,t) \big)\,ds\,dt\,.
\end{align*}

\noindent The last term equals
$$\int_{\R \times [0,1]} \frac{\partial K^s_t}{\partial s} \big(v(s,t) \big)\,ds\,dt + \int_{\R \times [0,1]} \left(\frac{\partial H^s_t}{\partial s}\big(v(s,t) \big) - \frac{\partial K^s_t}{\partial s} \big(v(s,t) \big)\right) \,ds\,dt\,.$$
Here in the first integral we have
\begin{align*}
\int_{\R \times [0,1]} \frac{\partial K^s_t}{\partial s} \big(v(s,t) \big)\,ds\,dt &= \int_{\R \times [0,1]} \beta'(s) (H^1_t - H^0_t) \circ v\,ds\,dt\\
&\leq \int_0^1 \max_M (H^1_t - H^0_t)\,dt \int_{-\infty}^\infty \beta'(s)\,ds \\
&= \int_0^1 \max_M (H^1_t - H^0_t)\,dt\,.
\end{align*}
By the choice of $H^s$ we have
$$\int_{\R \times [0,1]} \left(\frac{\partial H^s_t}{\partial s} - \frac{\partial K^s_t}{\partial s}\right)\circ v \,ds\,dt = \int_{[0,1] \times [0,1]} \left(\frac{\partial H^s_t}{\partial s} - \frac{\partial K^s_t}{\partial s}\right)\circ v \,ds\,dt \leq \epsilon\,.$$
All these estimates together imply
$$\cA_{H^1:L}(\wt\gamma_1) - \cA_{H^0:L}(\wt\gamma_0) \leq \int_0^1\max_M (H^1_t - H^0_t)\,dt + \epsilon\,,$$
which means that the homotopy of Floer data $(H^s,J^s)_s$ has the property claimed above.

\subsection{Invariance of $\ell$ (part 1) and arbitrary Hamiltonians}      \label{sec:invariance-ell}

First, by putting $H^0=H^1$ in \eqref{eq:continuity_Lagr_sp_invts}, we immediately deduce that $\ell(\alpha;H,J)$ does not depend on the choice of the almost complex structure $J$. We will therefore denote the resulting common value by $\ell(\alpha;H)$ from now on. Similarly, we use the notation $\ell_+(H)$ for $\ell_+(H,J)$.
      
Again, by \eqref{eq:continuity_Lagr_sp_invts}, we can define the spectral numbers for an arbitrary continuous function $H : M \times [0,1] \rightarrow \bb R$, as follows. Pick a sequence of smooth nondegenerate Hamiltonians $\{H_n\}_n$ satisfying
$$\lim_{n \to \infty} \|H_n - H\|_{C^0(M \times [0,1])} = 0$$
and define $\ell(\alpha;H)$ as the limit $\lim_{n\to\infty}\ell(\alpha;H_n)$. Equation \eqref{eq:continuity_Lagr_sp_invts} ensures that this limit exists and does not depend on the choice of the sequence $\{H_n\}_n$.

\subsection{Invariance of $\ell$ (part 2)}      \label{sec:invariance-ell2}

We now show that $\ell$ only depends on homotopy classes of Hamiltonian isotopies with fixed endpoints, thereby proving Proposition \ref{prop:sp_invts_htpy_class_path_Intro}. Recall that we call a Hamiltonian $H$ normalized if for all $t\in [0,1]$, $\int_M H_t \, \omega^n=0$.

\begin{defin}\label{defi:equivalent-norm-Hamiltonians}
  Two normalized Hamiltonians $H^0$ and $H^1$ are called equivalent if there is a homotopy $(H^s)_{s \in [0,1]}$ between them so that for all $s$, $H_s$ is normalized and $\phi_{H_s}^1=\phi$.
\end{defin}
The universal cover of $\Ham(M,\omega)$ coincides with the set of equivalence classes of normalized Hamiltonians.

\begin{remark}\label{rema:time-rep-SI}
Before proving the invariance property of $\ell$, recall that when dealing with spectral invariants of the Hamiltonian $H_t$ in question, we may assume that it vanishes for $t$ near $0,1$, see for instance \cite{Polterovich_geometry_of_Ham, Monzner_Vichery_Zapolsky_partial_qms_qss_cot_bundles}. From now on all Hamiltonians are assumed to have this property.
\end{remark}

For a smooth Hamiltonian $H$ we let $\Spec(H:L) = \cA_{H:L}(\Crit \cA_{H:L})$ be its \tb{action spectrum}. The following can be proved similarly to the Hamiltonian case (see \cite{Oh_Construction_sp_invts_Ham_paths_closed_symp_mfds}):
\begin{lemma}\label{lem:spectrum_nowheredense}
The action spectrum $\Spec (H:L) \subset \R$ is a closed nowhere dense subset. \qed
\end{lemma}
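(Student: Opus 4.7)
The plan is to adapt Oh's argument from the Hamiltonian case to the Lagrangian setting. Recall that the period group $\omega(\pi_2(M,L)) \subset \R$ equals $\sfA\Z$ when $N_L < \infty$ and is trivial when $N_L = \infty$. Since changing the capping of a Hamiltonian arc $\gamma$ by an element $A \in \pi_2(M,L)$ shifts the action by $-\omega(A)$, the spectrum $\Spec(H:L)$ is invariant under $\sfA\Z$-translation. It will therefore suffice to prove that its image in $\R/\sfA\Z$ (or directly in $\R$ when $\sfA\Z = \{0\}$) is closed and nowhere dense.

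First I would note that the Hamiltonian arcs of $H$ are parametrized by the compact set $K := L \cap (\phi_H^1)^{-1}(L) \subset L$, via $p \mapsto \gamma_p(t) := \phi_H^t(p)$. The crucial second step is to show that along any smooth one-parameter family of critical points of $\cA_{H:L}$ in $\wt\Omega_L$, the action is constant. This follows from the vanishing of $d\cA_{H:L}$ on $\Crit \cA_{H:L}$, and I would verify it by a direct computation: differentiating $\cA_{H:L}([\gamma_s, \wh\gamma_s])$, the variation $\int_0^1 dH_t(\partial_s\gamma_s)\,dt$ equals $\int_0^1 \omega(\partial_s\gamma_s, \dot\gamma_s)\,dt$ by $\dot\gamma_s = X_H(\gamma_s)$, and this cancels exactly the contribution from $\frac{d}{ds}\int \wh\gamma_s^*\omega$, which via Stokes reduces to an asymptotic contribution at the puncture equal to the same integral. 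Here one uses that $\partial_s \wh\gamma_s$ is tangent to $L$ along $\partial D^2$, so that the boundary contribution on $\partial D^2$ vanishes because $L$ is Lagrangian.

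Consequently, the induced continuous map $K \to \R/\sfA\Z$ sending $p$ to the action class of $\gamma_p$ will be locally constant, with image exactly equal to the reduced spectrum. I would then invoke the following general fact: the image of a locally constant continuous map from a compact metric space is closed and nowhere dense. Closedness is automatic from compactness; for nowhere density, if the image contained an open set $U$, then for each $c \in U$ the preimage $f^{-1}(c)$ would be clopen in $K$ (open by local constancy, closed as preimage of a point), yielding an uncountable pairwise disjoint open cover of $K$, which contradicts compactness. Pulling back via the open surjection $\R \to \R/\sfA\Z$ then yields the desired conclusion in $\R$.

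The hard part will be to rigorously justify the local constancy when the critical set is not a smooth submanifold of $\wt\Omega_L$. I would handle this by reducing the question to the finite-dimensional parametrization via $K$: for nearby $p_0, p_1 \in K$, one chooses a continuous family of cappings $\wh\gamma_{p_s}$ interpolating between fixed cappings of $\gamma_{p_0}, \gamma_{p_1}$ (which exists because the set of cappings varies continuously in local charts, forming a torsor under $\pi_2(M,L)$), and the variational computation above applies directly to this family.
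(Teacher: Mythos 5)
Your setup is reasonable: quotienting by the period group $\sfA\Z$, parametrizing the Hamiltonian arcs by the compact set $K = L \cap (\phi_H^1)^{-1}(L)$, deducing closedness from compactness, and the variational computation showing $d\cA_{H:L}$ vanishes along smooth one-parameter families of critical points. The gap is the passage from that computation to the claim that the induced map $K \to \R/\sfA\Z$ is \emph{locally constant}. That implication requires $K$ to be locally path-connected — nearby points of $K$ must be joinable by a path \emph{inside} $K$, since $\gamma_{p_s}$ must have both endpoints on $L$ for $[\gamma_{p_s},\wh\gamma_{p_s}]$ to lie in $\wt\Omega_L$ at all, and $d\cA_{H:L}$ vanishes only at critical points — and this is false for degenerate $H$. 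The degenerate case cannot be avoided here: the lemma is applied in \S\ref{sec:invariance-ell2} to arbitrary normalized smooth Hamiltonians. Note also that if your argument were correct it would prove the much stronger statement that $\Spec(H:L)$ mod $\sfA\Z$ is \emph{finite} (a locally constant map on a compact space has finite image), which is not true in general.

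For a concrete failure of local constancy, work in a Weinstein neighborhood of the equator $L \subset S^2$ identified with a piece of $T^*S^1$, and take $H(\theta,p) = f(\theta)\chi(p)$ with $\chi$ a cutoff equal to $1$ near $p=0$; then $K$ is identified with $\Crit f$, and with the constant capping the action of the chord through $(\theta,0)$ equals $f(\theta)$. Choosing $f$ smooth and monotone with $\{f'=0\}$ a Cantor set $C$ makes $K \cong C$ with $f|_C$ injective, hence nowhere locally constant; your proposed interpolation between $p_0,p_1 \in K$ is then impossible since $C$ is totally disconnected. The argument the paper is referencing from Oh replaces local constancy with Sard's theorem: extend the mod-$\sfA\Z$ action to a smooth function $a$ on a neighborhood of $K$ in $L$ (closing $\gamma_p$ with a short auxiliary arc when $p \notin K$), check that $K \subset \Crit(a)$, and conclude that $a(K)$ is a compact subset of the measure-zero set of critical values of $a$; a closed measure-zero subset of $\R$ is nowhere dense. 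This covers the example above — $f(C)$ is measure zero by Sard even though $f|_C$ is not locally constant — and reduces, in the nondegenerate case where $K$ is finite, to your observation.
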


The proof of the fact that $\ell$ only depends on the equivalence class of a normalized Hamiltonian boils down to the following lemma.
\begin{lemma}\label{lem:invariance-of-spec}
  Let $H$ be normalized. Then $\Spec(H:L)$ only depends on the equivalence class of $H$.
\end{lemma}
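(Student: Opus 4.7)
The plan is to follow Hamiltonian chords (with their cappings) continuously along the homotopy and show that the action is constant along each such family. Fix a homotopy $(H^s)_{s \in [0,1]}$ from $H^0$ to $H^1$ through normalized Hamiltonians all satisfying $\phi_{H^s}^1 = \phi$. Since the time-$1$ map is fixed, the set of starting points $\{x \in L \mid \phi(x) \in L\}$ is $s$-independent, and each such $x$ gives rise to a continuous family of chords $\gamma_s(t) := \phi_{H^s}^t(x)$, all of which lie in the same component of $\Omega_L$ (as they are connected by the homotopy). Given any capping $\wh\gamma_0$ of $\gamma_0$, we lift the continuous path $\{\gamma_s\}$ in $\Omega_L$ to a continuous family of cappings $\wh\gamma_s$ in $\wt\Omega_L$; this sets up a canonical bijection between $\Crit\cA_{H^0:L}$ and $\Crit\cA_{H^1:L}$, and reduces the lemma to showing that for each such family, $s \mapsto \cA_{H^s:L}(\wt\gamma_s)$ is constant.

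I would then compute the $s$-derivative of the action. Differentiating the integral term gives $\int_0^1 [\partial_s H^s_t(\gamma_s) + dH^s_t(\partial_s\gamma_s)]\,dt$. For the area term, applying Cartan's formula to the smooth family of cappings $\wh\gamma_s$, together with Stokes, leaves two boundary contributions: the one along $\partial D^2$ vanishes since the variation is tangent to the Lagrangian $L$ (where $\omega$ restricts to zero), and the contribution from the asymptotic end is $\int_0^1 \omega(\partial_s\gamma_s,\partial_t\gamma_s)\,dt$. Using $\partial_t\gamma_s = X_{H^s_t}(\gamma_s)$ and $\iota_{X_H}\omega = -dH$, this equals $\int_0^1 dH^s_t(\partial_s\gamma_s)\,dt$ and cancels the corresponding term above. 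What survives is
\[
\frac{d}{ds}\cA_{H^s:L}(\wt\gamma_s) = \int_0^1 \partial_s H^s_t(\gamma_s(t))\,dt.
\]

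The main step, and the most delicate, is showing this quantity vanishes. For this I would introduce the normalized Hamiltonian $F^{s,t}$ defined by $X_{F^{s,t}} = \partial_s\phi_{H^s}^t \circ (\phi_{H^s}^t)^{-1}$. Differentiating the flow equation $\partial_t\phi_{H^s}^t = X_{H^s_t}\circ \phi_{H^s}^t$ in $s$ and expressing the result in terms of Hamiltonians yields the standard identity
\[
\partial_s H^s_t \;=\; \partial_t F^{s,t} + \{H^s_t, F^{s,t}\} + c(s,t),
\]
for some constant $c(s,t)$; the normalization of $H^s$ and of $F^{s,t}$, together with $\int_M \{H,F\}\,\omega^n = 0$, forces $c(s,t) \equiv 0$. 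Pulling back to $\gamma_s$, the Poisson term combines with $\partial_t F^{s,t}$ to give exactly $\tfrac{d}{dt}[F^{s,t}(\gamma_s(t))]$, so integration in $t$ reduces the problem to evaluating $F^{s,t}$ at the two endpoints. We have $F^{s,0} \equiv 0$ because $\phi_{H^s}^0 = \id$ for all $s$, and $F^{s,1} \equiv 0$ because $\phi_{H^s}^1 = \phi$ is $s$-independent (so $X_{F^{s,1}} = 0$, making $F^{s,1}$ constant on the connected manifold $M$, hence zero by normalization). Thus the integral vanishes, the action is constant along the family $\wt\gamma_s$, and $\Spec(H^0:L) = \Spec(H^1:L)$.

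I expect the hardest point to be handling the normalization constant $c(s,t)$ in the deformation identity cleanly, which is the bookkeeping that encodes why the equivalence relation is taken through \emph{normalized} Hamiltonians; the Stokes argument on the punctured disk is routine provided one carefully checks that the Lagrangian boundary contribution drops out, and the discreteness vs.\ non-discreteness of $\Crit\cA_{H^s:L}$ is irrelevant since the statement only concerns the image of the action functional.
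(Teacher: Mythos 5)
Your proposal is correct and follows the same high-level scheme as the paper (set up a bijection between critical points by flowing chords and gluing the associated strips onto cappings, differentiate the action in $s$, and show the derivative vanishes), but the crucial vanishing step is handled by a genuinely different argument. Where the paper's Step~1 obtains $\frac{d}{ds}\cA_{H^s:L}(\wt\gamma_s) = \int_0^1 (\partial_s H^s_t)(\gamma_s(t))\,dt$ by the one-line observation that $d_{\wt\gamma_s}\cA_{H^s:L}(\partial_s\wt\gamma_s)=0$ at critical points, you rederive it via a hands-on Cartan/Stokes computation on the capping; this is more explicit but equivalent. The real divergence is in the vanishing argument. The paper's Step~2 packages the double integral $\int_{I^2}(\partial_s H^s_t)(\phi^t_{H^s}(p))\,ds\,dt$ as a critical value of an auxiliary \emph{periodic orbit} action functional $\cA_{K^1}$ at $\wt\delta_1^p$, argues $p$-independence because $p\mapsto\wt\delta_1^p$ is a connected submanifold of $\Crit\cA_{K^1}$, and only then invokes the normalization hypothesis at the very end to conclude that this $p$-independent number is zero. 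You instead show the inner integral $\int_0^1(\partial_s H^s_t)(\gamma_s(t))\,dt$ vanishes for \emph{each} $s$ separately, via the variation Hamiltonian $F^{s,t}$ with $X_{F^{s,t}}=\partial_s\phi^t_{H^s}\circ(\phi^t_{H^s})^{-1}$: the integrand is a total $t$-derivative of $F^{s,t}\circ\gamma_s$, and the boundary terms $F^{s,0}$, $F^{s,1}$ vanish because $\phi^0_{H^s}$ and $\phi^1_{H^s}$ are $s$-independent and $F^{s,t}$ is normalized. Both approaches hinge essentially on normalization; your route is more dynamical and avoids introducing the auxiliary Hamiltonian $K^s=H^s\sharp\overline{H^0}$ and its Floer action functional, at the cost of needing to verify the standard identity for $F^{s,t}$ (equivalently, the explicit formula $F^{s,t}(x)=\int_0^t(\partial_s H^s_\tau)(\phi^\tau_{H^s}(\phi^t_{H^s})^{-1}(x))\,d\tau$ and its automatic normalization). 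The bookkeeping with the constant $c(s,t)$ that you flag is exactly where normalization of $H^s$ forces $c\equiv 0$, and you handle it correctly.
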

\noindent We denote $\Spec(\wt\phi:L)=\Spec(H:L)$ for any normalized representative $H$ of $\wt \phi \in \wt \Ham(M,\omega)$ and call it the action spectrum of $\wt \phi$.

First let us show that the lemma implies the desired invariance property of $\ell$. Indeed, if $(H^s)_s$ is a normalized homotopy so that for all $s \in [0,1]$, $\phi_{H^s}^1 = \phi$, by Lemma \ref{lem:invariance-of-spec} the spectrum of $\m A_{H^s:L}$ does not depend on $s$. This, combined with the fact that the spectrum is nowhere dense (Lemma \ref{lem:spectrum_nowheredense}) and with the \textsc{Spectrality} and \textsc{Continuity} properties of spectral invariants (see Theorem \ref{thm:main_properties_Lagr_sp_invts} below), shows that for every non-zero $\alpha \in QH_*(L)$, $\ell(\alpha;H^0) = \ell(\alpha;H^1)$.

\begin{proof}[Proof of Lemma \ref{lem:invariance-of-spec}]
Let $(H^s)_{s\in[0,1]}$ be a normalized homotopy so that $\phi_{H^s}^1 = \phi$ for all $s$. We need to show that $\Spec(H^0:L)=\Spec(H^1:L)$. Let us define a map
$$\Crit \cA_{H^0:L} \to \Crit \cA_{H^1:L}$$
as follows. Let $\wt\gamma_0 = [\gamma_0,\wh\gamma_0] \in \Crit \cA_{H^0:L}$ and let $u(s,t) = \phi_{H^s}^t \big(\gamma_0(0) \big)$ for $(s,t) \in [0,1]^2$. Define the glued map $\wh\gamma_1 = \wh\gamma_0 \sharp u$, which we view as a capping of the Hamiltonian arc $\gamma_1 = u(1,\cdot)$ of $H^1$. The above bijection then maps $\wt\gamma_0$ to $\wt\gamma_1 = [\gamma_1,\wh\gamma_1]$ by definition.

The proof will be complete once we show that $\cA_{H^0:L}(\wt\gamma_0) = \cA_{H^1:L}(\wt\gamma_1)$.
This is done in two steps.

\bigskip

\noindent \textit{Step 1.} First we show that
\begin{align} \label{eq:step1-invarinace-of-spec}
  \m A_{H^1:L}(\wt\gamma_1) - \m A_{H^0:L}(\wt\gamma_0) =  \int_{I^2} (\del_s H_t^s) \circ u \,ds\,dt 
\end{align}
with $I=[0,1]$. Let $\gamma_s = u(s,\cdot)$, $\wh\gamma_s = \wh\gamma_0 \sharp u|_{[0,s]\times [0,1]}$, and $\wt\gamma_s = [\gamma_s,\wh\gamma_s]$. We have
\begin{align*}
\m A_{H^1:L}(\wt\gamma_1) - \m A_{H^0:L}(\wt\gamma_0) &= \int_0^1 \frac{d}{ds} \cA_{H^s:L}(\wt\gamma_s)\,ds \\
&= \int_0^1 d_{\wt\gamma_s}\cA_{H^s:L}(\partial_s\wt\gamma_s)\,ds + \int_{I^2}(\partial_s H_t^s) \circ u\,ds\,dt\,.
\end{align*}
Since by construction $\wt\gamma_s$ is a critical point of $\cA_{H^s:L}$, the first summand in the last expression vanishes, and we have proved \eqref{eq:step1-invarinace-of-spec}.\\
 
\noindent\textit{Step 2.} We prove that
\begin{equation}\label{eq:integral_H_over_u_p}
\int_{I^2} (\del_s H_t^s) \big(\phi^t_{H^s}(p) \big) \,ds\,dt
\end{equation}
does not depend on $p$.

Consider the time-periodic Hamiltonian $K^s = H^s \sharp \ol{H^0}$. For $p \in M$ let $u_p(s,t) = \phi_{H^s}^t(p)$. Define $\delta_s^p = u_p(s,\cdot) \sharp \ol{u_p(0,\cdot)}$. Then $\delta_s^p$ is a periodic orbit of $K^s$ and the map $\wh\delta_s^p = u_p|_{[0,s]\times[0,1]}$ can be viewed as a capping of $\delta_s^p$. We let $\wt\delta_s^p = [\delta_s^p,\wh\delta_s^p]$ and observe that this is a critical point of $\cA_{K^s}$.
Noting that $\cA_{K^0}(\wt\delta_0^p) = 0$, we have:
\begin{align*}
\cA_{K^1}(\wt\delta_1^p) &= \int_0^1 \frac{d}{ds} \cA_{K^s}(\wt\delta_s^p)\,ds \\
&= \int_0^1 d_{\wt\delta_s^p}\cA_{K^s}(\partial_s\wt\delta_s^p)\,ds + \int_{I^2}(\partial_s H^s_t) \circ u_p \,ds\,dt\,.
\end{align*}
Since $\wt\delta_s^p$ is a critical point of $\cA_{K^s}$, this first summand in the last expression vanishes, and so we obtain that the sought-for integral  \eqref{eq:integral_H_over_u_p} in fact equals the value of the action functional $\cA_{K^1}$ at the critical point $\wt\delta_1^p$.

Note that $M \ni p \mapsto \wt\delta_1^p$ is a smooth embedding of $M$ into the set of critical points of $\cA_{K^1}$. Since a functional is constant on a connected submanifold of the set of its critical points, we conclude that our integral \eqref{eq:integral_H_over_u_p} is indeed independent of $p$.

\bigskip

\noindent\textit{End of the proof.} Using the normalization condition, the fact that $\phi_{H^s}^t$ is a symplectomorphism for all $s,t$, and Steps 1, 2, we obtain finally:
\begin{align*}
0 &= \int_0^1 dt \int_M\big(H_t^1(p) - H_t^0(p)\big) \, \omega^n_p \\
&= \int_{I^2} \bigg(\int_M\partial_sH^s_t(p)\,\omega^n_p \bigg) \, ds \, dt \\
&= \int_{I^2} \bigg(\int_M\partial_sH^s_t \big(\phi_{H^s}^t(p) \big)\,\omega^n_p \bigg) \, ds \, dt \\
&= \int_{I^2} \partial_sH^s_t \big(\phi_{H^s}^t(p) \big)\,ds\,dt \cdot \int_M \omega^n\\
&= \big( \cA_{H^1:L}(\wt\gamma_1) - \cA_{H^0:L}(\wt\gamma_0) \big) \int_M \omega^n\,,
\end{align*}
thereby finishing the proof of Lemma \ref{lem:invariance-of-spec}.
\end{proof}

This concludes the proof of invariance. In view of this property, we have actually defined 
\begin{align*}
  \ell : QH_*(L) \setminus \{ 0 \} \times \wt \Ham(M,\omega) \rightarrow \bb R
\end{align*}
with $\ell(\alpha;\wt\phi) = \ell(\alpha;H)$ for any normalized Hamiltonian $H$ representing $\wt\phi$. We denote $\ell_+(\wt\phi) = \ell([L];\wt\phi)$.

   \section{Spectral invariants: main properties}   \label{sec:main-properties}

We maintain the choices made at the beginning of \S \ref{sec:definition}.

\subsection{Quantum valuation} \label{sec:quantum-valuation}

Here we introduce the valuation in Lagrangian quantum homology. It is analogous to the valuation introduced in \cite{Entov_Polterovich_Calabi_quasimorphism_quantum_homology}, and serves a similar purpose.

Fix a regular quantum datum $\cD$ for $L$. The quantum valuation
$$\nu \fc QH_*(\cD:L) \to \R \cup \{-\infty\}$$
is defined as follows. If $\alpha \in QH_*(\cD:L)$ is non-zero, let $C \in QC_*(\cD:L)$ be a chain representing it. We define the valuation of $C$ to be 
$$\nu(C)=\tfrac{1}{\sfA} \max \big\{ -\omega(A) \,|\, \text{the component of }C \text{ in }C(q,A) \text{ is not }0 \big\}\,.$$
Then we define
$$\nu(\alpha) = \inf \{ \nu(C) \,|\, [C] = \alpha \}$$
and we put $\nu(0)=-\infty$. Proposition \ref{prop:valuation_sp_invts} below will show that $\nu$ equals the spectral invariant of the zero Hamiltonian, which implies that $\nu$ in fact descends to a well-defined function
$$\nu \fc QH_*(L) \to \R \cup \{-\infty\}\,.$$

\begin{remark}
When $N_L = \infty$, it follows from the definitions that $\nu(\alpha) = 0$ for any nonzero $\alpha$.
\end{remark}

\begin{lemma}
If $\alpha \neq 0$, then $\nu(\alpha) \in \R$.
\end{lemma}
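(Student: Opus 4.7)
The plan is to exploit the monotonicity of $L$, which forces the symplectic area $\omega(A)$ of a capping disk to be determined by its Maslov index $\mu(A)$. This rigidity, combined with the fact that the grading of a generator $(q,A) \in C(q,A)$ is $\ind_f q - \mu(A)$, severely restricts the values of $\omega(A)$ that can appear among nonzero components of a chain of fixed degree.

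I will first assume $\alpha \in QH_k(L)$ is homogeneous; the general case reduces to this by decomposing into homogeneous parts, using that $\partial_\cD$ preserves the grading. Let $C \in QC_k(\cD:L)$ be any cycle representing $\alpha$. Each nonzero component of $C$ lies in some $C(q,A)$ with $\ind_f q - \mu(A) = k$. By monotonicity, $\omega(A) = \tau\mu(A) = \tau(\ind_f q - k)$, and since the Morse index on the closed manifold $L$ satisfies $0 \leq \ind_f q \leq n$, we obtain
\[
\tau(k-n) \;\leq\; -\omega(A) \;\leq\; \tau k \,.
\]
Dividing by $\sfA = \tau N_L$ and taking the max over the finitely many nonzero components yields $(k-n)/N_L \leq \nu(C) \leq k/N_L$. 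Since this bound is independent of the chosen representative, taking the infimum over cycle representatives of $\alpha$ gives $\nu(\alpha) \in [(k-n)/N_L,\, k/N_L]$, a bounded interval in $\R$.

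For a general nonzero $\alpha$, decompose $\alpha = \sum_k \alpha_k$ into homogeneous parts and choose some $k_0$ with $\alpha_{k_0} \neq 0$. Any cycle representative $C$ of $\alpha$ splits as $C = \sum_k C_k$ where each $C_k$ is a cycle representing $\alpha_k$; in particular $C_{k_0}$ is a nonzero cycle representing $\alpha_{k_0}$. Then $\nu(C) = \max_k \nu(C_k) \geq \nu(C_{k_0}) \geq (k_0 - n)/N_L$ by the homogeneous case, so $\nu(\alpha) \geq (k_0 - n)/N_L > -\infty$.

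I do not foresee a genuine obstacle: the argument reduces to the monotonicity identity $\omega = \tau\mu$ combined with boundedness of Morse indices on $L$. The only point to keep straight is that the direct sum structure of $QC_*(\cD:L)$ ensures every chain has finite support, so the max defining $\nu(C)$ is a genuine maximum over a nonempty finite set of real numbers.
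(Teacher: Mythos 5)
Your argument is correct and is essentially the paper's own approach: reduce to homogeneous $\alpha$, then use monotonicity to bound $-\omega(A)$ in terms of the degree $k$ and the Morse index $\ind_f q \in [0,n]$. You have merely spelled out the explicit bound $\tau(k-n)\leq -\omega(A)\leq \tau k$ where the paper contents itself with noting that only finitely many values of $-\omega(A)/\sfA$ can occur in degree $k$, and you have likewise written out the reduction to the homogeneous case that the paper delegates to Entov--Polterovich.
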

\begin{proof}
Just like in \cite{Entov_Polterovich_Calabi_quasimorphism_quantum_homology}, we can show that $\nu(\alpha)$ is the maximum of $\nu(\alpha_k)$ where $\alpha_k \in QH_k(\cD:L)$ is the degree-$k$ component of $\alpha$. Therefore it suffices to show that $\nu(\alpha)$ is finite for homogeneous $\alpha$. In this case, due to the monotonicity of $L$, the direct summands $C(q,A)$ of $QC_k(\cD:L)$ have the property that there are only finitely many values that $-\omega(A)/\sfA$ can attain. Therefore any nonzero cycle $C \in QC_k(\cD:L)$ has valuation $\nu(C)$ which is at least the minimal such value. It follows that $\nu(\alpha)$ is also at least this minimal value.
\end{proof}

There is a more natural way to define $\nu$ in the context of this paper. Define the function
$$\cA_0 \fc \big\{(q,A)\,|\, q\in \Crit f\,,A \in \pi_2(M,L,q)\big\} \to \R \quad \text{by} \quad \cA_0(q,A) = -\omega(A)\,.$$
Morally speaking, since quantum homology is the Morse--Bott Floer homology of the zero Hamiltonian, this is the corresponding action functional.

Using $\cA_0$ we can now define a subcomplex $QC_*^a(\cD:L)$ spanned by those generators of $QC_*(\cD:L)$ with $\cA_0 < a$. This is indeed a subcomplex since it follows from the definition of the quantum boundary operator that if its matrix element $C(q,A) \to C(q',A')$ is non-zero, then necessarily $\omega(A') \geq \omega(A)$. We let $QH_*^a(\cD:L)$ be the homology of this subcomplex and let
$$i_Q^a \fc QH_*^a(\cD:L) \to QH_*(\cD:L)$$
be the map induced by the inclusion $QC_*^a(\cD:L) \hookrightarrow QC_*(\cD:L)$. It is then obvious from the definitions that
\begin{align*}
  \nu(\alpha) = \tfrac 1 {\sfA}\inf \big\{ a \in \bb R \,|\, \alpha \in \im (i_Q^a) \big\} \,.
\end{align*}
Intuitively, this means that $\nu(\alpha)\sfA$ is the spectral invariant of the zero Hamiltonian. We now show that this analogy is in fact precise.
\begin{prop}\label{prop:valuation_sp_invts}
We have
$$\int_0^1 \min_M H_t \,dt \leq \ell(\alpha;H) - \nu(\alpha)\sfA \leq \int_0^1 \max_M H_t \,dt \,.$$
In particular $\nu(\alpha)\sfA = \ell(\alpha;0)$ and therefore $\nu$ is independent of the quantum datum $\cD$ used to define it.
\end{prop}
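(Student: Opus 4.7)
My approach is to establish the displayed inequality by directly analyzing the PSS isomorphism as a morphism of filtered complexes, and then to obtain $\nu(\alpha)\sfA = \ell(\alpha;0)$ as the immediate special case $H = 0$ (which pinches $\ell(\alpha;0) - \nu(\alpha)\sfA$ between zero and zero); independence of $\cD$ follows automatically since $\ell(\alpha;0)$ makes no reference to $\cD$. Throughout I use the equivalent description of $\nu$ given just before the proposition, namely $\nu(\alpha)\sfA = \inf\{a \in \R : \alpha \in \im i_Q^a\}$, which places $\nu$ on the same filtered-homology footing as $\ell$.

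For the upper bound I would examine the PSS chain map $\PSS^{H,J} \fc QC_*(\cD:L) \to CF_*(H:L)$, whose defining moduli consists of hybrid configurations---a pearl trajectory attached to a half-cylinder $u$ solving $\overline\partial_{\chi(s)H,J}u = 0$ with cutoff $\chi \fc \R \to [0,1]$ vanishing at the pearl end and equal to $1$ at the Floer end. Viewing $u$ as a negative gradient trajectory of the $s$-dependent functional $\cA_{\chi(s)H:L}$ and integrating $\tfrac{d}{ds}\cA_{\chi(s)H:L}(u(s,\cdot))$ over $s$---with the $\omega$-area of the pearl disks absorbed into the capping of $\wt\gamma$---yields the standard identity
\[
0 \;\leq\; E(u) \;=\; \cA_0(q,A) - \cA_{H:L}(\wt\gamma) + \int_{\R\times[0,1]} \chi'(s)\,H_t\bigl(u(s,t)\bigr)\,ds\,dt.
\]
Since $\chi' \geq 0$ with $\int \chi'\,ds = 1$, this forces $\cA_{H:L}(\wt\gamma) \leq \cA_0(q,A) + \int_0^1 \max_M H_t\,dt$. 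Hence $\PSS^{H,J}$ sends $QC_*^a(\cD:L)$ into $CF_*^{a + \int_0^1 \max_M H_t\,dt}(H:L)$ at the chain level, and infimizing over chain representatives gives $\ell(\alpha;H) \leq \nu(\alpha)\sfA + \int_0^1 \max_M H_t\,dt$. The lower bound is obtained by the symmetric argument applied to the inverse PSS map $\PSS_{H,J}^\cD$, whose cutoff runs from $1$ (Floer end) to $0$ (pearl end), so $\chi' \leq 0$ and $\int \chi'\,ds = -1$; the analogous energy identity yields $\cA_0(q,A) \leq \cA_{H:L}(\wt\gamma) - \int_0^1 \min_M H_t\,dt$, i.e.\ $\nu(\alpha)\sfA \leq \ell(\alpha;H) - \int_0^1 \min_M H_t\,dt$.

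The only non-routine step I anticipate is the action-energy bookkeeping for the hybrid pearl/half-cylinder configurations---in particular, correctly tracking the symplectic areas of the pearl disks as part of the capping of the Floer asymptote $\wt\gamma$, and handling the matching conditions at the junction between the pearl trajectory and the half-cylinder. Once this is in place, the two inequalities follow at once from non-negativity of the energy, and the proposition is complete.
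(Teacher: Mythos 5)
Your proposal is correct and follows essentially the same route as the paper: estimate the action shift of the (inverse) PSS morphism at the chain level via the energy identity for the parametrized Floer half-cylinder, absorb the nonnegative $\omega$-areas of the pearl disks, and specialize to $H=0$ to identify $\nu(\alpha)\sfA = \ell(\alpha;0)$. The only detail you elide is regularity of the naive cutoff homotopy $\chi(s)H$: the paper instead uses a regular homotopy $(H^s,J^s)_s$ whose $s$-derivative is within $\epsilon$ of $\beta'(s)H_t$, obtaining the estimate up to $\epsilon$ and then sending $\epsilon \to 0$; also your $E(u)$ should be read as the total energy of the hybrid configuration (Floer half-cylinder plus pearl disks), which makes your displayed identity exact.
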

\begin{proof}
Fix a regular Floer datum $(H,J)$. We claim that for any given $\epsilon > 0$ we can choose a suitable perturbation datum needed in the defintion of the PSS morphism
$$QC_*(\cD:L) \to CF_*(H:L)$$
so that for any $a \in \R$ it restricts to a map
$$QC_*^a(\cD:L) \to CF_*^{a + b}(H:L)$$
where $b = \int_0^1 \max_M H_t \,dt + \epsilon$. Let us first show how this implies the right inequality of the proposition. It follows that there is a commutative diagram
$$\xymatrix{QH_*^a(\cD:L) \ar[r] \ar[d]^{i_Q^a} & HF_*^{a+b}(H,J:L) \ar[d]^{i_*^{a+b}}\\ QH_*(\cD:L) \ar[r]^-{\PSS_\cD^{H,J}} & HF_*(H,J:L)}$$
The definition of spectral invariants and the valuation then imply that
$$\ell(\alpha;H) \leq \sfA \cdot \nu(\alpha) + \int_0^1 \max_M H_t \,dt + \epsilon\,.$$
Since this holds for any $\epsilon$, we obtain the right inequality above. The other inequality is similarly proved using the inverse PSS morphism.

To prove the assertion, we need to use the definition of the PSS isomorphism described in \cite{Zapolsky_Canonical_ors_HF}. We do not need the full-detail version of the definition, but rather enough of it in order to prove action estimates. We fix $\epsilon > 0$.

Let $q \in \Crit f$, $A \in \pi_2(M,L,q)$, $\wt\gamma \in \Crit \cA_{H:L}$, and assume that the matrix element $C(q,A) \to C(\wt\gamma)$ of the PSS morphism does not vanish. This means that there are maps
$$u_i \fc (D^2,S^1) \to (M,L)\,,\quad i=1,\dots,k\,,\quad \text{and}\quad u_0 \fc (\dot D^2,\partial \dot D^2) \to (M,L)\,,$$
with the following properties. Each $u_i$ for $i>0$ is $I$-holomorphic, there is a gradient trajectory of $f$ running from $q$ to $u_1(-1)$, and gradient trajectories of $f$ running from $u_i(1)$ to $u_{i+1}(-1)$ for $i < k$, and from $u_k(1)$ to $u_0(-1)$. The map $u_0$ satisfies the Floer PDE on $\dot D^2$ where the perturbation datum is chosen as follows. There is a conformal identification $\R \times [0,1] \simeq \dot D^2 \setminus \{-1\}$ given by the same formula as \eqref{eq:std_end} and with respect to the coordinates $(s,t)$ induced on $\dot D^2 \setminus \{-1\}$ via this identification, the perturbation datum takes the form of a homotopy of Floer data $(H^s,J^s)_{s \in \R}$, which is stationary for $s \notin (0,1)$, and such that $(H^s,J^s) = (0,I)$ for $s \leq 0$, $(H^s,J^s) = (H,J)$ for $s \geq 1$, and where moreover there exists a nondecreasing smooth function $\beta \fc \R \to [0,1]$ with $\beta(s) = 0$ for $s \leq 0$ and $\beta(s) = 1$ for $s \geq 1$ such that
\begin{equation}\label{eq:cond_Floer_htpy_valuation}
\max_{(x,t) \in M \times [0,1]}\left(\frac{\partial_sH^s_t}{\partial s}(x) - \beta'(s)H_t(x)\right) \leq \epsilon\,.
\end{equation}
Moreover, by definition the capping $\wh \gamma$ in $\wt\gamma$ has the property that $\wh\gamma \sim A \sharp u_1 \sharp \dots \sharp u_k \sharp u_0$ ($\sim$ being the equivalence relation on the set of cappings, see \S \ref{subsubsec:Lagr_HF}), where the concatenation is performed along the pieces of gradient trajectories of $f$ mentioned above.
We have
\begin{align*}
\cA_{H:L}(\wt\gamma) &= \int_0^1 H_t \big(\gamma(t) \big)\,dt - \int \wh\gamma^*\omega\\
&= \int_0^1 H_t \big(\gamma(t) \big)\,dt - \omega(A) - \sum_{i=1}^k \int u_i^*\omega - \int u_0^*\omega\\
&\leq -\omega(A) + \int_0^1 H_t \big(\gamma(t) \big)\,dt - \int u_0^*\omega\,,
\end{align*}
since the $u_i$ are holomorphic. We claim that
$$\int_0^1 H_t \big(\gamma(t) \big)\,dt - \int u_0^*\omega \leq \int_0^1 \max_M H_t\,dt + \epsilon\,.$$
This follows formally using the same argument as in \S \ref{sec:cont-property}, as we will now demonstrate. Observe that $u_0$, written in the coordinates $(s,t)$ on $\dot D^2 \setminus \{-1\}$, satisfies the parametrized Floer equation for the homotopy of Floer data $(H^s,J^s)_s$. The point $u_0(-1)$ can be considered as a Hamiltonian arc of the zero Hamiltonian, and we can cap it using the constant map at the same point. Since the homotopy $(H^s)_s$ satisfies the condition \eqref{eq:cond_Floer_htpy_valuation}, we have, using the argument in \S \ref{sec:cont-property}:
\begin{multline*}
\int_0^1 H_t \big(\gamma(t) \big)\,dt - \int u_0^*\omega = \cA_{H:L}\big([\gamma,u_0] \big) - \cA_{0:L}\big([u_0(-1),u_0(-1)]\big) \\ \leq \int_0^1 \max_M (H_t - 0) + \epsilon\,,
\end{multline*}
which is precisely what we wanted to show.
\end{proof}

\subsection{Spectral invariants of continuous Hamiltonians}
\label{sec:SI-hamiltonian-funct}

In this section we prove the following theorem, which generalizes Theorem \ref{thm:main_properties_Intro} of the introduction and includes Proposition \ref{prop:module_struct_Intro}.

\begin{theo}\label{thm:main_properties_Lagr_sp_invts}
  Let $L$ be a closed monotone Lagrangian of $(M,\omega)$ with minimal Maslov number $N_L \geq 2$. The function
$$\ell \fc QH_*(L) \times C^0 \big(M\times [0,1] \big) \rightarrow \bb R \cup \{ -\infty \}$$
constructed in \S \ref{sec:definition} satisfies the following properties. 
\begin{Properties} 
   \item[Finiteness] $\ell(\alpha;H) = - \infty$ if and only if $\alpha = 0$.
   \item[Spectrality] For $H \in C^\infty \big(M \times [0,1]\big)$ and $\alpha \neq 0$, $\ell(\alpha;H) \in \Spec(H:L)$.
   \item[Ground ring action] For $r \in R$, $\ell(r \cdot \alpha;H) \leq \ell(\alpha;H)$. In particular, if $r$ is invertible, then $\ell(r \cdot \alpha;H) = \ell(\alpha;H)$.
   \item[Symplectic invariance] Let $\psi \in \Symp(M,\omega)$ and $L'=\psi(L)$. Let
   $$\ell' \fc QH_*(L') \times C^0\big(M\times [0,1] \big) \to \R \cup \{-\infty\}$$
   be the corresponding spectral invariant. Then $\ell(\alpha;H) = \ell'(\psi_*(\alpha);H \circ \psi^{-1})$. 
   \item[Normalization] If $c$ is a function of time then $$\ell(\alpha;H+c)=\ell(\alpha;H) + \int_0^1 c(t) \,dt\,.$$ We have $\ell(\alpha;0) = \nu(\alpha) \sfA$ and $\ell_+(0)=0$.
   \item[Continuity] For any $H$ and $K$, and $\alpha \neq 0$:
   $$\int_0^1 \min_M (K_t - H_t) \,dt \leq \ell(\alpha;K) - \ell(\alpha;H) \leq \int_0^1 \max_M (K_t - H_t) \,dt \,.$$ 
   \item[Monotonicity] If $H \leq K$, then $\ell(\alpha;H) \leq \ell(\alpha;K)$.
   \item[Triangle inequality] For all $\alpha$ and $\beta$, $\ell(\alpha \star \beta; H \sharp K) \leq \ell(\beta;H) + \ell(\alpha;K)$.
   \item[Module structure] Assume that $H^2$ is $1$-periodic. For all $a \in QH_*(M)$ and $\alpha \in QH_*(L)$, $\ell(a \bullet \alpha;H^1 \sharp H^2) \leq c(a;H^2) + \ell(\alpha;H^1)$.   
   \item[Duality] For $\alpha \in QH_*(L)$ let $\alpha^\vee \in QH^{n-*}(L;\cL)$ be the element corresponding to $\alpha$ under the duality isomorphism \eqref{eq:duality_isomorphism_QH}. Then we have
   $$-\ell(\alpha;\ol H) = \ell(\alpha^\vee;H) \leq \inf \big\{\ell(\beta;H)\,|\, \beta \in QH_{n-*}(L;\cL) : \langle \alpha^\vee, \beta\rangle \neq 0 \big\}\,.$$
   In case the ground ring $R$ is a field and the Floer complexes of nondegenerate Hamiltonians are finite-dimensional in every degree, the last inequality is an equality.
\end{Properties}
~\\Recall from \S \ref{subsubsec:qt_complexes} that if $G$ is the kernel of a surjective local system morphism $\ker w_1 \to F$, where $F$ is a group, then $QH_*^G(L)$ is a module over the group ring $R[F]$. We have

\begin{Properties}
    \item[Novikov action] For $A \in F$, we have $\ell(A \cdot \alpha;H) = \ell(\alpha;H) - \omega(A)$. 
    \item[Lagrangian control] If for all $t$, $H_t|_L = c(t) \in \bb R$ (respectively $\leq$, $\geq$), then $$\ell(\alpha;H)=\int_0^1 c(t) \,dt + \nu(\alpha) \sfA\quad  (\text{respectively }\leq, \geq)\,.$$ So that, for all $H$:
$$\int_0^1 \min_L H_t \,dt \leq \ell(\alpha;H) - \nu(\alpha) \sfA \leq \int_0^1 \max_L H_t \,dt \,.$$
~\\Recall that $\ell([L];H)$ is denoted  $\ell_+(H)$ and that we assume $[L] \neq 0$.

   \item[Non-negativity] $\ell_+(H) + \ell_+(\overline{H}) \geq 0$.
   \item[Maximum] $\ell(\alpha;H) \leq \ell_+(H)+\nu(\alpha)\sfA$. 
\end{Properties}
\end{theo}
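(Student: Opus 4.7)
The overall strategy is to establish each property first for smooth nondegenerate Hamiltonians, directly from the definition
\[
\ell(\alpha;H) = \inf\{a \in \R : \PSS^{H,J}(\alpha) \in \im i_*^a\},
\]
and then extend to continuous Hamiltonians via the \textsc{Continuity} property already proved in \S\ref{sec:cont-property}. Many of the properties follow by tracking how the action filtration on $CF_*(H:L)$ interacts with the chain-level structure at hand. Specifically, \textsc{Finiteness} comes from the fact that for nondegenerate $H$ the action spectrum is bounded below; \textsc{Spectrality} is the standard argument combining Lemma \ref{lem:spectrum_nowheredense} with the observation that $\im i_*^a$ can only jump at values of $\cA_{H:L}$; \textsc{Ground ring action} from $R$-linearity of $\PSS$; \textsc{Symplectic invariance} from the canonical action-preserving chain isomorphism of \S\ref{subsec:symplectomorphism_group}; \textsc{Normalization} from the identity $\cA_{H+c}(\wt\gamma) = \cA_H(\wt\gamma) + \int_0^1 c(t)\,dt$ combined with Proposition \ref{prop:valuation_sp_invts}; \textsc{Monotonicity} from \textsc{Continuity}; \textsc{Novikov action} from $\cA_{H:L}(A\cdot \wt\gamma) = \cA_{H:L}(\wt\gamma) - \omega(A)$; and \textsc{Maximum} by specializing \textsc{Triangle} to $\beta=\alpha$, $\alpha=[L]$, $K=0$, using reparametrization invariance $\ell(\alpha; 0\sharp H) = \ell(\alpha;H)$.

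The \textsc{Triangle inequality} and \textsc{Module structure} are the core analytic properties. Both come from action estimates on moduli spaces defined by the Floer PDE on thrice-punctured Riemann surfaces (a disk for the product, a disk with two boundary and one interior puncture for the module action), with a perturbation datum chosen $\epsilon$-close to the natural ``concatenation'' datum assembled from the Hamiltonians attached to the punctures; the computation parallels that of \S\ref{sec:cont-property} and yields chain-level operations mapping $CF_*^a \otimes CF_*^b$ into $CF_*^{a+b+\epsilon}$. Letting $\epsilon \to 0$ gives the desired inequalities. \textsc{Non-negativity} is then \textsc{Triangle} applied to $[L]\star[L]=[L]$, giving $\ell_+(H\sharp\ol H) \leq \ell_+(H) + \ell_+(\ol H)$, together with the observation that $\{\phi_{H\sharp\ol H}^t\}$ is $\gamma\cdot\gamma^{-1}$ for $\gamma = \{\phi_H^t\}$, hence contractible in $\Ham$ rel endpoints through normalized paths, so $[H\sharp\ol H]=[0]$ in $\wt\Ham$ and $\ell_+(H\sharp\ol H)=\ell_+(0)=0$. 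The \textsc{Duality} identity $-\ell(\alpha;\ol H) = \ell(\alpha^\vee;H)$ follows from the chain isomorphism $CF_*(\ol H:L) = CF^{n-*}(H:L;\cL)$ of \S\ref{subsec:duality}, which reverses the action filtration; the pairing inequality is the general chain-level fact that if $\alpha^\vee$ admits a cocycle representative supported in action $>b$ then any cycle $z$ representing $\beta$ with $\langle c, z\rangle \neq 0$ must itself meet the action-$>b$ subcomplex, and in the field and finite-dimensional case non-degeneracy of the pairing gives the reverse inequality.

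The main technical obstacle is \textsc{Lagrangian control}. After reducing by \textsc{Normalization} to the case $H|_L = 0$, one must prove $\ell(\alpha;H) = \nu(\alpha)\sfA$. The plan is to perturb $H$ to a nondegenerate $H_\epsilon = H + \epsilon f$, where $f$ is a Morse function on $L$ extended to $M$ by a cutoff in a Weinstein neighborhood: since $H|_L = 0$ implies $X_H|_L \in TL^\omega = TL$, the flow of $H$ preserves $L$ setwise, and for small $\epsilon$ the Hamiltonian arcs of $H_\epsilon$ in $\Omega_L$ cluster near critical points $q$ of $f$, with actions $\epsilon f(q) + k\sfA$. A Morse-model computation then identifies $\ell(\alpha; H_\epsilon) = \nu(\alpha)\sfA + O(\epsilon)$, and \textsc{Continuity} as $\epsilon\to 0$ gives the equality. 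The inequalities for $H|_L \leq c(t)$ (resp.\ $\geq$) then follow from \textsc{Monotonicity} applied to a sandwich $K^- \leq H \leq K^+$ where $K^\pm|_L = c(t)$, with $K^\pm$ built by a partition of unity near $L$. The delicate analytic point—what makes this the hard step—is to verify that for $\epsilon$ small the Floer generators and the low-energy trajectories contributing to $\PSS^{H_\epsilon,J_\epsilon}(\alpha)$ remain localized in a Weinstein neighborhood of $L$, so that the Morse-theoretic model genuinely computes the spectral invariant; standard monotonicity-of-area arguments in a carefully chosen $J_\epsilon$ should suffice, but this requires care beyond the estimates already present in the paper.
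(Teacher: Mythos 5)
Most of your proposal matches the paper's approach closely: the derivations of \textsc{Finiteness}, \textsc{Spectrality}, \textsc{Ground ring action}, \textsc{Symplectic invariance}, \textsc{Normalization}, \textsc{Monotonicity}, \textsc{Triangle inequality}, \textsc{Module structure}, \textsc{Non-negativity}, \textsc{Duality}, \textsc{Novikov action}, and \textsc{Maximum} all proceed as you describe, and the action estimate on pair-of-pants surfaces via an $\epsilon$-perturbation of the concatenated perturbation datum is precisely the paper's argument.

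The one place where you diverge substantially, and where your argument has a genuine gap, is \textsc{Lagrangian control}. You propose perturbing $H$ to $H_\epsilon = H + \epsilon f$ and localizing the Floer generators and PSS moduli spaces in a Weinstein neighborhood of $L$, and you flag yourself that the localization ``requires care beyond the estimates already present in the paper.'' This localization is indeed nontrivial (one must rule out low-index chords and low-energy PSS solutions escaping the neighborhood), and you do not resolve it. The paper avoids all of this with a much more elementary argument that you should notice is available. Namely, if $H_t|_L = c(t)$ then $dH_t|_{TL} = 0$, so $X_{H_t}|_L$ lies in $(TL)^\omega = TL$ and the flow $\phi_H^t$ preserves $L$; consequently \emph{every} Hamiltonian chord of $sH$ with endpoints on $L$ is entirely contained in $L$ for every $s \in \R$, and such a chord admits a natural capping lying inside $L$ of symplectic area zero. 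Therefore one computes the action spectrum exactly,
\[
\Spec(sH:L) = \Big\{ s\!\int_0^1 c(t)\,dt + k\sfA \ \Big|\ k \in \Z \Big\}\,,
\]
for all $s$. \textsc{Spectrality} and \textsc{Continuity} then force $\ell(\alpha; sH) = s\int_0^1 c(t)\,dt + k_0\sfA$ with $k_0 \in \Z$ independent of $s$; setting $s=0$ and invoking \textsc{Normalization} (that is, Proposition~\ref{prop:valuation_sp_invts}, $\ell(\alpha;0) = \nu(\alpha)\sfA$) identifies $k_0 = \nu(\alpha)$, and $s=1$ gives the equality. The inequality cases then follow from \textsc{Monotonicity} exactly as you say. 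This argument requires no perturbation of $H$, no Weinstein neighborhood, and no control on moduli spaces, so you should replace your localization sketch with it.
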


\subsubsection{Proof of Theorem \ref{thm:main_properties_Lagr_sp_invts}}
\label{sec:proofs}

First, notice that the fact that $\ell(\alpha;H) \in \bb R$ for any $\alpha \neq 0$ and any $H$ easily comes from \textsc{Normalization} and \textsc{Continuity}.\footnote{Alternatively, the finiteness of $\ell(\alpha;H)$ for nondegenerate $H$ can be seen as follows. Just like in the proof of Proposition \ref{prop:valuation_sp_invts}, it is enough to consider the case of homogeneous $\alpha$. The set of actions of critical points of $\cA_{H:L}$ of given Conley--Zehnder index is finite due to the monotonicity of $L$, in particular $\ell(\alpha;H)$ is at least the minimum such action. The assertion follows.} Moreover, we set $\ell(0;H) = -\infty$ for all $H$ so that \textsc{Finiteness} is proven. Notice also that \textsc{Continuity} was proven above for generic smooth Hamiltonians, see \S \ref{sec:cont-property}. It obviously extends to continuous functions, and immediately implies \textsc{Monotonicity}. Finally, notice that \textsc{Ground ring action} is obvious by definition.

\bigskip

We now prove the remaining properties, where we only deal with the nontrivial case of non-zero quantum classes.

\bigskip

   \noindent \textsc{$\bullet$ Spectrality:} The proof is similar to the Hamiltonian case, for instance see \cite{Oh_Construction_sp_invts_Ham_paths_closed_symp_mfds}. Recall that $\omega \big(\pi_2(M,L) \big) = \Z \sfA $ is a discrete subset of $\bb R$. Since we work in a slightly more general situation where we distinguish between homotopy classes of cappings, we include a proof for the sake of completeness.

When $H$ is smooth and nondegenerate, this property follows from the fact that the complement of the spectrum of $H$ is open and dense, since we work in monotone manifolds, and from the fact that the image of the map $i_*^a$ is unchanged as long as $a$ stays in the complement of the spectrum.

When $H$ is degenerate, choose a sequence of nondegenerate Hamiltonians $\{ H_n \}_n$ $C^2$-converging to $H$. By our definition of spectral invariants, $\ell(\alpha;H) = \lim_{n \rightarrow +\infty} \ell(\alpha;H_n)$. By the nondegenerate case, we can choose a sequence $\{(\gamma_n,\wh \gamma_n)\}_n$ so that for all $n$, $\wt \gamma_n = [\gamma_n, \wh\gamma_n] \in \Crit \cA_{H_n:L}$ and $\ell(\alpha;H_n) =  \m A_{H_n}(\wt\gamma_n)$. By $C^2$-convergence and compactness of $M$, the Arzela--Ascoli Theorem ensures the existence of a subsequence of $\{ \gamma_n \}_n$ $C^1$-converging to a Hamiltonian chord $\gamma$ of $H$.

Fix a Riemannian metric on $M$ with respect to which $L$ is totally geodesic and choose a number $\epsilon$ less than its injectivity radius. For $n$ large enough define a strip $u_n \fc [0,1]^2 \to M$ 
such that for fixed $t$, $s \mapsto u_n(s,t)$ is the geodesic arc of length at most $\epsilon$ running from $\gamma_n(t)$ to $\gamma(t)$. Similarly we define the strip $u_{n,n'}$ connecting $\gamma_n$ to $\gamma_{n'}$ for large enough $n,n'$. Now fix $n_0$ large enough and define $\wh\gamma = \wh\gamma_{n_0} \sharp u_{n_0}$. We claim that
$$\cA_{H:L} \big(\wt\gamma = [\gamma,\wh\gamma] \big) = \ell(\alpha;H)\,.$$

Note that $\int u_n^*\omega = \int u_{n'}^*\omega + \int u_{n,n'}^*\omega$. Indeed, by the Stokes formula and due to the fact that $L$ is Lagrangian, we see that the difference between the two terms equals the integral of $d\omega = 0$ over the set realizing a homotopy between the three strips, and therefore it vanishes. Next, we see that $\int u_n^*\omega \to 0$ as $n \to \infty$, as well as $\int u_{n,n'}^*\omega \to 0$ when $n,n' \to \infty$. Also note that $\int \wh\gamma_n^*\omega + \int u_{n,n'}^*\omega = \int \wh\gamma_{n'}^*\omega$. Indeed, by construction the difference between the two sides must be the area of an element of $\pi_2(M,L)$, but on the other hand since $\big\{\int \wh\gamma_n^*\omega\big\}_n$ is Cauchy and by the previous observation, this difference tends to zero, therefore it must be identically zero, owing to the fact that the group of periods of $\omega$ is discrete.

We have
$$\cA_{H:L}(\wt\gamma) - \cA_{H_n:L}(\wt\gamma_n) = \int_0^1 \big(H_t \big(\gamma(t)\big) - H_{n,t}\big(\gamma_n(t) \big)\big)\,dt - \int (\wh\gamma^*\omega - \wh\gamma_n^*\omega)\,.$$
By construction, the first integral tends to zero when $n \to \infty$. In the second integral we have
$$\int (\wh \gamma^* \omega - \wh\gamma_n^* \omega) = \int (\wh\gamma_{n_0}^*\omega - \wh\gamma_n^*\omega) + \int u_{n_0}^*\omega = \int u_{n,n_0}^*\omega + \int u_{n_0}^*\omega = \int u_n^*\omega$$
which tends to zero when $n \to \infty$. Thus we see that
$$\cA_{H:L}(\wt\gamma) = \lim_{n \to \infty}\cA_{H_n:L}(\wt\gamma_n) = \lim_{n\to\infty}\ell(\alpha;H_n) = \ell(\alpha;H)\,.$$

   \noindent \textsc{$\bullet$ Symplectic invariance} First notice that, for $\psi \in \Symp(M,\omega)$, $L'=\psi(L)$ shares the same properties as $L$ so that $\ell'$ is well-defined. It is enough to prove the assertion for nondegenerate $H$. Let therefore $(H,J)$ be a regular Floer datum. In \S \ref{subsec:symplectomorphism_group} it is shown that there exists a canonical chain isomorphism
   $$\psi_* \fc \big(CF_*(H:L),\partial_{H,J} \big) \to \big(CF_*(H^\psi,J^\psi:L'),\partial_{H^\psi,J^\psi} \big)\,,$$
   where $H^\psi = H \circ \psi^{-1}$ and $J^\psi = \psi_*J$. Since $\psi_*$ preserves the actions of the critical points of the respective action functionals, we obtain, for each $a \in \R$, the commutative diagram, where the vertical arrows are inclusions:
   $$\xymatrix{CF_*^a(H:L) \ar[r]^{\psi_*} \ar[d] & CF_*^a(H^\psi:L') \ar[d] \\ CF_*(H:L) \ar[r]^{\psi_*} & CF_*(H^\psi:L') }$$
   It is also shown in \S \ref{subsec:symplectomorphism_group} that $\psi_*$ induces an isomorphism on homology and an isomorphism $\psi_* \fc QH_*(L) \to QH_*(L')$. These are featured in the following diagram:
   $$\xymatrix{HF_*^a(H,J:L) \ar[r]^{i_*^a} \ar[d]^{\psi_*} & HF_*(H,J:L) \ar[d]^{\psi_*} & & QH_*(L) \ar[d]^{\psi_*} \ar[ll]_-{\PSS^{H,J}} \\  HF_*^a(H^\psi,J^\psi:L') \ar[r]^{i_*^a}  & HF_*(H^\psi,J^\psi:L') & & QH_*(L') \ar[ll]_-{\PSS^{H^\psi,J^\psi}}}$$
    from which it is clear that $\ell(\alpha;H) = \ell'(\psi_*(\alpha);H^\psi)$, as claimed.
    
    \bigskip

   \noindent \textsc{$\bullet$ Triangle inequality:} The proof of this fact is rather standard and has been carried out in several context, see \cite{AS10, Monzner_Vichery_Zapolsky_partial_qms_qss_cot_bundles}. By \textsc{Continuity} it suffices to prove this for smooth nondegenerate $H^1$, $H^2$. Recall from \S \ref{subsubsec:Lagr_HF} that we denote by $\Sigma_\star$ the closed disk with three boundary punctures. Pick almost complex structures $J^1$, $J^2$ so that $(H^i,J^i)$, $i=1,2$ are regular Floer data for $L$, and associate them to the negative punctures of $\Sigma_\star$, so that $H^2$ comes before $H^1$ in the cyclic order on $\partial D^2$. Pick $\varepsilon > 0$. By Remark \ref{rema:time-rep-SI} we can assume that $H^1_t$, $H^2_t$ vanish for $t$ close to $0,1$. Let $H^3$ be a nondegenerate Hamiltonian which satisfies $\|H^3 - H^1 \sharp H^2\|_{C^0} \leq \varepsilon$, and let $J^3$ be such that $(H^3,J^3)$ is a regular Floer datum for $L$, and associate it to the positive puncture of $\Sigma_\star$. We will now choose a suitable perturbation datum $(K,I)$ on $\Sigma_\star$. We note that the open set
   $$\Upsilon_\star = \R \times (0,2) \setminus (\infty,0]\times \{1\}$$
   with the conformal structure induced from its inclusion into $\R^2 = \C$ maps biholomorphically onto the interior of $\Sigma_\star$, and we can choose this biholomorphism so that the negative (positive) ends of $\Upsilon_\star$ correspond to the negative (positive) punctures of $\Sigma_\star$. On $\Upsilon_\star$ we put the following perturbation datum $(K,I)$.

The form $K$ is characterized by $K(\partial_s) = 0$ and
$$K(s,t,x)(\partial_t) = \left\{ \begin{array}{ll}  
H^{1}(t,x)  &  \mbox{ if } s\leq 1, \; t \in (0,1), \\ 
H^{2}(t-1,x) &  \mbox{ if } s \leq 1, \; t\in (1,2), \\ 
H^{3}(t,x) &  \mbox{ if } s \geq 2, \; t\in (0,2)  
\end{array}\right.$$
and $|\del_s K(\partial_t)| \leq \varepsilon$ for all $t \in (0,2)$ and all $s \in \bb R$. The family of almost complex structures $I_{(s,t)}$ on $M$ for $s \in \bb R$ and $t \in (0,2)$ is chosen so that $(K,I)$ is regular and
$$I(s,t,x) = \left\{ \begin{array}{ll}  
J^{1}(t,x)  &  \mbox{ if } s\leq 1, \; t \in (0,1), \\ 
J^{2}(t-1,x) &  \mbox{ if } s \leq 1, \; t\in (1,2), \\ 
J^{3}(t,x) &  \mbox{ if } s \geq 2, \; t\in (0,2).  
\end{array}\right.$$
Now we push this perturbation datum on $\Upsilon_\star$ to $\Sigma_\star$ via the aforementioned biholomorphism, and extend the resulting datum on the interior of $\Sigma_\star$ to the whole surface by continuity. Abusing notation we denote this perturbation datum on $\Sigma_\star$ also by $(K,I)$.

Now assume that the matrix element $C(\wt\gamma^2) \otimes C(\wt\gamma^1) \to C(\wt\gamma^3)$ of $\star_{K,I}$ does not vanish, where $\wt\gamma^i \in \Crit \cA_{H^i:L}$. Then there is $u \in \cM(K,I;\{\wt\gamma^i\}_i)$. Computing the energy of $u$ in the coordinates $(s,t)$ on $\Upsilon_\star$ in a manner similar to the computation in \S \ref{sec:cont-property} above leads to 
\begin{align*}
  \m A_{H^1:L}(\wt \gamma^1) + \m A_{H^2:L}(\wt \gamma^2) - \m A_{H^3:L}(\wt \gamma^3) \geq - 2\varepsilon \,.
\end{align*}
This indicates that the restriction of $\star_{K,I}$ to the filtered Floer complexes satisfies:
\begin{align*}
  \star_{K,I} : CF_*^{a_1}(H^1:L) \otimes CF_*^{a_2}(H^2:L) \rightarrow CF_*^{a_1+a_2+2\varepsilon}(H^3:L)
\end{align*}
for any $a_1$, $a_2$. We can deduce the following inequality on spectral invariants:
$$\ell(\alpha \star \beta; H^3) \leq \ell(\beta;H^1) + \ell(\alpha;H^2) + 2\varepsilon\,.$$

Since $\varepsilon$ is arbitrary, we get the expected result 
$$\ell(\alpha \star \beta; H^1\sharp H^2) \leq \ell(\beta;H^1) + \ell(\alpha;H^2)\,.$$

\bigskip

   \noindent \textsc{$\bullet$ Normalization:} For the first assertion, consider the homotopy $H^s$ defined by $H^s=H+sc$ for $s \in [0,1]$. For fixed $s$, $\Spec(H^s:L)$ is $\Spec(H:L)$ translated by $s \int_0^1 c(t) \, dt$. Thus the result holds for smooth $H$ by \textsc{Spectrality} and \textsc{Continuity}. By $C^0$-limit, it also holds for any continuous function. The second assertion follows from Proposition \ref{prop:valuation_sp_invts}. 

   Finally, let us consider the case $\alpha = [L]$. The maximum of a Morse function on $L$ with a single maximum is always a cycle representing $[L]$ (see \S \ref{subsec:Lagr_QH}); this cycle has valuation $0$. It follows from the definition of $\nu$ that $\nu([L]) \leq 0$, that is $\ell_+(0) = \nu([L])\sfA \leq 0$. On the other hand, since $[L] \star [L] = [L]$, \textsc{Triangle inequality} yields
$$\ell([L];0) \leq \ell([L];0) + \ell([L];0)\,,$$
which means $\ell([L];0) \geq 0$, and thus $\ell([L];0)=0$.

\bigskip

   \noindent \textsc{$\bullet$ Module structure:} The proof of this inequality is formally similar to that of \textsc{Triangle inequality}. The only difference is that $H^2$ is time-periodic, and that we map the surface $\Upsilon_\bullet = \Upsilon_\star$ to the surface $\Sigma_\bullet$ so that the complement is the boundary together with a segment connecting the interior puncture to the boundary. The perturbation datum $(K,I)$ on $\Upsilon_\bullet$ is still given by the same formulas as in the proof of \textsc{Triangle inequality} above, and we push it to $\Sigma_\bullet$ and extend the result by continuity. Computing the energy of a solution of the Floer PDE with respect to this perturbation datum implies that the restriction of $\bullet_{K,I}$ to the filtered Floer complexes satisfies 
   $$\bullet_{K,I} \fc CF_*^{a_2}(H^2) \otimes CF_*^{a_1}(H^1:L) \to CF_*^{a_1+a_2+\varepsilon}(H^3:L)\,,$$
   for arbitrary $\varepsilon>0$, from which the desired inequality readily follows.   

   \bigskip

   \noindent \textsc{$\bullet$ Duality:} The first equality is obvious from the definition of homological (see \eqref{eq:definition-SI}) and cohomological (see \eqref{eq:cohomological_sp_invts}) spectral invariants, together with the fact that the duality isomorphism \eqref{eq:Dual_module_isomorphism} flips the sign of the action of the generators.
   
   To prove the inequality we need a general result on filtered based chain complexes and their duals. We present a self-contained account here for the sake of completeness.
   
\begin{defin}\label{def:filtered_chain_cx}
Let $R$ be a ring. A based filtered chain complex over $R$ is a quadruple $\cV = (V,\cB,\cA,\partial)$ where $V$ is a free $R$-module, $\cB$ is a basis for $V$, that is
$$V = \bigoplus_{v\in \cB}R\cdot v\,,$$
$\cA \fc \cB \to \R$ is a function, and $\partial \fc V \to V$ is a boundary operator, that is $\partial^2 = 0$, such that $\langle \partial u,v \rangle \neq 0$ implies $\cA(u) > \cA(v)$ for $u,v \in \cB$.
\end{defin}

Given such a based filtered chain complex, we can define the associated spectral invariants, as follows. For $a \in R$ define
$$V^a = \bigoplus_{\substack{v \in B\\ \cA(v) < a}}R\cdot v\,.$$
It follows that $\partial$ preserves $V^a$, that is $V^a \subset V$ is a subcomplex. We let $i^a \fc H(V^a) \to H(V)$ be the induced map on homology. The spectral invariant of $\alpha \in H(V)$ is
\begin{equation}\label{eq:abstract_sp_invts}
\ell(\alpha,\cV) = \inf\{a \in \R \,|\, \alpha \in \im i^a\}\,.
\end{equation}
\emph{A priori} $\ell(\alpha,\cV)$ may be infinite even if $\alpha \neq 0$, therefore we make the assumption that $\cV$ is such that this does not happen. The above Floer complex of a regular Floer datum is an example.

We define the dual based filtered cochain complex of $\cV$ to be $\cV^\vee = (V^\vee, \cB^\vee, \cA^\vee, \partial^\vee)$. Here $\cB^\vee = \{v^\vee \,|\, v \in \cB\}$, where $v^\vee$ is the functional on $V$ having value $1$ on $v$ and $0$ on the other elements of $\cB$, $V^\vee$ is the subspace of the dual module $\Hom_R(V,R)$ spanned by $\cB^\vee$, $\cA^\vee(v^\vee) = \cA(v)$ for $v \in \cB$ and $\partial^\vee$ is the restriction to $V^\vee$ of the coboundary operator dual to $\partial$.

We can similiraly define spectral invariants of $\cV^\vee$, as follows. For $a \in \R$ let
\begin{equation}\label{eq:abstract_coho_sp_invts}
V^\vee_a = V^\vee \Big / \bigoplus_{\substack{v^\vee \in \cB^\vee \\ \cA^\vee(v^\vee) > a} }R \cdot v^\vee\,.
\end{equation}
Then, since by assumption $\langle \partial^\vee v^\vee, u^\vee \rangle \neq 0$ implies $\cA^\vee(u^\vee) > \cA^\vee(v^\vee)$, we see that $V^\vee_a$ is a quotient cochain complex of $V$. We let $i_a \fc H^\vee(V^\vee) \to H^\vee(V^\vee_a)$ be the induced map on cohomology. The spectral invariant of $\alpha^\vee \in H^\vee(V^\vee)$ is
$$\ell^\vee(\alpha^\vee,\cV^\vee) = \sup \{a \in \R \,|\, \alpha^\vee \in \ker i_a\}\,.$$
Again, we make the assumption that this is a finite number for $\alpha^\vee \neq 0$.

We have the duality pairings
$$\langle \cdot, \cdot \rangle \fc H^\vee(V^\vee) \times H(V) \to R \quad \text{and} \quad \langle \cdot, \cdot \rangle \fc H^\vee(V^\vee_a) \times H(V^a) \to R$$
induced from the duality pairing between $V^\vee$ and $V$. The main result here is the following lemma.
\begin{lemma}
We have, for $\alpha^\vee \in H^\vee(V^\vee)$:
$$\ell^\vee(\alpha^\vee,\cV^\vee) \leq \inf \big\{\ell(\alpha,\cV) \,|\, \alpha \in H(V):\langle \alpha^\vee,\alpha \rangle \neq 0\big\}\,.$$
\end{lemma}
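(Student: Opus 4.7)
The plan is to prove the contrapositive: whenever $\alpha \in H(V)$ satisfies $\langle \alpha^\vee, \alpha \rangle \neq 0$, I will show $\ell^\vee(\alpha^\vee, \cV^\vee) \leq \ell(\alpha, \cV)$; taking the infimum over all such $\alpha$ then yields the lemma. The underlying mechanism is the compatibility of the duality pairing with the two filtrations: a cochain supported on dual generators of action strictly above some level $b$ annihilates a chain supported on generators of action strictly below $b$.

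To carry this out, I would fix arbitrary numbers $b > a > \ell(\alpha, \cV)$. By definition of $\ell(\alpha, \cV)$ there exists a cycle $z \in V^a$, that is $z = \sum_{v \in \cB,\, \cA(v) < a} r_v v$, whose homology class in $V$ equals $\alpha$. Now proceed by contradiction and assume $\alpha^\vee \in \ker i_b$. Pick any cocycle $\phi \in V^\vee$ representing $\alpha^\vee$. The vanishing of the class of $\phi$ in $H^\vee(V^\vee_b)$ means that, after lifting a coboundary primitive from $V^\vee_b$ back to $V^\vee$, we may write $\phi = \delta \tilde\psi + \phi'$ with $\tilde\psi \in V^\vee$ and $\phi' \in \spn\{v^\vee \mid \cA(v) > b\}$; note that this quotient is a genuine cochain complex precisely because $\delta$ strictly raises the action, as recalled just before \eqref{eq:abstract_coho_sp_invts}.

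The key computation is then
\[
\langle \alpha^\vee, \alpha \rangle \;=\; \langle \phi, z \rangle \;=\; \langle \delta\tilde\psi, z \rangle + \langle \phi', z \rangle .
\]
The first summand equals $\langle \tilde\psi, \partial z \rangle = 0$ because $z$ is a cycle. For the second, $\phi'$ is supported on dual generators with $\cA(v) > b$ while $z$ is supported on generators with $\cA(v) < a < b$, so their supports are disjoint under the basis pairing, giving $\langle \phi', z \rangle = 0$. Thus $\langle \alpha^\vee, \alpha \rangle = 0$, contradicting the hypothesis. Hence $\alpha^\vee \notin \ker i_b$, so $\ell^\vee(\alpha^\vee, \cV^\vee) \leq b$; letting $b$ decrease to $\ell(\alpha, \cV)$ delivers the inequality.

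No single step is delicate, but the point most worth monitoring is the mild asymmetry between the definitions of $V^a$ (strict $<$) and of the quotient $V^\vee_a$ (strict $>$). This is precisely why the argument employs two thresholds $a < b$ rather than one: the gap guarantees that the supports of $z$ and $\phi'$ are strictly separated and therefore pair trivially. The hardest aspect of a full write-up would in fact be the promised equality in the field/finite-rank case, which requires a rank count comparing $\im i_*^a$ with the annihilator of $\ker i_a^\vee$ under Poincar\'e duality; the inequality itself, as above, is purely formal.
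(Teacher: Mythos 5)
Your proof is correct and rests on the same mechanism as the paper's: the compatibility of the duality pairing with the two action filtrations. The paper packages this in a commutative diagram at the level of filtered (co)homology groups, pairing $H^\vee(V^\vee_a)$ with $H(V^a)$ and deducing $i_a(\alpha^\vee)\neq 0$ directly; you unwind the same argument at the chain level (picking a cycle $z\in V^a$, a cocycle $\phi$, and computing $\langle\phi,z\rangle$ after modifying $\phi$ by a coboundary and an element of the kernel of the quotient). These are equivalent, and your version is arguably more self-contained. One small simplification: the second threshold $b>a$ is not needed. Since $V^a$ is spanned by generators with $\cA<a$ while the kernel of $V^\vee\to V^\vee_a$ is spanned by dual generators with $\cA>a$ (one strict inequality in each direction), the supports are already disjoint at the single level $a$; this is exactly why the paper can pair $H^\vee(V^\vee_a)$ with $H(V^a)$ directly. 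Your remark that you are ``monitoring the mild asymmetry'' is right in spirit, but the asymmetry is what makes a single threshold work, not what forces two.
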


\begin{proof}
We have the following commutative diagram
$$\xymatrix{H^\vee(V^\vee) \ar[d]^{i_a} \ar[r] & \big(H(V)\big)^\vee \ar[d]^{(i^a)^\vee} \\ H^\vee(V^\vee_a) \ar[r]& \big( H(V^a)\big)^\vee}$$
where the horizontal arrows are induced from the duality pairings whereas $(i^a)^\vee$ is the dual map of $i^a$.

Assume $\alpha \in H(V)$ is such that $\langle \alpha^\vee,\alpha \rangle \neq 0$ and $\ell(\alpha,\cV) \leq a \in \R$. Then there is $\alpha' \in H(V^a)$ with $i^a(\alpha') = \alpha$. We then have, using the above diagram:
$$\langle i_a(\alpha^\vee), \alpha' \rangle = \langle \alpha^\vee, i^a(\alpha')\rangle = \langle \alpha^\vee, \alpha \rangle \neq 0\,,$$
meaning $i_a(\alpha^\vee) \neq 0$, that is $\alpha^\vee \notin \ker i_a$, which implies $a \geq \ell^\vee(\alpha^\vee,\cV^\vee)$. This proves the desired inequality.
\end{proof}
The inequality in \textsc{Duality} property now follows by noting that $CF_*(H:L;\cL)$ is a based filtered chain complex in the sense of Definition \ref{def:filtered_chain_cx}, while the cochain complex $CF^*(H:L;\cL)$ is its dual (up to the sign in the differential, which of course does not matter), and the definitions of spectral invariants \eqref{eq:definition-SI}, \eqref{eq:cohomological_sp_invts} are particular cases of \eqref{eq:abstract_sp_invts}, \eqref{eq:abstract_coho_sp_invts}.

In order to prove the last assertion of this item, namely the one about the equality
$$\ell(\alpha^\vee;H) = \inf \big\{\ell(\beta;H)\,|\,\langle \alpha^\vee,\beta\rangle \neq 0 \big\}\,,$$
assume that the ground ring $R$ is a field and that the Floer complex $CF_*(H:L)$ is finite-dimensional in every degree. This is then a particular case of a general statement about filtered graded chain complexes. We say that a filtered chain complex $\cV$ is graded if there is a function $|\cdot| \fc \cB \to \Z$, called the grading, and we have the decomposition
$$V = \bigoplus_{k \in \Z} V_k \quad \text{where} \quad V_k = \bigoplus_{v \in \cB,|v| = k}R\cdot v\,,$$
and the boundary operator $\partial$ maps $V_k$ into $V_{k-1}$. Note that in this case the dual filtered cochain complex $\cV^\vee$ is also graded, in the sense that there is an analogous decomposition by degree and the differential $\partial^\vee$ raises the degree by $1$.
\begin{lemma}
Let $\cV = (V,\cB,|\cdot|,\cA,\partial)$ be a filtered graded chain complex over a field $R$ and assume it is finite-dimensional in every degree. Then
$$\ell^\vee(\alpha^\vee,\cV^\vee) = \inf \big\{\ell(\alpha,\cV)\,|\,\langle \alpha^\vee,\alpha\rangle \neq 0 \big\}\,.$$
\end{lemma}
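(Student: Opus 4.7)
My plan is to note that the inequality $\ell^\vee(\alpha^\vee,\cV^\vee) \leq \inf\{\ell(\alpha,\cV) : \langle \alpha^\vee, \alpha\rangle \neq 0\}$ was already established by the preceding lemma, so only the reverse inequality requires proof. The key input will be that over a field and under finite-dimensionality in each degree, linear duality is exact, so the dual filtered cochain complex $\cV^\vee$ behaves as the ``true'' dual of $\cV$ both on the chain and on the homology level.

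First I would fix a generic value $a > \ell^\vee(\alpha^\vee,\cV^\vee)$, meaning $a \notin \{\cA(v) : v\in\cB\}$ (a countable set, hence genericity is harmless). For such $a$, the definitions of $V^a$ and $V^\vee_a$ are symmetric: both have a basis indexed by $\{v \in \cB : \cA(v) < a\}$. The natural evaluation pairing then identifies $V^\vee_a$ with the dual cochain complex $(V^a)^\vee$ in each degree, compatibly with differentials. Since $R$ is a field and each $(V^a)_k$ is finite-dimensional (it sits inside $V_k$), taking $R$-duals commutes with passing to (co)homology. Thus the evaluation pairing descends to a perfect pairing
\begin{equation*}
H^\vee(V^\vee_a) \times H(V^a) \to R,
\end{equation*}
and under the induced identification $H^\vee(V^\vee_a) \simeq H(V^a)^\vee$, the map $i_a \fc H^\vee(V^\vee) \to H^\vee(V^\vee_a)$ becomes identified with the algebraic dual $(i^a)^\vee$ of $i^a \fc H(V^a) \to H(V)$.

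Since $a > \ell^\vee(\alpha^\vee,\cV^\vee)$, we have $\alpha^\vee \notin \ker i_a$, i.e.\ $i_a(\alpha^\vee) \neq 0$. Under the identification above this means that $\alpha^\vee$ does not vanish on the subspace $\im i^a \subset H(V)$, so there exists $\alpha' \in H(V^a)$ with $\langle \alpha^\vee, i^a(\alpha')\rangle \neq 0$. Setting $\alpha = i^a(\alpha')\in H(V)$ we obtain $\langle \alpha^\vee,\alpha\rangle \neq 0$ and $\ell(\alpha,\cV) \leq a$, the latter because $\alpha \in \im i^a$. Hence
\begin{equation*}
\inf\big\{\ell(\alpha,\cV) : \langle \alpha^\vee,\alpha\rangle \neq 0\big\} \leq a,
\end{equation*}
and letting $a \searrow \ell^\vee(\alpha^\vee,\cV^\vee)$ along generic values completes the proof.

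I expect the only delicate point to be the slight mismatch in strict versus non-strict inequalities built into the definitions of $V^a$ and $V^\vee_a$, which is why I pass to generic $a$ before invoking duality; once that is handled, the argument is a formal consequence of exactness of $\Hom_R(-,R)$ on finite-dimensional vector spaces together with the commutative diagram relating $i_a$ and $(i^a)^\vee$.
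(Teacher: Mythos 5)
Your proof is correct and follows essentially the same strategy as the paper's: reduce to homogeneous $\alpha^\vee$, fix $a > \ell^\vee(\alpha^\vee,\cV^\vee)$ so that $i_a(\alpha^\vee) \neq 0$, use the finite-dimensionality and the field assumption to identify $H^\vee_k(V^\vee_a)$ with $(H_k(V^a))^\vee$ (so that nonvanishing of $i_a(\alpha^\vee)$ produces a class $\alpha' \in H_k(V^a)$ pairing nontrivially with it), and then set $\alpha = i^a(\alpha')$.

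The one place where you go beyond the paper is your explicit restriction to \emph{generic} $a$, i.e.\ $a \notin \cA(\cB)$. This is not mere caution: the paper asserts outright that the natural map $H^\vee_k(V^\vee_a) \to (H_k(V^a))^\vee$ is an isomorphism, but that statement can fail when $a$ lies in the image of $\cA$, because then $V^\vee_a$ (spanned by $v^\vee$ with $\cA(v) \leq a$) is strictly larger than $(V^a)^\vee$ (dual to the span of $v$ with $\cA(v) < a$); in the simplest example where $\partial = 0$ the map on cohomology literally has a kernel in degree $k$ spanned by the dual basis vectors with $\cA(v) = a$. Passing to generic $a$ makes the two filtered pieces have literally the same basis, so the identification of cochain complexes and hence of cohomologies becomes exact; and since the set of non-generic values is countable, one may approach the infimum through generic values, as you do. So your argument closes a small but real gap in the published proof while otherwise reproducing it.
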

\begin{proof}
Note that it is enough to prove this for homogeneous $\alpha^\vee \in V^\vee_k$. Next, it suffices to show that for any $a \in \R$ with $a > \ell^\vee(\alpha^\vee,\cV^\vee)$ there is $\alpha \in H(V)$ with $\langle \alpha^\vee,\alpha \rangle \neq 0$ such that $\ell(\alpha,\cV) \leq a$. Assume therefore $a > \ell^\vee(\alpha^\vee,\cV^\vee)$, that is $i_a(\alpha^\vee) \neq 0$. The assumptions on $R$ and the degree-wise finite-dimensionality of $V$, and therefore of $V^\vee$, imply that the natural map
$$H^\vee_k(V^\vee_a) \to \big(H_k(V^a)\big)^\vee$$
is an isomorphism.\footnote{In fact, we only use the injectivity of this map.} Thus there is $\alpha' \in H_k(V^a)$ such that $\langle i_a(\alpha^\vee),\alpha' \rangle \neq 0$. Put $\alpha = i^a(\alpha')$ and then we see that
$$\langle \alpha^\vee,\alpha \rangle = \langle i_a(\alpha^\vee),\alpha' \rangle \neq 0\,,$$
and by construction $\ell(\alpha,\cV) \leq a$.
\end{proof}

\bigskip

   \noindent \textsc{$\bullet$ Novikov action:} By the definition of the action of the Novikov ring $R[F]$ on the Lagrangian Floer complex $CF_*(H:L)$, the multiplication by the element $A \in F$ induces a chain isomorphism
   $$CF_*(H:L) \to CF_{*-\mu(A)}(H:L)\,,$$
   which restricts to a chain isomorphism
   $$CF_*^a(H:L) \to CF_{*-\mu(A)}^{a-\omega(A)}(H:L)$$
   for any $a \in \R$. These chain maps induce the two left vertical arrows in the following commutative diagram:
   $$\xymatrix{HF_*^a(H,J:L) \ar[r]^{i^a_*} \ar[d]^-{A} & HF_*(H:L) \ar[d]^-{A} & QH_*(L) \ar[l]_-{\PSS^{H,J}} \ar[d]^-{A} \\ HF_{*-\mu(A)}^{a-\omega(A)}(H,J:L) \ar[r]^{i^{a-\omega(A)}_*} & HF_{*-\mu(A)}(H,J:L) & QH_{*-\mu(A)}(L) \ar[l]_-{\PSS^{H,J}}}$$
   where the right arrow is the action of the Novikov ring on quantum homology. The equality now follows from the definition of spectral invariants.

   \bigskip

   \noindent \textsc{$\bullet$ Lagrangian control:} Notice that it is sufficient to prove this property for smooth Hamiltonians. Assume that $H$ restricted to $L$ is a function of time, $H_t|_L = c(t)$. Then for any $s \in \bb R$ we see that $H^s=sH$ satisfies $H^s_t|_L = sc(t)$. Since $L$ is Lagrangian, for any fixed $s$, the chords of $H^s$ are included in $L$. Each of these comes with a natural capping (itself), included in $L$ so that it has area 0. This shows that for all $s \in \bb R$
$$\Spec(H^s) = \Big\{ s\cdot\! \int_0^1 c(t) \, dt + k \sfA \,\big|\, k \in \bb Z \Big\} \,.$$
Now, by \textsc{Spectrality} and \textsc{Continuity}, $\ell(\alpha;H^s) =  s\int_0^1 c(t) \, dt + k_0 \sfA$ for some $k_0 \in \bb Z$, independent of $s$. Since $H^0=0$, $k_0=\nu(\alpha)$ by \textsc{Normalization}. We get for $s=1$ that $\ell(\alpha;H) =  \nu(\alpha)\sfA +  \int_0^1 c(t) \, dt$.

In case $H$ satisfies $H_t|_L \leq c(t)$ for all $t$, pick any function $K$ such that $K \geq H$ and $K_t|_L = c(t)$. By \textsc{Monotonicity}, $\ell(\alpha;H) \leq \ell(\alpha;K) = \nu(\alpha)\sfA+\int_0^1 c(t) \, dt$. The last case is similar and the bounds on $\ell(\alpha;H)$ follow.

\bigskip

   \noindent \textsc{$\bullet$ Non-negativity:} By \textsc{Triangle inequality} with $\alpha=\beta=[L]$, we have $\ell_+(H) + \ell_+(\overline{H}) \geq \ell_+(H \sharp \overline H)$. By \textsc{Normalization} and the definition of $\overline{H}$ and $\sharp$, it suffices to prove the result when $H$ is normalized. In that case, $H \sharp \overline{H}$ is also normalized and is equivalent to 0, in the sense of Definition \ref{defi:equivalent-norm-Hamiltonians}. By invariance of spectral invariants, see \S \ref{sec:invariance-ell2}, $\ell_+(H \sharp \overline H) = \ell_+(0)$ which vanishes by \textsc{Normalization}.
   
   \bigskip

   \noindent \textsc{$\bullet$ Maximum:} First, assume that $H$ is normalized. Since $\alpha = [L] \star \alpha$ and $H \sharp 0$ is equivalent to $H$ in the sense of Definition \ref{defi:equivalent-norm-Hamiltonians}, by \S \ref{sec:invariance-ell2} and \textsc{Triangle inequality} we obtain $\ell(\alpha;H) \leq \ell_+(H) + \ell(\alpha;0)$. When $H$ is not normalized, the result follows from the first case and \textsc{Normalization}.
   
   \bigskip

Theorem \ref{thm:main_properties_Lagr_sp_invts}, and consequently Theorem \ref{thm:main_properties_Intro} and Proposition \ref{prop:module_struct_Intro}, are now proved.

\subsubsection{The equivalence of Hamiltonian spectral invariants and Lagrangian spectral invariants of the diagonal}\label{subsubsec:equiv_Ham_sp_invt_Lagr_diagonal}

In this subsection we prove Theorem \ref{thm:spectral_invts_diagonal_Intro} of the introduction.

Recall from \S \ref{subsec:Lagr_diagonal} that there is a canonical isomorphism
$$QH_*(\Delta) = QH_*(M)\,.$$
It is enough to show this for a nondegenerate smooth $H$. Let us therefore assume that $(H,J)$ is a regular time-periodic Floer datum on $M$. In view of Remark 
\ref{rema:time-rep-SI}, we can assume that $H_t \equiv 0$ for $t \in [\tfrac 1 2,1]$.

In \S \ref{subsec:Lagr_diagonal} it is shown that there is a regular Floer datum $(\wh H, \wh J)$ for $\Delta$, for which there is a canonical chain isomorphism
$$\big(CF_*(H), \partial_{H,J} \big) = \big(CF_*(\wh H:\Delta), \partial_{\wh H, \wh J} \big)\,,$$
preserving the grading and the action of the generators. Moreover, for any $a \in \R$ we have the following commutative diagram, where the left two vertical arrows are induced by this isomorphism and the right vertical arrow is an analogous isomorphism on quantum homology, described \emph{ibid.}:
$$\xymatrix{HF_*^a(H,J) \ar@{=}[d] \ar[r]^{i_*^a} & HF_*(H,J) \ar@{=}[d] & QH_*(M) \ar[l]_-{\PSS^{H,J}} \ar@{=}[d] \\ HF_*^a(\wh H, \wh J:\Delta) \ar[r]^{i_*^a} & HF_*(\wh H, \wh J:\Delta) & QH_*(\Delta) \ar[l]_-{\PSS^{\wh H,\wh J}}}$$
From this we have the equality of spectral invariants
$$\ell(\alpha;\wh H) = c(\alpha;H)\,.$$
Recall how $\wh H$ is defined: it is the direct sum $\wh H_t = H_t \oplus 0$ for $t\in [0,\tfrac 1 2]$. The Hamiltonian $\wh H$ therefore differs from $H \oplus 0$ by time reparametrization, which means that $\ell(\alpha; H \oplus 0) = \ell(\alpha; \wh H)$, thereby completing the proof of the theorem.

\subsubsection{Spectral invariants of products of Lagrangians}\label{subsubsec:sp_invts_products}

Here we formulate precisely and prove the property of the spectral invariants mentioned at the end of \S \ref{subsec:main_result}.

In \S \ref{subsec:Lagr_HF_QH_products} we introduced, for monotone Lagrangians $L_i \subset (M_i,\omega_i)$, $i = 1,2$, the canonical map
$$QH_*(L_1) \otimes QH_*(L_2) \to QH_*(L_1 \times L_2)\,.$$
\begin{theo}\label{thm:invts_of_products}
For Hamiltonians $H_i$ on $M_i$ and classes $\alpha_i \in QH_*(L_i)$ we have
$$\ell(\alpha_1\otimes \alpha_2; H_1 \oplus H_2) \leq \ell(\alpha_1;H_1) + \ell(\alpha_2;H_2)$$
with equality if $R$ is a field.
\end{theo}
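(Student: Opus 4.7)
\emph{Plan.} I would split the proof into the general upper bound and the reverse inequality when $R$ is a field.

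\emph{Upper bound.} The plan is to use the canonical chain isomorphism from \S \ref{subsec:Lagr_HF_QH_products}. Given regular Floer data $(H^i, J^i)$ for $L_i$, the pair $(H^1 \oplus H^2, J^1 \oplus J^2)$ is a regular Floer datum for $L_1 \times L_2$ and there is a canonical chain isomorphism
$$\Psi \fc CF_*(H^1:L_1) \otimes CF_*(H^2:L_2) \xrightarrow{\sim} CF_*(H^1 \oplus H^2 : L_1 \times L_2)$$
intertwining the PSS maps on homology. I would first observe that $\Psi$ respects action filtrations: the equality $\cA_{H^1 \oplus H^2 : L_1 \times L_2}(\wt\gamma_1 \times \wt\gamma_2) = \cA_{H^1:L_1}(\wt\gamma_1) + \cA_{H^2:L_2}(\wt\gamma_2)$ is immediate from the definition of the action functional, so $\Psi$ sends the subspace generated by tensors $x_1 \otimes x_2$ with $\cA_1(x_1) + \cA_2(x_2) < a$ into $CF_*^a(H^1 \oplus H^2 : L_1 \times L_2)$. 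Given $\ve > 0$, pick chain representatives $x_i$ of $\PSS^{H^i,J^i}(\alpha_i)$ of filtration $< \ell(\alpha_i;H_i) + \ve$; then $\Psi(x_1 \otimes x_2)$ is a cycle representing $\PSS(\alpha_1 \otimes \alpha_2)$ and of filtration $< \ell(\alpha_1;H_1) + \ell(\alpha_2;H_2) + 2\ve$. Sending $\ve \to 0$ yields the upper bound.

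\emph{Lower bound when $R$ is a field.} Here I would argue by duality, bootstrapping from the upper bound just proved and the equality case of the \textsc{Duality} property of Theorem~\ref{thm:main_properties_Lagr_sp_invts}. Using \textsc{Duality},
$$\ell(\alpha_1 \otimes \alpha_2; H_1 \oplus H_2) = -\ell\!\left((\alpha_1 \otimes \alpha_2)^\vee; \ol H_1 \oplus \ol H_2\right),$$
so it suffices to bound $\ell\!\left((\alpha_1 \otimes \alpha_2)^\vee; \ol H_1 \oplus \ol H_2\right) \leq -\ell(\alpha_1;H_1) - \ell(\alpha_2;H_2)$. Given $\ve > 0$, the equality case of \textsc{Duality} applied to each factor provides $\beta_i \in QH_{n_i-*}(L_i;\cL_{L_i})$ with $\langle \alpha_i^\vee, \beta_i \rangle \neq 0$ and $\ell(\beta_i;\ol H_i) \leq \ell(\alpha_i^\vee; \ol H_i) + \ve = -\ell(\alpha_i;H_i) + \ve$. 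The crucial ingredient is the Künneth compatibility of the Poincaré pairing,
$$\langle\cdot,\cdot\rangle_{L_1 \times L_2} = \langle\cdot,\cdot\rangle_{L_1} \otimes \langle\cdot,\cdot\rangle_{L_2},$$
under the natural identification $\cL_{L_1 \times L_2} = \cL_{L_1} \boxtimes \cL_{L_2}$. This implies in particular that $\langle (\alpha_1 \otimes \alpha_2)^\vee, \beta_1 \otimes \beta_2 \rangle \neq 0$, so applying once more the equality case of \textsc{Duality} on the product together with the upper bound for twisted-coefficient classes gives
$$\ell\!\left((\alpha_1 \otimes \alpha_2)^\vee; \ol H_1 \oplus \ol H_2\right) \leq \ell(\beta_1 \otimes \beta_2; \ol H_1 \oplus \ol H_2) \leq \ell(\beta_1;\ol H_1) + \ell(\beta_2;\ol H_2),$$
and letting $\ve \to 0$ closes the argument.

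\emph{Main obstacle.} The main technical point requiring care is the Künneth compatibility of Poincaré duality used above. It reduces to checking that the canonical chain-level isomorphism $\Psi$ commutes with the chain-level duality isomorphism \eqref{eq:Dual_module_isomorphism} on both sides, up to signs coming from the orientation bundles. This is a direct but somewhat tedious computation in the determinant-line/gluing formalism of \cite{Zapolsky_Canonical_ors_HF}, analogous to the verification in \S \ref{subsec:Lagr_HF_QH_products} that $\Psi$ is a chain isomorphism. A secondary check, which I expect to be routine, is that the upper-bound argument of the first part extends verbatim to spectral invariants with values in twisted Floer complexes, which it does because the chain isomorphism $\Psi$ is natural in the flat coefficient bundle.
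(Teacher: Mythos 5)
Your upper bound argument is precisely the paper's: both use the canonical action-filtration-preserving chain isomorphism $\Psi$ of \S\ref{subsec:Lagr_HF_QH_products} to pass from a commutative diagram to the inequality. For the equality over a field, the paper merely cites the method of Entov--Polterovich \cite{Entov_Polterovich_rigid_subsets_sympl_mfds} without spelling anything out, so your duality argument is a genuine contribution of detail. It is sound: using $\ell(\alpha;H) = -\ell(\alpha^\vee;\ol H)$ and the equality case of \textsc{Duality} on each factor (available here because monotonicity forces the Floer complexes to be finitely generated in each degree, so the extra hypothesis is automatic), together with the K\"unneth compatibility of the Poincar\'e pairing, does close the argument. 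Two small remarks. First, when you say ``applying once more the equality case of \textsc{Duality} on the product'', what you actually use there is only the unconditional inequality $\ell((\alpha_1\otimes\alpha_2)^\vee;\ol H_1\oplus\ol H_2)\leq \ell(\beta_1\otimes\beta_2;\ol H_1\oplus\ol H_2)$; the equality case is not needed for the product and invoking it would (needlessly) require verifying the finite-dimensionality hypothesis and field hypothesis for the product complex. Second, you correctly flag the real technical burden: the compatibility $\langle\cdot,\cdot\rangle_{L_1\times L_2}=\langle\cdot,\cdot\rangle_{L_1}\otimes\langle\cdot,\cdot\rangle_{L_2}$ under $\cL_{L_1\times L_2}=\cL_{L_1}\boxtimes\cL_{L_2}$, and the extension of the chain-level K\"unneth isomorphism to twisted coefficients, are not stated in the paper and would have to be checked in the determinant-line formalism of \cite{Zapolsky_Canonical_ors_HF}. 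These are plausible and routine in the sense you describe, but they are not free; a careful write-up would have to carry them out. Overall the route is either identical to or a close cousin of what Entov--Polterovich do, and it is a good instinct to reduce the lower bound to an application of the upper bound on dual classes rather than reproving a structure theorem for filtered complexes over a field.
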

\begin{remark}
We cannot omit the assumption that $R$ is a field in order to obtain an equality in Theorem \ref{thm:invts_of_products}. The reason for this is that even if both classes $\alpha_1$, $\alpha_2$ are nonzero, their tensor product $\alpha_1 \otimes \alpha_2$ in $QH_*(L_1) \otimes QH_*(L_2)$ may vanish.
\end{remark}

\begin{proof}
We have the following commutative diagram

\noindent\resizebox{\textwidth}{!}{
$$\xymatrix{HF_*^{a_1}(H_1,J_1:L_1) \times HF_*^{a_2}(H_2,J_2:L_2) \ar[r] \ar[d]^{i_*^{a_1} \times i_*^{a_2}} & HF_*^{a_1 + a_2}(H_1 \oplus H_2, J_1 \oplus J_2:L_1 \times L_2) \ar[d]^{i_*^{a_1 + a_2}} \\ HF_*(H_1,J_1:L) \times HF_*(H_2,J_2:L) \ar[r] &  HF_*(H_1 \oplus H_2, J_1 \oplus J_2:L_1 \times L_2) \\ QH_*(L_1) \times QH_*(L_2) \ar[r] \ar[u]_{\PSS \times \PSS} & QH_*(L_1 \times L_2) \ar[u]_{\PSS}}$$
}

\noindent which implies the inequality.

In case $R$ is a field, the equality can be proved using the method developed in \cite{Entov_Polterovich_rigid_subsets_sympl_mfds}. 
\end{proof}

\subsection{Spectral invariants of isotopies}
\label{sec:SI-isotopies}

\begin{theo}\label{thm:properties_sp_invts_isotopies}
  Let $L$ be a closed monotone Lagrangian of $(M,\omega)$ with minimal Maslov number $N_L \geq 2$. The function
$$\ell \fc QH_*(L) \times \wt \Ham(M,\omega) \longrightarrow \bb R \cup \{-\infty\}$$
constructed in \S \ref{sec:invariance-ell2} above satisfies the following properties.
\begin{Properties} 
   \item[Finiteness] $\ell(\alpha;\wt\phi) = -\infty$ if and only if $\alpha = 0$.
   \item[Spectrality] For all $\alpha \neq 0$, $\ell(\alpha;\wt \phi) \in \Spec(\wt \phi:L)$.
   \item[Ground ring action] For all $r \in R$, $\ell(r \cdot \alpha;\wt \phi) \leq \ell(\alpha;\wt \phi)$, with equality if $r$ is a unit. 
   \item[Normalization] We have $\ell(\alpha;\id) = \nu(\alpha) \sfA$ and $\ell_+(\id)=0$.
   \item[Continuity] Assume $H$ and $K$ are normalized, then $$\int_0^1 \min_M (H_t - K_t) dt \leq \ell(\alpha;\wt \phi_H) - \ell(\alpha;\wt \phi_{K}) \leq \int_0^1 \max_M (H_t - K_t) dt \,.$$ 

   \item[Monotonicity] If $M$ is noncompact, $H$ and $K$ have compact support, and $H \leq K$, then $\ell(\alpha;\wt \phi_H) \leq \ell(\alpha;\wt \phi_K)$.

   \item[Triangle inequality] For all $\alpha$ and $\beta$, $$\ell(\alpha \star \beta; \wt \phi \wt \psi) \leq \ell(\beta;\wt \phi) + \ell(\alpha ;\wt\psi) \,.$$
   \item[Module structure] For all $a \in QH_*(M)$ and $\alpha \in QH_*(L)$, $\ell(a \bullet \alpha;\wt \phi \wt \psi) \leq c(a;\wt \psi) + \ell(\alpha;\wt \phi)$.

   \item[Lagrangian control] If for all $t$, $H_t|_L = c(t) \in \bb R$ (respectively $\leq$, $\geq$), then
   $$\ell(\alpha;\wt\phi_H)=\nu(\alpha)\sfA+\int_0^1 \left( c(t) - \int_M H_t \, \omega^n \right) dt \quad \text{(respectively }\leq, \geq\text{)}\,.$$
   Thus, for all $H$:
$$\int_0^1 \min_L H_t \,dt \leq \ell(\alpha; \wt \phi_H) - \nu(\alpha)\sfA + \int_0^1 \int_M H_t \, \omega^n \,dt \leq \int_0^1 \max_L H_t \,dt \,.$$

   \item[Non-negativity] $\ell_+(\wt \phi) + \ell_+(\wt \phi^{-1}) \geq 0$.
   \item[Maximum] $\ell(\alpha;\wt \phi) \leq \ell_+(\wt \phi)+\ell(\alpha;\id)$. 
   \item[Duality] For $\alpha \in QH_*(L)$ and $\alpha^\vee \in QH^{n-*}(L;\cL)$ corresponding to it via the duality isomorphism \eqref{eq:duality_isomorphism_QH} we have
   $$-\ell(\alpha;\wt\phi^{-1}) = \ell(\alpha^\vee;\phi) \leq \inf \big\{\ell(\beta;\phi)\,|\,\langle \alpha^\vee,\beta\rangle \neq 0 \big\}$$
   with equality if the ground ring $R$ is a field and the Floer complexes of nondegenerate Hamiltonians are finite-dimensional in each degree.
   \item[Novikov action] In case there is an action of the Novikov ring $R[F]$ as above, we have, for $A \in F$: $\ell(A \alpha;\wt\phi) = \ell(\alpha;\wt\phi) - \omega(A)$.
   \item[Symplectic invariance] Let $\psi \in \Symp (M,\omega)$, $L' = \psi(L)$ and let $\ell'$ be the associated spectral invariant function. Then\footnote{Here we use the natural action of the group $\Symp(M,\omega)$ on $\wt\Ham(M,\omega)$ by conjugation.} $\ell(\alpha;\wt\phi) = \ell'(\psi_*(\alpha);\psi\wt\phi\psi^{-1})$.
\end{Properties}
\end{theo}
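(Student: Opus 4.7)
The plan is to deduce Theorem \ref{thm:properties_sp_invts_isotopies} property by property from the corresponding statements in Theorem \ref{thm:main_properties_Lagr_sp_invts}, using the fundamental identification established at the end of \S\ref{sec:invariance-ell2}: for any $\wt\phi \in \wt\Ham(M,\omega)$, $\ell(\alpha;\wt\phi) = \ell(\alpha;H)$ for any normalized Hamiltonian $H$ whose path of time-$1$ maps represents $\wt\phi$. The whole proof is therefore a matter of choosing normalized representatives and invoking Theorem \ref{thm:main_properties_Lagr_sp_invts}.

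First I would dispose of the properties that transfer essentially verbatim. \textsc{Finiteness}, \textsc{Spectrality} (by the very definition $\Spec(\wt\phi:L)=\Spec(H:L)$ for any normalized representative, together with Lemma \ref{lem:invariance-of-spec}), \textsc{Ground ring action}, \textsc{Normalization} (the case $H=0$, which is normalized), \textsc{Continuity} (stated already for normalized Hamiltonians), \textsc{Novikov action}, and \textsc{Symplectic invariance} each reduce directly. The \textsc{Monotonicity} property is listed only in the noncompact case, where compactly-supported Hamiltonians play the role of normalized ones and the result is immediate from its continuous-Hamiltonian counterpart.

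Next I would handle the composition-type properties: \textsc{Triangle inequality}, \textsc{Module structure}, \textsc{Non-negativity}, \textsc{Maximum}, and \textsc{Duality}. These all rest on two elementary observations: (i) if $H,K$ are normalized and vanish for $t$ near $0,1$ (which we may assume by Remark \ref{rema:time-rep-SI}), then the concatenation $H \sharp K$ is normalized and satisfies $\wt\phi_{H\sharp K} = \wt\phi_H \wt\phi_K$; (ii) if $H$ is normalized, so is $\ol H_t = -H_{1-t}$, and $\wt\phi_{\ol H} = \wt\phi_H^{-1}$. Given these facts, the inequalities for $\wt\phi, \wt\psi$ follow at once from the respective inequalities in Theorem \ref{thm:main_properties_Lagr_sp_invts} applied to normalized representatives. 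The \textsc{Non-negativity} bullet is the previous case combined with the invariance of $\ell$ under equivalence of normalized Hamiltonians (since $H \sharp \ol H$ is equivalent to $0$), already used inside the proof of Theorem \ref{thm:main_properties_Lagr_sp_invts}.

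The only statement requiring a small computation is \textsc{Lagrangian control}, and this will be the main (but still routine) step. Suppose $H_t|_L = c(t)$, and after normalizing the symplectic volume to $\int_M \omega^n = 1$, set $v(t) = \int_M H_t\,\omega^n$ and $\wt H_t = H_t - v(t)$. Then $\wt H$ is normalized and represents $\wt\phi_H$, while $\wt H_t|_L = c(t) - v(t)$. Applying the \textsc{Lagrangian control} property of Theorem \ref{thm:main_properties_Lagr_sp_invts} to $\wt H$ yields
\[
\ell(\alpha;\wt\phi_H) = \ell(\alpha;\wt H) = \nu(\alpha)\sfA + \int_0^1\bigl(c(t) - v(t)\bigr)\,dt\,,
\]
which is the claimed identity. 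The inequality versions are obtained identically by replacing the equality $\wt H_t|_L = c(t)-v(t)$ with $\leq$ or $\geq$ and invoking the corresponding bound from Theorem \ref{thm:main_properties_Lagr_sp_invts}; the two-sided bound at the end of the bullet follows by taking $c(t) = \max_L H_t$ and $c(t) = \min_L H_t$ respectively. This completes the proof.
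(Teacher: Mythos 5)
Your proof is correct and matches the paper's own argument in substance: both reduce every property to its counterpart in Theorem \ref{thm:main_properties_Lagr_sp_invts} by passing to a normalized representative, and both single out \textsc{Lagrangian control} as the one place where the normalization constant $\int_0^1\int_M H_t\,\omega^n\,dt$ needs explicit tracking, handled by the same substitution $\wt H_t = H_t - \int_M H_t\,\omega^n$. (The paper also explicitly flags \textsc{Symplectic invariance} as needing the observation $\psi\wt\phi_H\psi^{-1} = \wt\phi_{H\circ\psi^{-1}}$, which you group with the verbatim reductions rather than spelling out, but this is a minor presentational difference; the observation is the natural unpacking of your "reduce directly.")
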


First notice that we assume $M$ to be noncompact for \textsc{Monotonicity}. This comes from the facts that isotopies naturally correspond to \textit{normalized} Hamiltonians, and that there are no normalized Hamiltonians $H$ and $K$ satisfying $H \leq K$ on a compact manifold except if they coincide everywhere. On noncompact manifolds, isotopies also correspond to normalized Hamiltonians and the natural normalization condition is requiring the Hamiltonian to have compact support.

This theorem follows in a straightforward manner from Theorem \ref{thm:main_properties_Lagr_sp_invts}, with the exception of \textsc{Lagrangian control} and \textsc{Symplectic invariance}. To prove \textsc{Lagrangian control} as stated here it suffices to note that the Hamiltonian $H_t - \int_M H_t\,\omega^n$ is normalized and generates the same Hamiltonian flow as $H_t$, therefore
\begin{align*}
\ell(\alpha;\wt\phi_H) = \ell \big(\alpha;H_t - \textstyle \int_M H_t\,\omega^n\big) = \ell(\alpha;H) - \textstyle \int_0^1\int_M H_t\,\omega^n\,dt\,,
\end{align*}
and the property now follows from \textsc{Lagrangian control} for Hamiltonian functions (Theorem \ref{thm:main_properties_Lagr_sp_invts}).

For \textsc{Symplectic invariance} it suffices to use Theorem \ref{thm:main_properties_Lagr_sp_invts} together with the observation that if $\wt\phi = \wt\phi_H$ then $\psi \wt\phi_H \psi^{-1} = \wt\phi_{H \circ \psi^{-1}}$.

  \section{Proof of the Hofer bounds}
  \label{sec:proof-Hofer-bound}

  We now prove Propositions \ref{prop:sp_invts_htpy_class_path_Intro}, \ref{prop:relation_Lagr_Hofer_distance}. The bounds for $\wt\chi$ follow from the bounds for $H$ if we set $\wt\chi = \wt\phi_H$, therefore we establish the latter.
  
Let $L$ and $L'$ be Hamiltonian isotopic Lagrangians. Choose any $\varphi \in \ham(M,\omega)$ such that $\varphi (L)=L'$ and fix a class $\alpha \in QH_*(L)$.

 
 First, by \textsc{Symplectic invariance}, $\ell'(\alpha' ; H) = \ell(\alpha ; H\circ \varphi )$ with $\alpha=\varphi^{-1}_* (\alpha')$ so that we are interested in $| \ell(\alpha; H) - \ell(\alpha; H\circ \varphi) |$. Since adding the same function of time to both $H$ and $H \circ \varphi$ does not affect their difference, by \textsc{Normalization} we get
  \begin{align*}
    | \ell(\alpha; H) - \ell(\alpha; H\circ \varphi) | = | \ell(\alpha; \wt \phi_H) - \ell(\alpha; \varphi^{-1} \wt \phi_H \varphi) |\,.
  \end{align*}

Now, it is useful to notice that for any Hamiltonian $K \co M \times [0,1] \rightarrow \bb R$ such that $\phi_K = \varphi$, the Hamiltonian\footnote{It is well-known that a smooth one-parameter family of Hamiltonian diffeomorphisms is a Hamiltonian isotopy.} isotopies $t \mapsto \varphi^{-1} \phi_H^t \varphi$ and $t \mapsto (\phi_K^t)^{-1} \phi_H^t \phi_K^t$ are homotopic with fixed endpoints and thus define the same element in $\wt \ham (M,\omega)$, that is $\varphi^{-1}\wt\phi_H \varphi = \wt\phi_K^{-1}\wt\phi_H\wt\phi_K$.


Using this, the fact that the fundamental class $[L]$ is the unit of the quantum homology ring, and \textsc{Triangle inequality}, we get
\begin{align*}
   \ell(\alpha; \varphi^{-1} \wt \phi_H \varphi) =  \ell([L] \star \alpha \star [L]; \wt \phi_K^{-1} \wt \phi_H \wt \phi_K)
\leq \ell_+(\wt \phi_K^{-1}) + \ell(\alpha ; \wt \phi_H) + \ell_+(\wt \phi_K)
\end{align*}
from which we deduce that
$$\ell(\alpha ; \wt \phi_H) -  \ell(\alpha; \varphi^{-1} \wt \phi_H \varphi) \geq - \ell_+(\wt \phi_K^{-1}) - \ell_+(\wt \phi_K)\,.$$
In a similar way, by writing $\ell(\alpha; \wt \phi_H) = \ell(\alpha; \wt \phi_K (\wt \phi_K^{-1} \wt \phi_H \wt \phi_K) \wt \phi_K^{-1})$, we get that
$$\ell(\alpha ; \wt \phi_H) -  \ell(\alpha; \varphi^{-1} \wt \phi_H \varphi) \leq \ell_+(\wt \phi_K^{-1}) + \ell_+(\wt \phi_K)$$
and thus conclude that
\begin{align*}
   | \ell(\alpha; H) - \ell(\alpha; H\circ \varphi) | \leq \ell_+(\wt \phi_K^{-1}) + \ell_+(\wt \phi_K) \,.
\end{align*}
By \textsc{Continuity} of spectral invariants, since $\ell_+(\wt \phi_K) \leq \int_0^1 \max_M K_t \, dt$ and $\ell_+(\wt \phi_K^{-1}) \leq \int_0^1 \max_M \overline K_t \, dt = - \int_0^1 \min_M K_t \, dt$, we deduce
\begin{align*}
   | \ell(\alpha; H) - \ell(\alpha; H\circ \varphi) | \leq  \int_0^1 \osc_M K_t \, dt\,. 
\end{align*}
Taking infimum over all $K$ with $\varphi = \phi_K$ concludes the proof of Proposition \ref{theo:lagrangian-nature}.

Proposition \ref{prop:relation_Lagr_Hofer_distance} also follows from the same estimate, since in order to obtain it we need to take infimum over those $K$ generating the given homotopy class of paths $\wt L$. The left-hand side is independent of $K$, as indicated in \S\ref{sec:Hofer-bound}. By definition, the infimum on the right-hand side equals $\| \wt L \|$.

%

\section{Proofs concerning symplectic rigidity}\label{sec:Superheavy}

We note the following fact.
\begin{remark}\label{rem:superheavy_set_enough_one_side_ineq}
It follows from the definition of a quasi-state that $X$ is superheavy if for any $c \in \R$ and $F \in C^0(M)$ with $F|_X = c$ we have $\zeta(F) \geq c$. Indeed, for such $F$ we have $-F|_X = -c$, therefore $\zeta(-F) \geq -c$, but $\zeta(F) = -\zeta(-F) \leq c$, implying $\zeta(F) = c$.
\end{remark}

\begin{proof}[Proof of Proposition \ref{prop:superheavy_if_not_killed_idem}]
By \textsc{Module Structure}, \textsc{Normalization}, and \textsc{Lagrangian Control} we have, for $F \in C^0(M)$ with $F|_L = c$, and $k \in \N$:
$$c(e;kF) + \ell([L];0) \geq \ell(e\bullet [L];kF) = kc + \nu(e\bullet [L])\sfA\,,$$
which yields $\zeta_e(F) = \lim_{k \to \infty}c(e;kF)/k \geq c$, and by Remark \ref{rem:superheavy_set_enough_one_side_ineq} it follows that $L$ is $e$-superheavy.
\end{proof}

\begin{proof}[Proof of Theorem \ref{thm:superheavy_tori}]
The proof is an application of Proposition \ref{prop:superheavy_if_not_killed_idem}, combined with results appearing in \cite{Zapolsky_Canonical_ors_HF}.
\begin{itemize}
\item For a certain choice of twisted coefficients with $\C$ as the ground ring, $QH_*(L_{\C P^2}) \neq 0$ (\emph{ibid.}). Since $[\C P^2]$ is the unit of $QH_*(\C P^2)$, it follows that $[\C P^2]\bullet [L_{\C P^2}] = [L_{\C P^2}] \neq 0$.
\item Abbreviate $M = S^2 \times S^2$ and $L = L_{S^2 \times S^2}$. For a choice of a Spin structure on $L$ and certain twisted coefficients we have $QH_*(L) \neq 0$  (\emph{ibid.}). It is known that the Lagrangian antidiagonal $\ol\Delta \subset M$ is $e_-$-superheavy. Since $L$ is disjoint from the antidiagonal, it follows that it is not $e_-$-superheavy (see \cite{Entov_Polterovich_rigid_subsets_sympl_mfds}), therefore Proposition \ref{prop:superheavy_if_not_killed_idem} implies $e_-\bullet [L] = 0$. Therefore, since $[M] = e_+ + e_-$, we have
$$e_+ \bullet [L] = e_- \bullet [L] + e_+ \bullet [L] = (e_- + e_+) \bullet [L] = [M] \bullet [L] = [L]\,,$$
and the result follows from another use of Proposition \ref{prop:superheavy_if_not_killed_idem}.
\end{itemize}
\end{proof}

\bibliographystyle{alpha}
\bibliography{biblio}

\end{document}